\newtheorem{theorem}{Theorem}[section]
\newtheorem{corollary}{Corollary}
\newtheorem{criterion}{Criterion}[section]
\newtheorem{definition}{Definition}[section]
\newtheorem{proposition}{Proposition}[section]
\newenvironment{proof}[1][Proof]{\noindent\textbf{#1.} }{\ \rule{0.5em}{0.5em}}
\begin{document}

\title{{\Large AUTOMORPHIC EQUIVALENCE OF THE REPRESENTATIONS OF LIE
ALGEBRAS.}}
\author{{\Large I. Shestakov,} \\
%EndAName
shestak@ime.usp.br \and {\Large A.Tsurkov} \\
%EndAName
arkady.tsurkov@gmail.com\\
Institute of Mathematics and Statistics.\\
University S\~{a}o Paulo. \\
Rua do Mat\~{a}o, 1010 \\
Cidade Universit\'{a}ria \\
S\~{a}o Paulo - SP - Brasil - CEP 05508-090 }
\maketitle

\begin{abstract}
In this paper we research the algebraic geometry of the representations of
Lie algebras over fixed field $k$. We assume that this field is infinite and 
$char\left( k\right) =0$. We consider the representations of Lie algebras as 
$2$-sorted universal algebras. The representations of groups were considered
by similar approach: as $2$-sorted universal algebras - in \cite%
{PlotkinVovsi} and \cite{Repofgr}. The basic notions of the algebraic
geometry of representations of Lie algebras we define similar to the basic
notions of the algebraic geometry of representations of groups (see \cite%
{Repofgr}). We prove that if a field $k$ has not nontrivial automorphisms
then automorphic equivalence of representations of Lie algebras coincide
with geometric equivalence. This result is similar to the result of \cite%
{PlotkinZhitAutCat}, which was achieved for representations of groups. But
we achieve our result by another method: by consideration of $1$-sorted
objects. We suppose that our method can be more perspective in the further
researches.
\end{abstract}

\section{Introduction: representations of Lie algebras as $2$-sorted
universal algebras.\label{Intro}}

\setcounter{equation}{0}

In this paper we research the algebraic geometry of the representations of
Lie algebras.

We consider the Lie algebras over the field $k$. And we say that we have the
representation of Lie algebra $\left( L,V\right) $ if the elements of the
Lie algebra $L$ act on the vector space $V$ over the field $k$ as linear
transformations and the mapping $\mathfrak{f}:L\rightarrow \mathrm{End}%
_{k}\left( V\right) $ which we define by $\mathfrak{f}\left( l\right) \left(
v\right) =l\circ v$, where $l\in L$, $v\in V$, $\circ $ is acting of the
elements of the algebra $L$ over elements of $V$, is a homomorphism from the
Lie algebra $L$ to the Lie algebra $\mathrm{End}_{k}^{\left( -\right)
}\left( V\right) =\mathfrak{end}_{k}\left( V\right) $. Some time we will
omit the symbol $\circ $. In this paper we assume that $k$ is infinite and $%
char\left( k\right) =0$.

We consider a representation of Lie algebra as $2$-sorted universal algebra.
Particularly the homomorphisms of representations we define by this
definition:

\begin{definition}
\label{homomorphisms}We say that we have a \textbf{homomorphism} $\left(
\varphi ,\psi \right) $ from the representation $\left( L_{1},V_{1}\right) $
to the representation $\left( L_{2},V_{2}\right) $ if we have a homomorphism
of Lie algebras $\varphi :L_{1}\rightarrow L_{2}$ and a linear map $\psi
:V_{1}\rightarrow V_{2}$ such that 
\begin{equation}
\varphi \left( l\right) \circ \psi \left( v\right) =\psi \left( l\circ
v\right)  \label{homomorphism_formula}
\end{equation}
holds for every $l\in L_{1}$ and every $v\in V_{1}$.
\end{definition}

We denote $\left( \varphi ,\psi \right) :\left( L_{1},V_{1}\right)
\rightarrow \left( L_{2},V_{2}\right) $.

It means that the field $k$ is fixed in our considerations. But algebras Lie
and theirs modules me can change and we can compare the algebraic geometry
of representations $\left( L_{1},V_{1}\right) $ and $\left(
L_{2},V_{2}\right) $ such that $L_{1}\neq L_{2}$ and $V_{1}\neq V_{2}$.
Therefore the multiplication by scalars of the elements of the algebra Lie $%
L $ and the elements of its module $V$ we can consider as unary operations:
for every scalar $\lambda \in k$ we have two unary operations. But the
acting of the elements of the algebra Lie $L$ over the elements of its
module $V$ we must consider as one binary $2$-sorted operation.

If $\left( \varphi ,\psi \right) :\left( L,V\right) \rightarrow \left(
P,Q\right) $ is a homomorphism of the representations, than $\ker \varphi $
is an ideal of the Lie algebra $L$, $\ker \psi $ is a $L$-submodule of the $%
L $-module $V$, $\left( \ker \varphi ,\ker \psi \right) $ is a
representation and a congruence in $\left( L,V\right) $.

If $H=\left( L,V\right) $ is a representation of Lie algebra and $%
T_{1}\subseteq L$, $T_{2}\subseteq V$ we will denote $\left(
T_{1},T_{2}\right) \subseteq H$. If also $P_{1}\subseteq L$, $P_{2}\subseteq
V$ we will denote $\left( T_{1},T_{2}\right) \cap \left( P_{1},P_{2}\right)
=\left( T_{1}\cap P_{1},T_{2}\cap P_{2}\right) $.

\section{Basic notions of the algebraic geometry of representations of Lie
algebras.}

\setcounter{equation}{0}

We denote by $\Xi $ the variety of the all representations of Lie algebras
over the fixed field $k$.

\begin{definition}
\label{freerep}We say that the representation $\left( L,V\right) $ is a 
\textbf{free representation} with the pair of sets of the free generators $%
\left\{ X,Y\right\} $ if $X\subset L$, $Y\subset V$ and for every
representation $\left( P,U\right) $ every pair of mappings $\varphi
:X\rightarrow P$, $\psi :Y\rightarrow U$ can by extended to homomorphism $%
\left( \varphi ,\psi \right) :\left( L,V\right) \rightarrow \left(
P,U\right) $.
\end{definition}

We will denote this representation by $W=W\left( X,Y\right) $. It is well
known that $W\left( X,Y\right) =\left( L\left( X\right) ,A\left( X\right)
Y\right) $, where $L\left( X\right) =L$ is the free Lie algebra with the set 
$X$ of free generators, $A\left( X\right) $ is the free associative algebra
with unit which has the set $X$ of free generators, $A\left( X\right)
Y=\bigoplus\limits_{y\in Y}A\left( X\right) y=V$ is the free $A\left(
X\right) $ module with the basis $Y$. In this notation the symbol $\circ $
of the action is omitted. In particular, if $X=\varnothing $ then $L\left(
X\right) =\left\{ 0\right\} $, $A\left( X\right) =k$, if $Y=\varnothing $
then $A\left( X\right) Y=\left\{ 0\right\} $, if $X=\left\{ x\right\} $ then 
$L\left( X\right) =kx$, $A\left( X\right) =k\left[ x\right] $.

$X^{0}$, $Y^{0}$ will be infinite countable sets of symbols. We consider the
category $\Xi ^{0}$. $\mathrm{Ob}\Xi ^{0}=\left\{ W\left( X,Y\right) \mid
\left\vert X\right\vert <\infty ,\left\vert Y\right\vert <\infty ,X\subset
X^{0},Y\subset Y^{0}\right\} $. Morphisms of this category are homomorphisms
of its objects. The category $\Xi ^{0}$ is a small category: $\mathrm{Ob}\Xi
^{0}$ and $\mathrm{Mor}\Xi ^{0}$ are sets. So we can tell about elements and
subsets of $\mathrm{Ob}\Xi ^{0}$ and $\mathrm{Mor}\Xi ^{0}$.

We will take our equations from the representations $W=W\left( X,Y\right)
=\left( L\left( X\right) ,A\left( X\right) Y\right) \in \mathrm{Ob}\Xi ^{0}$%
. We have two sorts of equations: the equations in the Lie algebra - $%
t_{1}\in L\left( X\right) $ and the action type equations - $t_{2}\in
A\left( X\right) Y$. We can resolve our equations in arbitrary $H=\left(
L,V\right) \in \Xi $. The homomorphism $\left( \varphi ,\psi \right)
:W\left( X,Y\right) \rightarrow H$ will be the solution of the equation $%
t_{1}\in L\left( X\right) $ if $\varphi \left( t_{1}\right) =0$ and will be
the solution of the equation $t_{2}\in A\left( X\right) Y$ if $\psi \left(
t_{2}\right) =0$.

We can consider the system of equations $T=\left( T_{1},T_{2}\right) $,
where $T_{1}\subseteq L\left( X\right) $, $T_{2}\subseteq A\left( X\right) Y$%
. We can consider this system as a set $T=T_{1}\cup T_{2}$ but it is not
natural because the subsets $T_{1}$ and $T_{2}$ have different origins: $%
T_{1}\subseteq L\left( X\right) $, $T_{2}\subseteq A\left( X\right) Y$. So
it is natural to consider the system of equations $T=\left(
T_{1},T_{2}\right) $ as a pair of sets. However for the sake of brevity we
will some time write "the set $\left( T_{1},T_{2}\right) $". The set of
solutions of the system $\left( T_{1},T_{2}\right) $ in the representation $%
H=\left( L,V\right) $ is%
\begin{equation*}
\left( T_{1},T_{2}\right) _{H}^{\prime }=\left\{ \left( \varphi ,\psi
\right) \in \mathrm{Hom}\left( W\left( X,Y\right) ,H\right) \mid
T_{1}\subseteq \ker \varphi ,T_{2}\subseteq \ker \psi \right\} .
\end{equation*}%
Vice versa, for every set $A\subset $ $\mathrm{Hom}\left( W\left( X,Y\right)
,H\right) $ we can consider the set 
\begin{equation*}
A_{H}^{\prime }=\left( \bigcap\limits_{\left( \varphi ,\psi \right) \in
A}\ker \varphi ,\bigcap\limits_{\left( \varphi ,\psi \right) \in A}\ker \psi
\right) .
\end{equation*}%
This set will be the maximal system of equations, such that $A$ is a subset
of the set of its solutions. Also we can consider the algebraic closer of
the system $\left( T_{1},T_{2}\right) $: 
\begin{equation*}
\left( T_{1},T_{2}\right) _{H}^{\prime \prime }=\left(
\bigcap\limits_{\left( \varphi ,\psi \right) \in \left( T_{1},T_{2}\right)
_{H}^{\prime }}\ker \varphi ,\bigcap\limits_{\left( \varphi ,\psi \right)
\in \left( T_{1},T_{2}\right) _{H}^{\prime }}\ker \psi \right) =
\end{equation*}%
\begin{equation*}
\left( \bigcap\limits_{\left( \varphi ,\psi \right) \in \mathrm{Hom}\left(
W,H\right) ,T_{1}\subseteq \ker \varphi ,T_{2}\subseteq \ker \psi }\ker
\varphi ,\bigcap\limits_{\left( \varphi ,\psi \right) \in \mathrm{Hom}\left(
W,H\right) ,T_{1}\subseteq \ker \varphi ,T_{2}\subseteq \ker \psi }\ker \psi
\right) .
\end{equation*}%
It will be the maximal system of equations which have the same solutions as $%
\left( T_{1},T_{2}\right) $.

It is clear that $\left( T_{1},T_{2}\right) \subseteq \left(
T_{1},T_{2}\right) _{H}^{\prime \prime }$ holds for every $W\left(
X,Y\right) \in \mathrm{Ob}\Xi ^{0}$, every $\left( T_{1},T_{2}\right)
\subseteq W\left( X,Y\right) $ and every $H\in \Xi $.

\begin{definition}
The set $\left( T_{1},T_{2}\right) \subseteq W\left( X,Y\right) $ is $H$%
\textbf{-closed} if $\left( T_{1},T_{2}\right) _{H}^{\prime \prime }=\left(
T_{1},T_{2}\right) $.
\end{definition}

It is clear that the closed sets are congruences. The family of the all $H$%
-closed sets in the free representation $W=W\left( X,Y\right) \in \mathrm{Ob}%
\Xi ^{0}$ we denote by $Cl_{H}\left( W\right) $.

\begin{definition}
$H_{1},H_{2}\in \Xi $. $H_{1},H_{2}$ are called \textbf{geometrically
equivalent} if $\left( T_{1},T_{2}\right) _{H_{1}}^{\prime \prime }=\left(
T_{1},T_{2}\right) _{H_{2}}^{\prime \prime }$ holds for every $W\left(
X,Y\right) \in \mathrm{Ob}\Xi ^{0}$ and every $\left( T_{1},T_{2}\right)
\subseteq W\left( X,Y\right) $.
\end{definition}

We consider $W_{1}=W\left( X_{1},Y_{1}\right) ,W_{2}=W\left(
X_{2},Y_{2}\right) \in \mathrm{Ob}\Xi ^{0}$ and $\left( T_{1},T_{2}\right) $
some congruence in $W_{2}$. We denote by $\beta =\beta _{W_{1},W_{2}}\left(
T_{1},T_{2}\right) $ the following relation in $\mathrm{Hom}\left(
W_{1},W_{2}\right) $: $\left( \left( \varphi _{1},\psi _{1}\right) ,\left(
\varphi _{2},\psi _{2}\right) \right) \in \beta $ if and only if $\varphi
_{1}\left( l\right) \equiv \varphi _{2}\left( l\right) \left( \func{mod}%
T_{1}\right) $ holds for every $l\in L\left( X_{1}\right) $ and $\psi
_{1}\left( v\right) \equiv \psi _{2}\left( v\right) \left( \func{mod}%
T_{2}\right) $ holds for every $v\in A\left( X_{1}\right) Y_{1}$. This
relation is a $2$-sorted analog of the relation $\beta $ from \cite[%
Subsection 3.3]{PlotkinSame}. Now we define as in \cite[Subsection 3.4]%
{PlotkinSame}

\begin{definition}
$H_{1},H_{2}\in \Xi $. $H_{1},H_{2}$ are called \textbf{automorphically
equivalent} if these 3 conditions hold:
\end{definition}

\begin{enumerate}
\item \textit{There exists an automorphism }$\Phi :\Xi ^{0}\rightarrow \Xi
^{0}$\textit{.}

\item \textit{There exists a function }$\alpha =\alpha \left( \Phi \right) $%
\textit{\ such that }$\alpha \left( \Phi \right) _{W}:Cl_{H_{1}}\left(
W\right) \rightarrow Cl_{H_{2}}\left( \Phi \left( W\right) \right) $\textit{%
\ is a bijection for every }$W\in \mathrm{Ob}\Xi ^{0}$\textit{.}

\item $\Phi \left( \beta _{W_{1},W_{2}}\left( T_{1},T_{2}\right) \right)
=\beta _{\Phi \left( W_{1}\right) ,\Phi \left( W_{2}\right) }\left( \alpha
\left( \Phi \right) _{W_{2}}\left( T_{1},T_{2}\right) \right) $\textit{\
holds for every }$W_{1},W_{2}\in \mathrm{Ob}\Xi ^{0}$\textit{, and every }$%
\left( T_{1},T_{2}\right) \in $ $Cl_{H_{1}}\left( W_{2}\right) $.
\end{enumerate}

Here $\Phi \left( \left( \varphi _{1},\psi _{1}\right) ,\left( \varphi
_{2},\psi _{2}\right) \right) =\left( \Phi \left( \varphi _{1},\psi
_{1}\right) ,\Phi \left( \varphi _{2},\psi _{2}\right) \right) $.

It can be proved as in \cite[Proposition 8]{PlotkinSame} that if $H_{1}$ and 
$H_{2}$ are automorphically equivalent then function $\alpha $ is uniquely
determined by automorphism $\Phi $.

\section{Some facts about the closed congruences in the free representations
of Lie algebras.}

\setcounter{equation}{0}

In this Section we assume that $X_{1}\subseteq X_{2}\subset X^{0}$, $%
Y_{1}\subseteq Y_{2}\subset Y^{0}$, $\left( L,V\right) =H\in \Xi $. We
denote $\left( L\left( X_{i}\right) ,A\left( X_{i}\right) Y_{i}\right)
=W\left( X_{i},Y_{i}\right) =W_{i}$, where $i=1,2$.

If $\left( T_{1},T_{2}\right) \subseteq W_{1}$, then, because $%
W_{1}\subseteq W_{2}$, we can consider the sets 
\begin{equation*}
\left( T_{1},T_{2}\right) _{W_{i},H}^{\prime }=\left\{ \left( \varphi ,\psi
\right) :W_{i}\rightarrow H\mid T_{1}\subseteq \ker \varphi ,T_{2}\subseteq
\ker \psi \right\}
\end{equation*}%
and we will denote $\left( \left( T_{1},T_{2}\right) _{W_{i},H}^{\prime
}\right) _{H}^{\prime }=\left( T_{1},T_{2}\right) _{W_{i},H}^{\prime \prime
} $, where $i=1,2$. We say that $\left( T_{1},T_{2}\right) $ is $H$-closed
in $W_{i}$ if $\left( T_{1},T_{2}\right) _{W_{i},H}^{\prime \prime }=\left(
T_{1},T_{2}\right) $. In all other sections of this paper it is clear what
kind of algebraic closer of the system of equations we consider. But in this
Section we must fine distinguish between the different features.

\begin{proposition}
\label{hom_for_cl}We assume that $\left( T_{1},T_{2}\right) \subseteq W_{2}$%
, $\left( \mu ,\nu \right) \in \left( T_{1}\cap L\left( X_{1}\right)
,T_{2}\cap A\left( X_{1}\right) Y_{1}\right) _{W_{1},H}^{\prime }$. We
denote 
\begin{equation*}
\left[ \left( \mu ,\nu \right) \right] =\left\{ \left( \varphi ,\psi \right)
\in \left( T_{1},T_{2}\right) _{H}^{\prime }\mid \varphi _{\mid X_{1}}=\mu
_{\mid X_{1}},\psi _{\mid Y_{1}}=\nu _{\mid Y_{1}}\right\} .
\end{equation*}%
Then 
\begin{equation*}
\left( \left( \bigcap\limits_{\left( \varphi ,\psi \right) \in \left[ \left(
\mu ,\nu \right) \right] }\ker \varphi \right) \cap L\left( X_{1}\right)
,\left( \bigcap\limits_{\left( \varphi ,\psi \right) \in \left[ \left( \mu
,\nu \right) \right] }\ker \psi \right) \cap A\left( X_{1}\right)
Y_{1}\right) =\left( \ker \mu ,\ker \nu \right)
\end{equation*}%
holds.
\end{proposition}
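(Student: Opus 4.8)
The plan is to show that every element of $[(\mu,\nu)]$ restricts on $W_1$ to exactly $(\mu,\nu)$, so that the two components of the claimed identity are forced to agree, and then to secure the single fact on which both inclusions depend, namely that $[(\mu,\nu)]$ is nonempty.

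First I would prove that if $(\varphi,\psi)\in[(\mu,\nu)]$ then $\varphi_{\mid L(X_1)}=\mu$ and $\psi_{\mid A(X_1)Y_1}=\nu$. For the Lie part this is immediate from freeness (Definition \ref{freerep}): $\varphi_{\mid L(X_1)}$ and $\mu$ are homomorphisms $L(X_1)\to L$ that agree on the free generating set $X_1$, hence coincide on all of $L(X_1)$. For the module part, recall that the action of $L$ on $V$ extends to an action of the associative envelope $A$, so that for $a\in A(X_1)$ and $y\in Y_1$ one has $\psi(a\circ y)=\varphi(a)\circ\psi(y)$ and $\nu(a\circ y)=\mu(a)\circ\nu(y)$, where $\varphi,\mu$ also denote the maps induced on $A(X_1)$. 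Since $\varphi_{\mid L(X_1)}=\mu$ forces these induced maps to agree on $A(X_1)$, and since $\psi(y)=\nu(y)$ for $y\in Y_1$, we get $\psi(a\circ y)=\nu(a\circ y)$; as such elements span $A(X_1)Y_1$, this gives $\psi_{\mid A(X_1)Y_1}=\nu$.

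From this restriction fact the computation of both components is routine. For each $(\varphi,\psi)\in[(\mu,\nu)]$ we have $\ker\varphi\cap L(X_1)=\ker(\varphi_{\mid L(X_1)})=\ker\mu$ and likewise $\ker\psi\cap A(X_1)Y_1=\ker\nu$. The inclusion $\supseteq$ is then immediate: for every member of $[(\mu,\nu)]$ we have $\ker\mu\subseteq\ker\varphi$ and $\ker\mu\subseteq L(X_1)$, so $\ker\mu$ lies inside the intersection of the $\ker\varphi$ met with $L(X_1)$, and symmetrically for $\nu$. The inclusion $\subseteq$ follows by fixing any single $(\varphi_0,\psi_0)\in[(\mu,\nu)]$: the full intersection is contained in $\ker\varphi_0$, whence its trace on $L(X_1)$ is contained in $\ker\varphi_0\cap L(X_1)=\ker\mu$, and again symmetrically for the module component.

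The step I expect to be the main obstacle is the nonemptiness of $[(\mu,\nu)]$, on which both inclusions silently rest: if $[(\mu,\nu)]$ were empty the intersection would be taken over the empty family and would collapse to all of $L(X_1)$ and $A(X_1)Y_1$, destroying the identity. Concretely, I must extend $(\mu,\nu)\colon W_1\to H$ to a homomorphism $(\varphi,\psi)\colon W_2\to H$ annihilating $(T_1,T_2)$. By freeness I may prescribe $\varphi$ on $X_2\setminus X_1$ and $\psi$ on $Y_2\setminus Y_1$ arbitrarily; the real content is to choose these so that $T_1\subseteq\ker\varphi$ and $T_2\subseteq\ker\psi$. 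A natural first attempt is the canonical retraction sending every new generator to $0$, i.e. the composites $\varphi=\mu\pi$ and $\psi=\nu\rho$ for the projections $\pi\colon L(X_2)\to L(X_1)$ and $\rho\colon A(X_2)Y_2\to A(X_1)Y_1$ that kill the new generators; verifying that this, or a suitable perturbation of it obtained from the freedom on the new generators, annihilates all of $T_1$ and $T_2$ is exactly where the hypothesis $(\mu,\nu)\in(T_1\cap L(X_1),T_2\cap A(X_1)Y_1)_{W_1,H}^{\prime}$ — and, I expect, the $H$-closedness of $(T_1,T_2)$ — must be used. This is the delicate point, since the hypothesis only controls the traces $T_1\cap L(X_1)$ and $T_2\cap A(X_1)Y_1$, whereas the extension must respect the whole of $T_1$ and $T_2$.
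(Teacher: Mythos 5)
Your first two paragraphs are correct, and they are essentially the paper's own proof: the entire content of the proposition is that every $\left( \varphi ,\psi \right) \in \left[ \left( \mu ,\nu \right) \right] $ restricts on $W_{1}$ to exactly $\left( \mu ,\nu \right) $ --- on the Lie component because two homomorphisms of $L\left( X_{1}\right) $ agreeing on $X_{1}$ coincide, on the module component by your induced-$A\left( X_{1}\right) $-action argument (which the paper leaves implicit) --- after which both inclusions are immediate.

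The ``main obstacle'' you set yourself in the last paragraph, however, cannot be overcome, and this is the genuine defect of the proposal as a finished proof. You are right that, with the usual convention for an intersection over an empty family, the stated equality fails whenever $\left[ \left( \mu ,\nu \right) \right] =\varnothing $. But nonemptiness is \emph{not} a consequence of the stated hypotheses, nor even of $H$-closedness of $\left( T_{1},T_{2}\right) $, so no retraction-plus-perturbation argument can establish it. Concretely: take $Y_{1}=Y_{2}=\varnothing $, $X_{1}=\left\{ x_{1}\right\} \subset X_{2}=\left\{ x_{1},x_{2}\right\} $, $H=\left( L,\left\{ 0\right\} \right) $ where $L$ has basis $a,b$ with $\left[ a,b\right] =b$, and let $T_{1}$ be the $H$-closure of $\left\{ \left[ x_{1},x_{2}\right] -x_{1}\right\} $, $T_{2}=\left\{ 0\right\} $. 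Since $\left[ u,v\right] \in kb$ for all $u,v\in L$, every $\left( \varphi ,\psi \right) \in \left( T_{1},T_{2}\right) _{H}^{\prime }$ has $\varphi \left( x_{1}\right) \in kb$; on the other hand the solutions $\varphi \left( x_{1}\right) =\beta b$, $\varphi \left( x_{2}\right) =-a$ (for arbitrary $\beta \in k$) show that $T_{1}\cap L\left( X_{1}\right) =\left\{ 0\right\} $. Hence $\mu \left( x_{1}\right) =a$ (with $\nu $ the trivial map) satisfies the hypothesis of the proposition, yet $\left[ \left( \mu ,\nu \right) \right] =\varnothing $, and the first component of the claimed equality would read $kx_{1}=\ker \mu =\left\{ 0\right\} $, which is false. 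So the proposition is true only under the tacit assumption $\left[ \left( \mu ,\nu \right) \right] \neq \varnothing $; the paper's proof assumes this silently (its forward inclusion needs at least one witness $\varphi $ to conclude $\mu \left( t_{1}\right) =\varphi \left( t_{1}\right) =0$), and in the only two places the proposition is applied the assumption holds automatically: in Proposition \ref{cl1} the pairs $\left( \mu ,\nu \right) $ considered are by construction restrictions of actual elements of $\left( T_{1},T_{2}\right) _{H}^{\prime }$, and in Proposition \ref{cl2} one has $\left( T_{1},T_{2}\right) \subseteq W_{1}$, where your canonical retraction (send the new generators to zero) does work, since then $\varphi _{\mid L\left( X_{1}\right) }=\mu $ and $\psi _{\mid A\left( X_{1}\right) Y_{1}}=\nu $ already annihilate $T_{1}$ and $T_{2}$. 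The correct repair is therefore not to prove nonemptiness in general but to add it as a hypothesis (or to state the proposition only for those $\left( \mu ,\nu \right) $ arising as restrictions), exactly as the paper implicitly does.
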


\begin{proof}
If $t_{1}\in \left( \bigcap\limits_{\left( \varphi ,\psi \right) \in \left[
\left( \mu ,\nu \right) \right] }\ker \varphi \right) \cap L\left(
X_{1}\right) $, then $\mu \left( t_{1}\right) =\varphi \left( t_{1}\right)
=0 $ for every $\varphi $ such that $\left( \varphi ,\psi \right) \in \left[
\left( \mu ,\nu \right) \right] $. If $t_{2}\in \left(
\bigcap\limits_{\left( \varphi ,\psi \right) \in \left[ \left( \mu ,\nu
\right) \right] }\ker \psi \right) \cap A\left( X_{1}\right) Y_{1}$, then $%
\nu \left( t_{2}\right) =\psi \left( t_{2}\right) =0$ for every $\psi $ such
that $\left( \varphi ,\psi \right) \in \left[ \left( \mu ,\nu \right) \right]
$.

If $t_{1}\in \ker \mu $, then $t_{1}\in L\left( X_{1}\right) $, so $\varphi
\left( t_{1}\right) =\mu \left( t_{1}\right) =0$ holds for every $\varphi $
such that $\left( \varphi ,\psi \right) \in \left[ \left( \mu ,\nu \right) %
\right] $. If $t_{2}\in \ker \nu $, then $t_{2}\in A\left( X_{1}\right)
Y_{1} $, so $\psi \left( t_{2}\right) =\nu \left( t_{2}\right) =0$ holds for
every $\psi $ such that $\left( \varphi ,\psi \right) \in \left[ \left( \mu
,\nu \right) \right] $.
\end{proof}

\begin{proposition}
\label{cl1}If $\left( T_{1},T_{2}\right) \subseteq W_{2}$ is $H$-closed,
then 
\begin{equation*}
\left( T_{1},T_{2}\right) \cap W_{1}=\left( T_{1}\cap L\left( X_{1}\right)
,T_{2}\cap A\left( X_{1}\right) Y_{1}\right)
\end{equation*}%
is $H$-closed in $W_{1}$.
\end{proposition}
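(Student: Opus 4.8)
The plan is to verify directly that passing to the double closure in $W_1$ does not enlarge the set. Write $S_1=T_1\cap L(X_1)$ and $S_2=T_2\cap A(X_1)Y_1$, so that the assertion reads $(S_1,S_2)_{W_1,H}''=(S_1,S_2)$. Since the inclusion $(S_1,S_2)\subseteq(S_1,S_2)_{W_1,H}''$ always holds, I only need the reverse inclusion, that is
\[
\bigcap_{(\mu,\nu)\in(S_1,S_2)_{W_1,H}'}\ker\mu\subseteq S_1
\qquad\text{and}\qquad
\bigcap_{(\mu,\nu)\in(S_1,S_2)_{W_1,H}'}\ker\nu\subseteq S_2 .
\]

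First I would set up the restriction map. Since $X_1\subseteq X_2$ and $Y_1\subseteq Y_2$ we have $W_1\subseteq W_2$, so every homomorphism $(\varphi,\psi)\colon W_2\to H$ restricts to a homomorphism $(\varphi_{\mid L(X_1)},\psi_{\mid A(X_1)Y_1})\colon W_1\to H$; the action-compatibility (\ref{homomorphism_formula}) is inherited because $L(X_1)$ carries $A(X_1)Y_1$ into itself. I would then check that this restriction sends $(T_1,T_2)_{W_2,H}'$ into $(S_1,S_2)_{W_1,H}'$: if $T_1\subseteq\ker\varphi$ and $T_2\subseteq\ker\psi$, then for $s\in S_1=T_1\cap L(X_1)$ we get $\varphi_{\mid L(X_1)}(s)=\varphi(s)=0$, and similarly for $S_2$. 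Let $R\subseteq(S_1,S_2)_{W_1,H}'$ denote the image of this map.

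The heart of the argument is one computation of the intersection of kernels over $R$. For $(\varphi,\psi)\in(T_1,T_2)_{W_2,H}'$ with restriction $(\mu,\nu)\in R$ one has $\ker\mu=\ker(\varphi_{\mid L(X_1)})=\ker\varphi\cap L(X_1)$, so, distributing the intersection (the same sets occur on both sides, possibly repeated),
\[
\bigcap_{(\mu,\nu)\in R}\ker\mu
=\bigcap_{(\varphi,\psi)\in(T_1,T_2)_{W_2,H}'}\bigl(\ker\varphi\cap L(X_1)\bigr)
=\Bigl(\bigcap_{(\varphi,\psi)\in(T_1,T_2)_{W_2,H}'}\ker\varphi\Bigr)\cap L(X_1)=S_1 ,
\]
the middle step using $H$-closedness of $(T_1,T_2)$ in $W_2$, which gives $\bigcap\ker\varphi=T_1$, and then $T_1\cap L(X_1)=S_1$. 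The identical computation with $\psi$, $A(X_1)Y_1$, $S_2$ in the roles of $\varphi$, $L(X_1)$, $S_1$ yields $\bigcap_{(\mu,\nu)\in R}\ker\nu=S_2$.

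Finally I would close by a sandwich of inclusions. Because $R\subseteq(S_1,S_2)_{W_1,H}'$, intersecting over the larger index set only shrinks the result, so
\[
\bigcap_{(\mu,\nu)\in(S_1,S_2)_{W_1,H}'}\ker\mu\subseteq\bigcap_{(\mu,\nu)\in R}\ker\mu=S_1 ;
\]
conversely every $(\mu,\nu)\in(S_1,S_2)_{W_1,H}'$ satisfies $S_1\subseteq\ker\mu$, giving the opposite inclusion $S_1\subseteq\bigcap_{(\mu,\nu)}\ker\mu$. Hence the first component of $(S_1,S_2)_{W_1,H}''$ equals $S_1$, and symmetrically the second equals $S_2$, which is the claim. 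The only real obstacle is bookkeeping the directions of all the inclusions, together with the conceptual point that the restriction map need \emph{not} be surjective: it suffices that its image $R$ already cuts the intersection down to $(S_1,S_2)$. This is precisely what lets the argument avoid the nonemptiness hypothesis implicit in applying Proposition \ref{hom_for_cl} fibre-by-fibre; that proposition offers an alternative route, recovering $(\ker\mu,\ker\nu)$ from the fibre $[(\mu,\nu)]$, but only when that fibre is nonempty.
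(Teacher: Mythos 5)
Your proof is correct, and its skeleton is the same as the paper's: restrict the solutions $(\varphi,\psi)\in(T_1,T_2)_{W_2,H}'$ to $W_1$, note that the restrictions land in $(S_1,S_2)_{W_1,H}'$ (writing $S_1=T_1\cap L(X_1)$, $S_2=T_2\cap A(X_1)Y_1$ as you do), compute the intersection of their kernels using $H$-closedness of $(T_1,T_2)$ in $W_2$, and finish with the sandwich of inclusions. Where you genuinely diverge is in the machinery. The paper partitions $(T_1,T_2)_{W_2,H}'$ into the disjoint fibres $[(\mu,\nu)]$ of the restriction map (taking only those $(\mu,\nu)$ for which the fibre actually contains some $(\varphi,\psi)$), invokes Proposition \ref{hom_for_cl} on each fibre to recover $\ker\mu$ as $\left(\bigcap_{(\varphi,\psi)\in[(\mu,\nu)]}\ker\varphi\right)\cap L(X_1)$, and then reassembles the full intersection fibre by fibre. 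You replace all of that by the elementary identity $\ker(\varphi_{\mid L(X_1)})=\ker\varphi\cap L(X_1)$, applied one homomorphism at a time, together with distributivity of intersection; indexing over the image $R$ rather than over fibres makes the bookkeeping trivial. Your route is shorter and, as you observe, sidesteps the nonemptiness caveat implicit in Proposition \ref{hom_for_cl}, which the paper handles by restricting attention to fibres that arise. What the paper's heavier setup buys is reuse: Proposition \ref{hom_for_cl} and the fibre language are the same engine that drives the proof of Proposition \ref{cl2}, so the authors pay the overhead once and amortize it over both propositions, whereas your streamlined argument would have to be adapted separately there.
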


\begin{proof}
\begin{equation*}
\left( T_{1},T_{2}\right) _{H}^{\prime \prime }=\left(
\bigcap\limits_{\left( \varphi ,\psi \right) \in \left( T_{1},T_{2}\right)
_{H}^{\prime }}\ker \varphi ,\bigcap\limits_{\left( \varphi ,\psi \right)
\in \left( T_{1},T_{2}\right) _{H}^{\prime }}\ker \psi \right) =\left(
T_{1},T_{2}\right) .
\end{equation*}%
\begin{equation*}
\left( T_{1}\cap L\left( X_{1}\right) ,T_{2}\cap A\left( X_{1}\right)
Y_{1}\right) _{W_{1},H}^{\prime \prime }=
\end{equation*}%
\begin{equation*}
\left( \bigcap\limits_{\left( \mu ,\nu \right) \in \left( T_{1}\cap L\left(
X_{1}\right) ,T_{2}\cap A\left( X_{1}\right) Y_{1}\right) _{W_{1},H}^{\prime
}}\ker \mu ,\bigcap\limits_{\left( \mu ,\nu \right) \in \left( T_{1}\cap
L\left( X_{1}\right) ,T_{2}\cap A\left( X_{1}\right) Y_{1}\right)
_{W_{1},H}^{\prime }}\ker \nu \right) .
\end{equation*}

We will consider $\left( \varphi ,\psi \right) \in \left( T_{1},T_{2}\right)
_{H}^{\prime }$. There exists only one $\left( \mu ,\nu \right) \in \mathrm{%
Hom}\left( W_{1},H\right) $ such that $\varphi _{\mid X_{1}}=\mu _{\mid
X_{1}}$, $\psi _{\mid Y_{1}}=\nu _{\mid Y_{1}}$. If $t_{1}\in T_{1}\cap
L\left( X_{1}\right) $, then $\mu \left( t_{1}\right) =\varphi \left(
t_{1}\right) =0$, if $t_{2}\in T_{2}\cap A\left( X_{1}\right) Y_{1}$, then $%
\nu \left( t_{2}\right) =\psi \left( t_{2}\right) =0$. Hence $\left( \mu
,\nu \right) \in \left( T_{1}\cap L\left( X_{1}\right) ,T_{2}\cap A\left(
X_{1}\right) Y_{1}\right) _{W_{1},H}^{\prime }$. So by Proposition \ref%
{hom_for_cl} 
\begin{equation*}
\left( \left( \bigcap\limits_{\left( \varphi ,\psi \right) \in \left[ \left(
\mu ,\nu \right) \right] }\ker \varphi \right) \cap L\left( X_{1}\right)
,\left( \bigcap\limits_{\left( \varphi ,\psi \right) \in \left[ \left( \mu
,\nu \right) \right] }\ker \psi \right) \cap A\left( X_{1}\right)
Y_{1}\right) =\left( \ker \mu ,\ker \nu \right) .
\end{equation*}

The set $\left( T_{1},T_{2}\right) _{H}^{\prime }$ can by presented as union
of the disjoint sets $\left[ \left( \mu ,\nu \right) \right] $, where $%
\left( \mu ,\nu \right) \in \mathrm{Hom}\left( W_{1},H\right) $ such that
exists $\left( \varphi ,\psi \right) \in \left( T_{1},T_{2}\right)
_{H}^{\prime }$, for which $\left( \varphi ,\psi \right) \in \left[ \left(
\mu ,\nu \right) \right] $ holds. 
\begin{equation*}
\left( T_{1}\cap L\left( X_{1}\right) ,T_{2}\cap A\left( X_{1}\right)
Y_{1}\right) _{W_{1},H}^{\prime \prime }\subseteq
\end{equation*}%
\begin{equation*}
\left( \bigcap\limits_{\substack{ \left( \mu ,\nu \right) \in \left(
T_{1}\cap L\left( X_{1}\right) ,T_{2}\cap A\left( X_{1}\right) Y_{1}\right)
_{H}^{\prime },  \\ \exists \left( \varphi ,\psi \right) \in \left(
T_{1},T_{2}\right) _{H}^{\prime }\mid \left( \varphi ,\psi \right) \in \left[
\left( \mu ,\nu \right) \right] }}\ker \mu ,\bigcap\limits_{\substack{ %
\left( \mu ,\nu \right) \in \left( T_{1}\cap L\left( X_{1}\right) ,T_{2}\cap
A\left( X_{1}\right) Y_{1}\right) _{H}^{\prime },  \\ \exists \left( \varphi
,\psi \right) \in \left( T_{1},T_{2}\right) _{H}^{\prime }\mid \left(
\varphi ,\psi \right) \in \left[ \left( \mu ,\nu \right) \right] }}\ker \nu
\right) .
\end{equation*}%
\begin{equation*}
\bigcap\limits_{\substack{ \left( \mu ,\nu \right) \in \left( T_{1}\cap
L\left( X_{1}\right) ,T_{2}\cap A\left( X_{1}\right) Y_{1}\right)
_{H}^{\prime },  \\ \exists \left( \varphi ,\psi \right) \in \left(
T_{1},T_{2}\right) _{H}^{\prime }\mid \left( \varphi ,\psi \right) \in \left[
\left( \mu ,\nu \right) \right] }}\ker \mu =
\end{equation*}%
\begin{equation*}
\bigcap\limits_{\substack{ \left( \mu ,\nu \right) \in \left( T_{1}\cap
L\left( X_{1}\right) ,T_{2}\cap A\left( X_{1}\right) Y_{1}\right)
_{H}^{\prime },  \\ \exists \left( \varphi ,\psi \right) \in \left(
T_{1},T_{2}\right) _{H}^{\prime }\mid \left( \varphi ,\psi \right) \in \left[
\left( \mu ,\nu \right) \right] }}\left( \left( \bigcap\limits_{\left(
\varphi ,\psi \right) \in \left[ \left( \mu ,\nu \right) \right] }\ker
\varphi \right) \cap L\left( X_{1}\right) \right) =
\end{equation*}%
\begin{equation*}
\left( \bigcap\limits_{\left( \varphi ,\psi \right) \in \left(
T_{1},T_{2}\right) _{H}^{\prime }}\ker \varphi \right) \cap L\left(
X_{1}\right) =T_{1}\cap L\left( X_{1}\right) .
\end{equation*}%
\begin{equation*}
\bigcap\limits_{\substack{ \left( \mu ,\nu \right) \in \left( T_{1}\cap
L\left( X_{1}\right) ,T_{2}\cap A\left( X_{1}\right) Y_{1}\right)
_{H}^{\prime },  \\ \exists \left( \varphi ,\psi \right) \in \left(
T_{1},T_{2}\right) _{H}^{\prime }\mid \left( \varphi ,\psi \right) \in \left[
\left( \mu ,\nu \right) \right] }}\ker \nu =
\end{equation*}%
\begin{equation*}
\bigcap\limits_{\substack{ \left( \mu ,\nu \right) \in \left( T_{1}\cap
L\left( X_{1}\right) ,T_{2}\cap A\left( X_{1}\right) Y_{1}\right)
_{H}^{\prime },  \\ \exists \left( \varphi ,\psi \right) \in \left(
T_{1},T_{2}\right) _{H}^{\prime }\mid \left( \varphi ,\psi \right) \in \left[
\left( \mu ,\nu \right) \right] }}\left( \left( \bigcap\limits_{\left(
\varphi ,\psi \right) \in \left[ \left( \mu ,\nu \right) \right] }\ker \psi
\right) \cap A\left( X_{1}\right) Y_{1}\right) =
\end{equation*}%
\begin{equation*}
\left( \bigcap\limits_{\left( \varphi ,\psi \right) \in \left(
T_{1},T_{2}\right) _{H}^{\prime }}\ker \psi \right) \cap A\left(
X_{1}\right) Y_{1}=T_{2}\cap A\left( X_{1}\right) Y_{1}.
\end{equation*}
\end{proof}

\begin{proposition}
\label{cl2}If $\left( T_{1},T_{2}\right) \subseteq W_{1}$ is $H$-closed in $%
W_{1}$, then 
\begin{equation*}
\left( T_{1},T_{2}\right) =\left( T_{1},T_{2}\right) _{W_{2},H}^{\prime
\prime }\cap W_{1}.
\end{equation*}
\end{proposition}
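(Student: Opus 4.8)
The plan is to prove the two inclusions separately, writing both sides as intersections of kernels and relating homomorphisms on $W_2$ to homomorphisms on $W_1$ by restriction and extension. The inclusion $\left( T_{1},T_{2}\right) \subseteq \left( T_{1},T_{2}\right) _{W_{2},H}^{\prime \prime }\cap W_{1}$ is immediate and does not use the hypothesis. If $t_{1}\in T_{1}\subseteq L\left( X_{1}\right) $, then by definition every $\left( \varphi ,\psi \right) \in \left( T_{1},T_{2}\right) _{W_{2},H}^{\prime }$ satisfies $T_{1}\subseteq \ker \varphi $, so $\varphi \left( t_{1}\right) =0$; hence $t_{1}$ lies in $\bigcap \ker \varphi $ and, being in $L\left( X_{1}\right) $, in the first component of $\left( T_{1},T_{2}\right) _{W_{2},H}^{\prime \prime }\cap W_{1}$. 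The argument for $t_{2}\in T_{2}\subseteq A\left( X_{1}\right) Y_{1}$ is identical.

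For the reverse inclusion I would exploit the freeness of $W_{2}$. The core observation is that restriction along the embedding $W_{1}\subseteq W_{2}$ carries $\left( T_{1},T_{2}\right) _{W_{2},H}^{\prime }$ \emph{onto} $\left( T_{1},T_{2}\right) _{W_{1},H}^{\prime }$. Indeed, if $\left( \varphi ,\psi \right) \in \left( T_{1},T_{2}\right) _{W_{2},H}^{\prime }$, its restriction $\left( \mu ,\nu \right) =\left( \varphi _{\mid L\left( X_{1}\right) },\psi _{\mid A\left( X_{1}\right) Y_{1}}\right) $ is a homomorphism $W_{1}\rightarrow H$ with $T_{1}\subseteq \ker \mu $, $T_{2}\subseteq \ker \nu $ (since $T_{1}\subseteq L\left( X_{1}\right) $, $T_{2}\subseteq A\left( X_{1}\right) Y_{1}$), so it lies in $\left( T_{1},T_{2}\right) _{W_{1},H}^{\prime }$. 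Conversely, given $\left( \mu ,\nu \right) \in \left( T_{1},T_{2}\right) _{W_{1},H}^{\prime }$, Definition \ref{freerep} lets me extend it to some $\left( \varphi ,\psi \right) \in \mathrm{Hom}\left( W_{2},H\right) $ by assigning arbitrary images to the generators in $X_{2}\setminus X_{1}$ and $Y_{2}\setminus Y_{1}$; this extension still satisfies $T_{1}\subseteq \ker \varphi $, $T_{2}\subseteq \ker \psi $, so it lies in $\left( T_{1},T_{2}\right) _{W_{2},H}^{\prime }$ and restricts to $\left( \mu ,\nu \right) $.

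Now take $t_{1}$ in the first component of $\left( T_{1},T_{2}\right) _{W_{2},H}^{\prime \prime }\cap W_{1}$, that is $t_{1}\in L\left( X_{1}\right) $ with $\varphi \left( t_{1}\right) =0$ for every $\left( \varphi ,\psi \right) \in \left( T_{1},T_{2}\right) _{W_{2},H}^{\prime }$. For an arbitrary $\left( \mu ,\nu \right) \in \left( T_{1},T_{2}\right) _{W_{1},H}^{\prime }$ I invoke the surjectivity just established to pick an extension $\left( \varphi ,\psi \right) $; then $\mu \left( t_{1}\right) =\varphi \left( t_{1}\right) =0$ because $t_{1}\in L\left( X_{1}\right) $. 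Thus $t_{1}\in \bigcap _{\left( \mu ,\nu \right) }\ker \mu $, which is the first component of $\left( T_{1},T_{2}\right) _{W_{1},H}^{\prime \prime }=\left( T_{1},T_{2}\right) $ by the hypothesis that $\left( T_{1},T_{2}\right) $ is $H$-closed in $W_{1}$; hence $t_{1}\in T_{1}$. The same reasoning with $t_{2}\in A\left( X_{1}\right) Y_{1}$ yields $t_{2}\in T_{2}$, completing the inclusion.

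I expect the only real content to be the surjectivity of restriction, which is precisely where the freeness of $W_{2}$ and the inclusions $X_{1}\subseteq X_{2}$, $Y_{1}\subseteq Y_{2}$ are used: one must check that an arbitrary free extension of a $W_{1}$-solution $\left( \mu ,\nu \right) $ remains a $W_{2}$-solution, which holds exactly because $T_{1},T_{2}$ are supported on the $W_{1}$-part and the extension agrees with $\left( \mu ,\nu \right) $ on $L\left( X_{1}\right) $ and $A\left( X_{1}\right) Y_{1}$. Everything else is bookkeeping with intersections of kernels, and the $H$-closedness of $\left( T_{1},T_{2}\right) $ in $W_{1}$ is used only in the last step to identify $\bigcap _{\left( \mu ,\nu \right) }\ker \mu $ with $T_{1}$ (and likewise for the module component).
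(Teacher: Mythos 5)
Your proof is correct and is essentially the paper's own argument: the key step in both is that, by freeness of $W_{2}$, restriction gives a surjection from $\left( T_{1},T_{2}\right) _{W_{2},H}^{\prime }$ onto $\left( T_{1},T_{2}\right) _{W_{1},H}^{\prime }$ (well defined precisely because $T_{1}\subseteq L\left( X_{1}\right) $ and $T_{2}\subseteq A\left( X_{1}\right) Y_{1}$), after which $H$-closedness in $W_{1}$ identifies $\bigcap_{\left( \mu ,\nu \right) }\ker \mu $ with $T_{1}$ and $\bigcap_{\left( \mu ,\nu \right) }\ker \nu $ with $T_{2}$. The only difference is bookkeeping: the paper partitions $\left( T_{1},T_{2}\right) _{W_{2},H}^{\prime }$ into the restriction classes $\left[ \left( \mu ,\nu \right) \right] $ and computes the intersections of kernels class by class via Proposition \ref{hom_for_cl}, whereas you argue pointwise with one chosen extension per $\left( \mu ,\nu \right) $, which inlines the same computation.
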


\begin{proof}
In $\left( T_{1},T_{2}\right) _{W_{2},H}^{\prime }=\left\{ \left( \varphi
,\psi \right) \in \mathrm{Hom}\left( W_{2},H\right) \mid \ker \varphi
\supseteq T_{1},\ker \psi \supseteq T_{2}\right\} $ we can define
equivalence: $\left( \varphi _{1},\psi _{1}\right) \sim \left( \varphi
_{2},\psi _{2}\right) $ if and only if $\varphi _{1\mid X_{1}}=\varphi
_{2\mid X_{1}}$, $\psi _{1\mid Y_{1}}=\psi _{2\mid Y_{1}}$. As in the proof
of Proposition \ref{cl1}, for every class of this equivalence there exist
only one $\left( \mu ,\nu \right) \in \left( T_{1},T_{2}\right)
_{W_{1},H}^{\prime }$ such that this class coincide with $\left[ \left( \mu
,\nu \right) \right] $. Vice versa, for every $\left( \mu ,\nu \right) \in
\left( T_{1},T_{2}\right) _{W_{1},H}^{\prime }$ there exist only one class
of elements of the set $\left( T_{1},T_{2}\right) _{W_{2},H}^{\prime }$,
which coincide with $\left[ \left( \mu ,\nu \right) \right] $.%
\begin{equation*}
\left( T_{1},T_{2}\right) _{W_{2},H}^{\prime \prime }\cap W\left(
X_{1},Y_{1}\right) =
\end{equation*}%
\begin{equation*}
\left( \left( \bigcap\limits_{\left( \varphi ,\psi \right) \in \left(
T_{1},T_{2}\right) _{W_{2},H}^{\prime }}\ker \varphi \right) \cap L\left(
X_{1}\right) ,\left( \bigcap\limits_{\left( \varphi ,\psi \right) \in \left(
T_{1},T_{2}\right) _{W_{2},H}^{\prime }}\ker \psi \right) \cap A\left(
X_{1}\right) Y_{1}\right) .
\end{equation*}%
By Proposition \ref{hom_for_cl} we have%
\begin{equation*}
\left( \bigcap\limits_{\left( \varphi ,\psi \right) \in \left(
T_{1},T_{2}\right) _{W_{2},H}^{\prime }}\ker \varphi \right) \cap L\left(
X_{1}\right) =
\end{equation*}%
\begin{equation*}
\bigcap\limits_{\left( \mu ,\nu \right) \in \left( T_{1},T_{2}\right)
_{W_{1},H}^{\prime }}\left( \left( \bigcap\limits_{\left( \varphi ,\psi
\right) \in \left[ \left( \mu ,\nu \right) \right] }\ker \varphi \right)
\cap L\left( X_{1}\right) \right) =
\end{equation*}%
\begin{equation*}
\bigcap\limits_{\left( \mu ,\nu \right) \in \left( T_{1},T_{2}\right)
_{W_{1},H}^{\prime }}\ker \mu =T_{1}.
\end{equation*}%
\begin{equation*}
\left( \bigcap\limits_{\left( \varphi ,\psi \right) \in \left(
T_{1},T_{2}\right) _{W_{2},H}^{\prime }}\ker \psi \right) \cap A\left(
X_{1}\right) Y_{1}=
\end{equation*}%
\begin{equation*}
\bigcap\limits_{\left( \mu ,\nu \right) \in \left( T_{1},T_{2}\right)
_{W_{1},H}^{\prime }}\left( \left( \bigcap\limits_{\left( \varphi ,\psi
\right) \in \left[ \left( \mu ,\nu \right) \right] }\ker \psi \right) \cap
A\left( X_{1}\right) Y_{1}\right) =
\end{equation*}%
\begin{equation*}
\bigcap\limits_{\left( \mu ,\nu \right) \in \left( T_{1},T_{2}\right)
_{W_{1},H}^{\prime }}\ker \nu =T_{2}.
\end{equation*}
\end{proof}

\begin{theorem}
\label{cl}If $\left( L_{1},V_{1}\right) =H_{1},\left( L_{2},V_{2}\right)
=H_{2}\in \Xi $ and $Cl_{H_{1}}\left( W_{2}\right) =Cl_{H_{2}}\left(
W_{2}\right) $, then $Cl_{H_{1}}\left( W_{1}\right) =Cl_{H_{2}}\left(
W_{1}\right) $.
\end{theorem}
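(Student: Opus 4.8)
The plan is to prove the equality of the two families $Cl_{H_1}\left( W_1\right)$ and $Cl_{H_2}\left( W_1\right)$ by establishing the single inclusion $Cl_{H_1}\left( W_1\right) \subseteq Cl_{H_2}\left( W_1\right)$; the reverse inclusion then follows by interchanging the roles of $H_1$ and $H_2$, since the hypothesis $Cl_{H_1}\left( W_2\right) =Cl_{H_2}\left( W_2\right)$ is symmetric in $H_1$ and $H_2$. Thus it suffices to show that a set $\left( T_1,T_2\right) \subseteq W_1$ which is $H_1$-closed in $W_1$ is automatically $H_2$-closed in $W_1$.

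First I would lift the closed set from $W_1$ to $W_2$. Given $\left( T_1,T_2\right) \in Cl_{H_1}\left( W_1\right)$, Proposition \ref{cl2} (applied with $H=H_1$) gives $\left( T_1,T_2\right) =\left( T_1,T_2\right) _{W_2,H_1}^{\prime \prime }\cap W_1$. Put $S=\left( T_1,T_2\right) _{W_2,H_1}^{\prime \prime }\subseteq W_2$. Being an algebraic closure taken in $W_2$ with respect to $H_1$, the set $S$ is $H_1$-closed in $W_2$, that is, $S\in Cl_{H_1}\left( W_2\right)$; this uses only the idempotency of the closure operator, $S_{W_2,H_1}^{\prime \prime }=S$, which is a standard property of the Galois correspondence described before the definition of $H$-closed sets.

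Next I would transfer across $W_2$ and restrict back to $W_1$. By the hypothesis $Cl_{H_1}\left( W_2\right) =Cl_{H_2}\left( W_2\right)$, the set $S$ also lies in $Cl_{H_2}\left( W_2\right)$, i.e. $S$ is $H_2$-closed in $W_2$. Now Proposition \ref{cl1} (applied with $H=H_2$ to the $H_2$-closed set $S\subseteq W_2$) shows that $S\cap W_1$ is $H_2$-closed in $W_1$. But $S\cap W_1=\left( T_1,T_2\right)$ by the identity obtained from Proposition \ref{cl2} in the previous step, so $\left( T_1,T_2\right)$ is $H_2$-closed in $W_1$, i.e. $\left( T_1,T_2\right) \in Cl_{H_2}\left( W_1\right)$. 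This completes the inclusion, and symmetry finishes the proof.

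The conceptual work is carried entirely by Propositions \ref{cl2} and \ref{cl1}: the former realizes every $H_1$-closed set in $W_1$ as the $W_1$-restriction of an $H_1$-closed set in $W_2$, and the latter restricts $H_2$-closed sets from $W_2$ back to $W_1$, with the hypothesis on $W_2$ serving as the bridge between the two representations. The only points requiring care are the idempotency of the closure operator used to place $S$ in $Cl_{H_1}\left( W_2\right)$, and checking that the operation $\cap W_1$ denotes the same pairwise intersection $\left( S_1\cap L\left( X_1\right) ,S_2\cap A\left( X_1\right) Y_1\right)$ in both propositions, so that the identities coming from \ref{cl1} and \ref{cl2} compose correctly; neither of these is a genuine obstacle, so I expect no hard step once the two earlier propositions are in hand.
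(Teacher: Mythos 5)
Your proof is correct and follows essentially the same route as the paper's own: take $\left( T_{1},T_{2}\right) \in Cl_{H_{1}}\left( W_{1}\right) $, lift it to $W_{2}$ via Proposition \ref{cl2}, observe that the lift $\left( T_{1},T_{2}\right) _{W_{2},H_{1}}^{\prime \prime }$ lies in $Cl_{H_{1}}\left( W_{2}\right) =Cl_{H_{2}}\left( W_{2}\right) $, and restrict back to $W_{1}$ via Proposition \ref{cl1}. The only difference is presentational: you spell out the idempotency of the closure operator and the symmetry argument giving the reverse inclusion, both of which the paper leaves implicit.
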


\begin{proof}
We consider $\left( T_{1},T_{2}\right) \in Cl_{H_{1}}\left( W_{1}\right) $.
By Proposition \ref{cl2} $\left( T_{1},T_{2}\right) =\left(
T_{1},T_{2}\right) _{W_{2},H_{1}}^{\prime \prime }\cap W_{1}$. $\left(
T_{1},T_{2}\right) _{W_{2},H_{1}}^{\prime \prime }\in Cl_{H_{1}}\left(
W_{2}\right) =Cl_{H_{2}}\left( W_{2}\right) $. Therefore, by Proposition \ref%
{cl1}, $\left( T_{1},T_{2}\right) _{W_{2},H_{1}}^{\prime \prime }\cap
W_{1}=\left( T_{1},T_{2}\right) \in Cl_{H_{2}}\left( W_{1}\right) $.
\end{proof}

\section{Representations of Lie algebras and Lie algebras with
projection-derivation.}

\setcounter{equation}{0}

It is well known that if we have a representation of the Lie algebra $\left(
L,V\right) $ then in the $k$-linear space $M=L\oplus V$ we can define the
structure of Lie algebra if we define the new Lie brackets $\left[ ,\right]
_{M}$ by this formula%
\begin{equation}
\left[ l_{1}+v_{1},l_{2}+v_{2}\right] _{M}=\left[ l_{1},l_{2}\right]
+l_{1}\circ v_{2}-l_{2}\circ v_{1},  \label{new_brackets}
\end{equation}%
where $l_{1},l_{2}\in L$, $v_{1},v_{2}\in V$.

We will denote by $p$ the projection of $M$ on the linear subspace $V$. $%
p\left( l+v\right) =v$ for every $l\in L$, $v\in V$. We have

\begin{equation*}
p\left[ l_{1}+v_{1},l_{2}+v_{2}\right] _{M}=p\left( \left[ l_{1},l_{2}\right]
+l_{1}\circ v_{2}-l_{2}\circ v_{1}\right) =l_{1}\circ v_{2}-l_{2}\circ v_{1},
\end{equation*}%
\begin{equation*}
\left[ p\left( l_{1}+v_{1}\right) ,l_{2}+v_{2}\right] _{M}+\left[
l_{1}+v_{1},p\left( l_{2}+v_{2}\right) \right] _{M}=
\end{equation*}%
\begin{equation*}
\left[ v_{1},l_{2}+v_{2}\right] _{M}+\left[ l_{1}+v_{1},v_{2}\right]
_{M}=-l_{2}\circ v_{1}+l_{1}\circ v_{2}
\end{equation*}%
for every $l_{1},l_{2}\in L$, $v_{1},v_{2}\in V$. Therefore in the new Lie
algebra $p$ will be a derivation. We call these algebras: Lie algebras with
projection-derivation and denote $\left( M,p\right) $.

Vice versa, if we assume that we have a Lie algebra with
projection-derivation $\left( M,p\right) $ then we have the decomposition of
the $k$-linear space $M=\ker p\oplus \mathrm{im}p$. If we denote $\ker p=L$, 
$\mathrm{im}p=V$, then we can prove this proposition:

\begin{proposition}
If we consider $L$ with the Lie brackets inducted from $M$ then $L$ is a Lie
algebra. If we define 
\begin{equation}
l\circ v=\left[ l,v\right]  \label{Maction}
\end{equation}
for every $l\in L$ and every $v\in V$ then $\left( L,V\right) $ is a
representations of the Lie algebra $L$ over the linear space $V$.
\end{proposition}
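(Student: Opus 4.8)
The plan is to exploit the two defining properties of a Lie algebra with projection-derivation: that $p$ is a derivation of $M$, so
\begin{equation*}
p\left[ x,y\right] =\left[ p\left( x\right) ,y\right] +\left[ x,p\left(
y\right) \right]
\end{equation*}
holds for all $x,y\in M$, and that $p$ is idempotent, $p^{2}=p$. The idempotency gives the $k$-linear decomposition $M=\ker p\oplus \mathrm{im}\,p=L\oplus V$, and moreover identifies $V=\mathrm{im}\,p$ with the $1$-eigenspace of $p$: if $v\in V$ then $v=p\left( m\right) $ for some $m\in M$, whence $p\left( v\right) =p^{2}\left( m\right) =p\left( m\right) =v$. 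So I will freely use that $p\left( l\right) =0$ for $l\in L$ and $p\left( v\right) =v$ for $v\in V$; these two facts, together with bilinearity of the bracket, are all I need about $p$.

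First I would check that $L=\ker p$ is a Lie subalgebra, which is all that is meant by the phrase ``$L$ with the brackets induced from $M$ is a Lie algebra'': antisymmetry and the Jacobi identity are inherited from $M$, so only closure under the bracket requires proof. For $l_{1},l_{2}\in L$ the derivation identity gives
\begin{equation*}
p\left[ l_{1},l_{2}\right] =\left[ p\left( l_{1}\right) ,l_{2}\right] +\left[
l_{1},p\left( l_{2}\right) \right] =0,
\end{equation*}
so $\left[ l_{1},l_{2}\right] \in \ker p=L$, and $L$ is a subalgebra.

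Next I would show that the formula $l\circ v=\left[ l,v\right] $ actually defines an action $L\times V\rightarrow V$, i.e.\ that the bracket of an element of $L$ with an element of $V$ lands back in $V$. This is the one step where the derivation property does the real work: for $l\in L$ and $v\in V$,
\begin{equation*}
p\left[ l,v\right] =\left[ p\left( l\right) ,v\right] +\left[ l,p\left(
v\right) \right] =\left[ l,v\right] ,
\end{equation*}
so $\left[ l,v\right] $ lies in the $1$-eigenspace $V$. Since the bracket is $k$-bilinear, for fixed $l$ the map $v\mapsto \left[ l,v\right] $ is a $k$-linear endomorphism of $V$, so $\mathfrak{f}\left( l\right) \in \mathrm{End}_{k}\left( V\right) $ is well defined and $\mathfrak{f}$ is $k$-linear in $l$.

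Finally, to match the definition of a representation from the introduction, I would verify that $\mathfrak{f}:L\rightarrow \mathfrak{end}_{k}\left( V\right) $ is a homomorphism of Lie algebras, i.e.\ $\mathfrak{f}\left( \left[ l_{1},l_{2}\right] \right) =\left[ \mathfrak{f}\left( l_{1}\right) ,\mathfrak{f}\left( l_{2}\right) \right] $ as operators on $V$. Evaluating both sides on $v\in V$, the left side is $\left[ \left[ l_{1},l_{2}\right] ,v\right] $ and the right side is $\left[ l_{1},\left[ l_{2},v\right] \right] -\left[ l_{2},\left[ l_{1},v\right] \right] $; their equality is exactly the Jacobi identity in $M$. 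I do not expect a genuine obstacle here: the only conceptual point is recognizing $\mathrm{im}\,p$ as the $1$-eigenspace of $p$ and observing that the derivation property is precisely what forces the action to preserve $V$. Everything else reduces to the Jacobi identity together with the bilinearity and idempotency already in hand.
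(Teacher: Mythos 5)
Your proof is correct and follows essentially the same route as the paper's: closure of $\ker p$ under the bracket via the derivation identity, the computation $p\left[ l,v\right] =\left[ l,v\right] $ to show the action preserves $V=\mathrm{im}\,p$, and the Jacobi identity in $M$ for the homomorphism property. The only difference is that the paper additionally verifies $\left[ v_{1},v_{2}\right] =0$ for $v_{1},v_{2}\in \mathrm{im}\,p$ (the one step that uses $char\left( k\right) \neq 2$); this is not required by the proposition as literally stated, so your omission is not a gap, though that extra fact is what later ensures the bracket on $M$ is recovered from $\left( L,V\right) $ by formula (\ref{new_brackets}), i.e.\ that $\mathcal{F}$ and $\mathcal{F}^{-1}$ are mutually inverse.
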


\begin{proof}
If $l_{1},l_{2}\in \ker p$ then $p\left[ l_{1},l_{2}\right] =\left[ p\left(
l_{1}\right) ,l_{2}\right] +\left[ l_{1},p\left( l_{2}\right) \right] =0$,
so $L=\ker p$ is a Lie algebra.

If $l\in \ker p$, $v\in \mathrm{im}p$ then $p\left[ l,v\right] =\left[
p\left( l\right) ,v\right] +\left[ l,p\left( v\right) \right] =\left[ l,v%
\right] $, so $\left[ l,v\right] =l\circ v\in \mathrm{im}p$.

If $l_{1},l_{2}\in \ker p$, $v\in \mathrm{im}p$ then 
\begin{equation*}
\left[ l_{1},l_{2}\right] \circ v=\left[ \left[ l_{1},l_{2}\right] ,v\right]
=-\left[ \left[ l_{2},v\right] ,l_{1}\right] -\left[ \left[ v,l_{1}\right]
,l_{2}\right] =
\end{equation*}%
\begin{equation*}
\left[ l_{1},\left[ l_{2},v\right] \right] -\left[ l_{2},\left[ l_{1},v%
\right] \right] =l_{1}\circ \left( l_{2}\circ v\right) -l_{2}\circ \left(
l_{1}\circ v\right) .
\end{equation*}%
Also we have for $v_{1},v_{2}\in \mathrm{im}p$ then $p\left[ v_{1},v_{2}%
\right] =\left[ p\left( v_{1}\right) ,v_{2}\right] +\left[ v_{1},p\left(
v_{2}\right) \right] =\left[ v_{1},v_{2}\right] +\left[ v_{1},v_{2}\right] $%
. $char\left( k\right) \neq 2$, so $\left[ v_{1},v_{2}\right] \in \mathrm{im}%
p$, $p\left[ v_{1},v_{2}\right] =\left[ v_{1},v_{2}\right] $ and $\left[
v_{1},v_{2}\right] =0$. Therefore $\left( L,V\right) $ is a representation
of the Lie algebra $L$ over the linear space $V$.
\end{proof}

\begin{proposition}
\label{homomorphismsCorresp}We assume that $\left( \varphi ,\psi \right)
:\left( L_{1},V_{1}\right) \rightarrow \left( L_{2},V_{2}\right) $ is a
homomorphism of representations. Then $f=\varphi \oplus \psi :\left(
L_{1}\oplus V_{1},p_{V_{1}}\right) \rightarrow \left( L_{2}\oplus
V_{2},p_{V_{2}}\right) $, which define by formula $f\left( l+v\right)
=\varphi \left( l\right) +\psi \left( v\right) $ for every $l\in L_{1}$, $%
v\in V_{1}$ is a homomorphism of the Lie algebras with projection-derivation
and $\ker f=\ker \varphi \oplus \ker \psi $. Vice versa, if $f:\left(
M_{1},p_{1}\right) \rightarrow \left( M_{2},p_{2}\right) $ is a homomorphism
of the Lie algebras with projection-derivation then $\left( r_{2}f\kappa
_{1},p_{2}f\iota _{1}\right) :\left( \ker p_{1},\mathrm{im}p_{1}\right)
\rightarrow \left( \ker p_{2},\mathrm{im}p_{2}\right) $, where $%
r_{2}=id_{M_{2}}-p_{2}$ and $\kappa _{1}:\ker p_{1}\hookrightarrow M_{1}$, $%
\iota _{1}:\mathrm{im}p_{1}\hookrightarrow M_{1}$ are embeddings, is a
homomorphism of the representations of the Lie algebras and $\ker
r_{2}f\kappa _{1}=\ker f\cap \ker p_{1}$, $\ker p_{2}f\iota _{1}=\ker f\cap 
\mathrm{im}p_{1}$.
\end{proposition}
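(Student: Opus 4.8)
The plan is to verify directly that the two assignments are mutually inverse passages between homomorphisms of the two kinds of objects. Here a homomorphism $f\colon (M_1,p_1)\rightarrow (M_2,p_2)$ of Lie algebras with projection-derivation means a homomorphism of Lie algebras that commutes with the distinguished unary operations, i.e. $fp_1=p_2f$. Throughout I write $L_i=\ker p_i$, $V_i=\mathrm{im}\,p_i$, so that $M_i=L_i\oplus V_i$ and the action on $V_i$ is recovered by (\ref{Maction}).

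For the first direction I would take $(\varphi,\psi)$ a homomorphism of representations and set $f=\varphi\oplus\psi$. To see that $f$ preserves the bracket (\ref{new_brackets}) I expand both $f([l_1+v_1,l_2+v_2]_{M_1})$ and $[f(l_1+v_1),f(l_2+v_2)]_{M_2}$: the first equals $\varphi([l_1,l_2])+\psi(l_1\circ v_2)-\psi(l_2\circ v_1)$ and the second equals $[\varphi(l_1),\varphi(l_2)]+\varphi(l_1)\circ\psi(v_2)-\varphi(l_2)\circ\psi(v_1)$, and these coincide because $\varphi$ is a Lie homomorphism and because (\ref{homomorphism_formula}) gives $\psi(l\circ v)=\varphi(l)\circ\psi(v)$. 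Commutation with the projections is immediate, since both $fp_{V_1}$ and $p_{V_2}f$ send $l+v$ to $\psi(v)$. Finally, because the sum $L_2\oplus V_2$ is direct and $\varphi(l)\in L_2$, $\psi(v)\in V_2$, the equation $\varphi(l)+\psi(v)=0$ forces $\varphi(l)=0$ and $\psi(v)=0$ separately, whence $\ker f=\ker\varphi\oplus\ker\psi$.

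For the converse direction the key observation, and the one that makes everything collapse to a routine check, is that $fp_1=p_2f$ forces $f(\ker p_1)\subseteq\ker p_2$ and $f(\mathrm{im}\,p_1)\subseteq\mathrm{im}\,p_2$: if $p_1(m)=0$ then $p_2(f(m))=f(p_1(m))=0$, and if $v=p_1(m)$ then $f(v)=p_2(f(m))\in\mathrm{im}\,p_2$. Since $r_2=\mathrm{id}-p_2$ restricts to the identity on $\ker p_2$ and $p_2$ restricts to the identity on $\mathrm{im}\,p_2$, this shows that $\varphi=r_2f\kappa_1$ is just $f$ restricted to $L_1$ (landing in $L_2$) and $\psi=p_2f\iota_1$ is just $f$ restricted to $V_1$ (landing in $V_2$). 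Then $\varphi$ is a Lie homomorphism because $L_1$ is a subalgebra by the preceding Proposition, so $\varphi([l_1,l_2])=f([l_1,l_2])=[f(l_1),f(l_2)]=[\varphi(l_1),\varphi(l_2)]$; and the compatibility (\ref{homomorphism_formula}) follows from (\ref{Maction}): using that $l\circ v=[l,v]\in\mathrm{im}\,p_1$ (again by the preceding Proposition) one gets $\psi(l\circ v)=f([l,v])=[f(l),f(v)]=[\varphi(l),\psi(v)]=\varphi(l)\circ\psi(v)$. The kernel identities are then immediate: $\ker\varphi=\{l\in\ker p_1 : f(l)=0\}=\ker f\cap\ker p_1$, and likewise $\ker\psi=\ker f\cap\mathrm{im}\,p_1$.

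I do not expect a genuine obstacle here; the only point requiring care is recording, in the converse direction, that the ambient bracket of $M_i$ restricts correctly — that $[l_1,l_2]\in\ker p_1$ and $[l,v]\in\mathrm{im}\,p_1$ — so that $\varphi$ and $\psi$ really take values in $L_2$ and $V_2$ and the computations of $\varphi([l_1,l_2])$ and $\psi(l\circ v)$ may be carried out inside $M_2$ using that $f$ is a homomorphism of $M_1$. These facts are supplied by the preceding Proposition, after which both directions are bookkeeping.
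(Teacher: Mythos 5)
Your proposal is correct, and your first direction coincides with the paper's proof essentially word for word (expand both sides of the bracket identity, note that $fp_{V_1}$ and $p_{V_2}f$ both send $l+v$ to $\psi(v)$, and split the kernel using directness of the sum). In the converse direction, however, you take a genuinely cleaner route than the paper. The paper never isolates your key observation that $r_2f\kappa_1$ and $p_2f\iota_1$ are literally the restrictions of $f$ to $\ker p_1$ and $\mathrm{im}\,p_1$; instead it verifies each required identity by direct computation, expanding $rf\kappa\left[ l_{1},l_{2}\right]$, $\left[ rf\kappa\left( l_{1}\right) ,rf\kappa\left( l_{2}\right) \right]$, $rf\kappa\left( l\right) \circ pf\iota\left( v\right)$ and $pf\iota\left( l\circ v\right)$ via $r=id-p$ and the derivation identity $p\left[ a,b\right] =\left[ p\left( a\right) ,b\right] +\left[ a,p\left( b\right) \right]$, and then killing terms with $pf\left( l\right) =fp\left( l\right) =0$ (the restriction fact surfaces in the paper only in the kernel argument, where $f\left( l\right) =fr\left( l\right) =rf\kappa\left( l\right)$ is used). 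Your factoring pushes all the derivation-property work into the preceding Proposition — which supplies exactly that $\left[ l_{1},l_{2}\right] \in \ker p$ and $\left[ l,v\right] \in \mathrm{im}\,p$ — so that every remaining check is just the single equation $f\left[ a,b\right] =\left[ f\left( a\right) ,f\left( b\right) \right]$ restricted to the appropriate summands, and the kernel identities become tautologies. What each approach buys: yours is shorter, reuses prior work, and makes transparent why the functors $\mathcal{F}$ and $\mathcal{F}^{-1}$ later compose to the identity (the component maps are restrictions of $f$, so nothing is lost or distorted); the paper's computation is self-contained at this spot and does not require the reader to have internalized the structural content of the preceding Proposition. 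Both rest on the same two ingredients, $fp_1=p_2f$ and the derivation property of $p$, so there is no gap in either.
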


\begin{proof}
For the sake of brevity hear and in other proves we denote the various Lie
brackets, projections and embeddings by similar symbols. It should not cause
confusion because we not cause confusion when, for example, in the various
groups denote multiplication, taking the inverse element and unit by similar
symbols.

If $\left( \varphi ,\psi \right) :\left( L_{1},V_{1}\right) \rightarrow
\left( L_{2},V_{2}\right) $ is a homomorphism of representations then $%
f=\varphi \oplus \psi $ is a linear mapping. If $l_{1},l_{2}\in L_{1}$, $%
v_{1},v_{2}\in V_{1}$ then 
\begin{equation*}
f\left[ l_{1}+v_{1},l_{2}+v_{2}\right] =f\left( \left[ l_{1},l_{2}\right]
+l_{1}\circ v_{2}-l_{2}\circ v_{1}\right) =
\end{equation*}%
\begin{equation*}
\varphi \left[ l_{1},l_{2}\right] +\psi \left( l_{1}\circ v_{2}\right) -\psi
\left( l_{2}\circ v_{1}\right) =\left[ \varphi \left( l_{1}\right) ,\varphi
\left( l_{2}\right) \right] +\varphi \left( l_{1}\right) \circ \psi \left(
v_{2}\right) -\varphi \left( l_{2}\right) \circ \psi \left( v_{1}\right) .
\end{equation*}%
\begin{equation*}
\left[ f\left( l_{1}+v_{1}\right) ,f\left( l_{2}+v_{2}\right) \right] =\left[
\varphi \left( l_{1}\right) +\psi \left( v_{1}\right) ,\varphi \left(
l_{2}\right) +\psi \left( v_{2}\right) \right] =
\end{equation*}%
\begin{equation*}
\left[ \varphi \left( l_{1}\right) ,\varphi \left( l_{2}\right) \right]
+\varphi \left( l_{1}\right) \circ \psi \left( v_{2}\right) -\varphi \left(
l_{2}\right) \circ \psi \left( v_{1}\right) .
\end{equation*}

If $l\in L_{1}$, $v\in V_{1}$ then 
\begin{equation*}
fp\left( l+v\right) =f\left( v\right) =\psi \left( v\right) ,
\end{equation*}%
\begin{equation*}
pf\left( l+v\right) =p\left( \varphi \left( l\right) +\psi \left( v\right)
\right) =\psi \left( v\right) .
\end{equation*}%
So $f$ is a homomorphism of the Lie algebras with projection-derivation.

It is clear that $\ker f\supseteq \ker \varphi \oplus \ker \psi $. If $l\in
L_{1}$, $v\in V_{1}$ and $f\left( l+v\right) =\varphi \left( l\right) +\psi
\left( v\right) =0$, then, because $\varphi \left( l\right) \in L_{2}$, $%
\psi \left( v\right) \in V_{2}$, $\varphi \left( l\right) =0$, $\psi \left(
v\right) =0$. So $\ker f=\ker \varphi \oplus \ker \psi $.

Now we assume that $f:\left( M_{1},p_{1}\right) \rightarrow \left(
M_{2},p_{2}\right) $ is a homomorphism of the Lie algebras with
projection-derivation. $pr=p\left( id-p\right) =p-p^{2}=0$, so $rf\kappa
:\ker p\rightarrow \ker p$. Also is clear that $pf\iota :\mathrm{im}%
p\rightarrow \mathrm{im}p$.

It is clear that $rf\kappa $ and $pf\iota $ are linear mappings. For every $%
l\in \ker p$ we have $pf\left( l\right) =fp\left( l\right) =0$. So we have
for every $l_{1},l_{2}\in \ker p$%
\begin{equation*}
rf\kappa \left[ l_{1},l_{2}\right] =r\left[ f\left( l_{1}\right) ,f\left(
l_{2}\right) \right] =\left( id-p\right) \left[ f\left( l_{1}\right)
,f\left( l_{2}\right) \right] =
\end{equation*}%
\begin{equation*}
\left[ f\left( l_{1}\right) ,f\left( l_{2}\right) \right] -\left[ pf\left(
l_{1}\right) ,f\left( l_{2}\right) \right] -\left[ f\left( l_{1}\right)
,pf\left( l_{2}\right) \right] =\left[ f\left( l_{1}\right) ,f\left(
l_{2}\right) \right] .
\end{equation*}%
\begin{equation*}
\left[ rf\kappa \left( l_{1}\right) ,rf\kappa \left( l_{2}\right) \right] =%
\left[ \left( id-p\right) f\left( l_{1}\right) ,\left( id-p\right) f\left(
l_{2}\right) \right] =
\end{equation*}%
\begin{equation*}
\left[ f\left( l_{1}\right) ,f\left( l_{2}\right) \right] -\left[ pf\left(
l_{1}\right) ,f\left( l_{2}\right) \right] -\left[ f\left( l_{1}\right)
,pf\left( l_{2}\right) \right] +\left[ pf\left( l_{1}\right) ,pf\left(
l_{2}\right) \right] =
\end{equation*}%
\begin{equation*}
\left[ f\left( l_{1}\right) ,f\left( l_{2}\right) \right] .
\end{equation*}%
So $rf\kappa $ is a homomorphism of the Lie algebras. If $l\in \ker p$, $%
v\in \mathrm{im}p$, then 
\begin{equation*}
rf\kappa \left( l\right) \circ pf\iota \left( v\right) =\left[ rf\left(
l\right) ,pf\left( v\right) \right] =
\end{equation*}%
\begin{equation*}
\left[ f\left( l\right) ,pf\left( v\right) \right] -\left[ pf\left( l\right)
,pf\left( v\right) \right] =\left[ f\left( l\right) ,pf\left( v\right) %
\right] .
\end{equation*}%
\begin{equation*}
pf\iota \left( l\circ v\right) =pf\left[ l,v\right] =p\left[ f\left(
l\right) ,f\left( v\right) \right] =
\end{equation*}%
\begin{equation*}
\left[ pf\left( l\right) ,f\left( v\right) \right] +\left[ f\left( l\right)
,pf\left( v\right) \right] =\left[ f\left( l\right) ,pf\left( v\right) %
\right] .
\end{equation*}%
So $\left( rf\kappa ,pf\iota \right) $ is a homomorphism of the
representations of the Lie algebras.

It is clear that $\ker f\cap \ker p\subseteq \ker rf\kappa $. If $l\in \ker
p $ and $rf\kappa \left( l\right) =0$ then $l=r\left( l\right) $ and $%
f\left( l\right) =fr\left( l\right) =rf\kappa \left( l\right) =0$. So $\ker
rf\kappa =\ker f\cap \ker p$. Analogously $\ker pf\iota =\ker f\cap \mathrm{%
im}p$.
\end{proof}

We denote by $\Theta $ the variety of all Lie algebras with
projection-derivation. The elements of this variety are Lie algebras with
all operations and axioms of Lie algebras and with one additional unary
operation: projection $p$, which fulfills two axioms of linear map and two
additional axioms:

\begin{enumerate}
\item $p\left( p\left( m\right) \right) =p\left( m\right) $ holds for every $%
m\in M$,

\item $p\left[ m_{1},m_{2}\right] =\left[ p\left( m_{1}\right) ,m_{2}\right]
+\left[ m_{1},p\left( m_{2}\right) \right] $ holds for every $m_{1},m_{2}\in
M$,
\end{enumerate}

where $M\in \Theta $.

We can consider the varieties $\Xi $ and $\Theta $ as categories. The
objects of these categories are universal algebras from these varieties and
morphisms are homomorphisms. We have a functor $\mathcal{F}:\Xi \rightarrow
\Theta $, such that 
\begin{equation*}
\mathcal{F}\left( L,V\right) =\left( L\oplus V,p_{V}\right)
\end{equation*}%
for $\left( L,V\right) \in \mathrm{Ob}\Xi $, 
\begin{equation*}
\mathcal{F}\left( \left( \varphi ,\psi \right) :\left( L_{1},V_{1}\right)
\rightarrow \left( L_{2},V_{2}\right) \right) =\varphi \oplus \psi :\left(
L_{1}\oplus V_{1},p_{V_{1}}\right) \rightarrow \left( L_{2}\oplus
V_{2},p_{V_{2}}\right)
\end{equation*}%
for $\left( \varphi ,\psi \right) \in \mathrm{Mor}\Xi $.

Also we have a functor $\mathcal{F}^{-1}:\Theta \rightarrow \Xi $, such that 
\begin{equation*}
\mathcal{F}^{-1}\left( M,p\right) =\left( \ker p,\mathrm{im}p\right)
\end{equation*}%
for $\left( M,p\right) \in \mathrm{Ob}\Theta $, 
\begin{equation*}
\mathcal{F}^{-1}\left( f:\left( M_{1},p_{1}\right) \rightarrow \left(
M_{2},p_{2}\right) \right) =\left( rf\kappa ,pf\iota \right) :\left( \ker
p_{1},\mathrm{im}p_{1}\right) \rightarrow \left( \ker p_{2},\mathrm{im}%
p_{2}\right)
\end{equation*}%
for $f\in \mathrm{Mor}\Theta $.

It is easy to check that $\mathcal{FF}^{-1}=id_{\Theta }$, $\mathcal{F}^{-1}%
\mathcal{F}=id_{\Xi }$ so these functors are isomorphisms of categories.

\begin{theorem}
\label{freeWithProj}If $\left( F,p\right) =F\left( m_{1},\ldots
,m_{n}\right) $ is a free Lie algebras with projection-derivation with free
generators $\left\{ m_{1},\ldots ,m_{n}\right\} $ then $\mathcal{F}%
^{-1}\left( F,p\right) =\left( L,V\right) $ is a free representation with
the pair of sets of the free generators $\left\{ X,Y\right\} $, where $%
X=\left\{ r\left( m_{1}\right) ,\ldots ,r\left( m_{n}\right) \right\} $ and $%
Y=\left\{ p\left( m_{1}\right) ,\ldots ,p\left( m_{n}\right) \right\} $.
\end{theorem}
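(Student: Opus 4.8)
The plan is to verify directly the universal property in Definition \ref{freerep} for the pair $\left( L,V\right) =\left( \ker p,\mathrm{im}\,p\right) $, transporting the freeness of $\left( F,p\right) $ in $\Theta $ across the category isomorphism $\mathcal{F}^{-1}:\Theta \rightarrow \Xi $. First I would check the typing requirements. Since $p\left( r\left( m_{i}\right) \right) =p\left( m_{i}\right) -p^{2}\left( m_{i}\right) =0$, we have $X=\left\{ r\left( m_{1}\right) ,\ldots ,r\left( m_{n}\right) \right\} \subset \ker p=L$, while obviously $Y=\left\{ p\left( m_{1}\right) ,\ldots ,p\left( m_{n}\right) \right\} \subset \mathrm{im}\,p=V$, as required.

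Now fix an arbitrary representation $\left( P,U\right) \in \Xi $ together with maps $\varphi _{0}:X\rightarrow P$ and $\psi _{0}:Y\rightarrow U$; I must produce a homomorphism $\left( \varphi ,\psi \right) :\left( L,V\right) \rightarrow \left( P,U\right) $ extending them. Apply $\mathcal{F}$ to obtain $\left( P\oplus U,p_{U}\right) \in \Theta $. Since $\left( F,p\right) $ is free in $\Theta $ on $\left\{ m_{1},\ldots ,m_{n}\right\} $, the assignment $m_{i}\mapsto \varphi _{0}\left( r\left( m_{i}\right) \right) +\psi _{0}\left( p\left( m_{i}\right) \right) \in P\oplus U$ extends to a $\Theta $-homomorphism $g:\left( F,p\right) \rightarrow \left( P\oplus U,p_{U}\right) $. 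Setting $\left( \varphi ,\psi \right) :=\mathcal{F}^{-1}\left( g\right) =\left( r_{U}g\kappa ,p_{U}g\iota \right) $, where $r_{U}=\mathrm{id}-p_{U}$, Proposition \ref{homomorphismsCorresp} guarantees that $\left( \varphi ,\psi \right) $ is a homomorphism of representations from $\mathcal{F}^{-1}\left( F,p\right) =\left( L,V\right) $ to $\mathcal{F}^{-1}\left( P\oplus U,p_{U}\right) =\left( P,U\right) $.

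It remains to check that $\left( \varphi ,\psi \right) $ restricts to $\varphi _{0},\psi _{0}$ on the generators, and this computation is the crux of the argument. The two ingredients are that $g$ intertwines the projections, $gp=p_{U}g$ (being a $\Theta $-homomorphism), and that $p_{U},r_{U}$ are idempotent with $p_{U}$ acting as the identity on $U=\mathrm{im}\,p_{U}$ and as zero on $P=\ker p_{U}$. From $r\left( m_{i}\right) \in \ker p$ and $p\left( m_{i}\right) \in \mathrm{im}\,p$ one gets $g\left( r\left( m_{i}\right) \right) =r_{U}g\left( m_{i}\right) $ and $g\left( p\left( m_{i}\right) \right) =p_{U}g\left( m_{i}\right) $; feeding these into the formulas for $\varphi $ and $\psi $ and using idempotency yields $\varphi \left( r\left( m_{i}\right) \right) =r_{U}g\left( m_{i}\right) $ and $\psi \left( p\left( m_{i}\right) \right) =p_{U}g\left( m_{i}\right) $. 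Decomposing $g\left( m_{i}\right) =\varphi _{0}\left( r\left( m_{i}\right) \right) +\psi _{0}\left( p\left( m_{i}\right) \right) $ into its $\ker p_{U}=P$ and $\mathrm{im}\,p_{U}=U$ components then gives $\varphi \left( r\left( m_{i}\right) \right) =\varphi _{0}\left( r\left( m_{i}\right) \right) $ and $\psi \left( p\left( m_{i}\right) \right) =\psi _{0}\left( p\left( m_{i}\right) \right) $, establishing the extension property and hence the freeness of $\left( L,V\right) $.

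I expect the only delicate point to be the bookkeeping in this final verification -- keeping the two copies of the projection-derivation (the one on $F$ and the one on $P\oplus U$) distinct and correctly applying $gp=p_{U}g$ -- rather than any conceptual difficulty; the heavy lifting is done once the category isomorphism of the previous section is in place, since an isomorphism of categories carries free objects to free objects.
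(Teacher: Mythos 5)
Your proposal is correct and follows essentially the same route as the paper's own proof: both transport the generator assignment $m_{i}\mapsto \varphi_{0}\left( r\left( m_{i}\right) \right) +\psi_{0}\left( p\left( m_{i}\right) \right) $ through freeness of $\left( F,p\right) $ in $\Theta $, apply $\mathcal{F}^{-1}$ via Proposition \ref{homomorphismsCorresp}, and then verify on generators using $gp=p_{U}g$ and the decomposition into $\ker p_{U}\oplus \mathrm{im}\,p_{U}$. Your write-up is in fact slightly more explicit than the paper's at the final bookkeeping step (and in checking $X\subset \ker p$), but there is no substantive difference.
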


\begin{proof}
It is clear that $X\subset \ker p$, $Y\subset \mathrm{im}p$. We will
consider an arbitrary $\left( Q,U\right) \in \Xi $. We assume that we have $%
2 $ mappings: 
\begin{equation*}
\varphi :\left\{ r\left( m_{1}\right) ,\ldots ,r\left( m_{n}\right) \right\}
\ni r\left( m_{i}\right) \rightarrow q_{i}\in Q
\end{equation*}%
and 
\begin{equation*}
\psi :\left\{ p\left( m_{1}\right) ,\ldots ,p\left( m_{n}\right) \right\}
\ni p\left( m_{i}\right) \rightarrow u_{i}\in U.
\end{equation*}%
So we have a mapping 
\begin{equation*}
f:\left\{ m_{1},\ldots ,m_{n}\right\} \ni m_{i}\rightarrow
q_{i}+u_{i}=\varphi r\left( m_{i}\right) +\psi p\left( m_{i}\right) \in
Q\oplus U.
\end{equation*}%
Hence, by our assumption about $\left( F,p\right) $, this mapping can be
extended to the homomorphism 
\begin{equation*}
f:\left( F,p\right) \rightarrow \mathcal{F}\left( Q,U\right) =\left( Q\oplus
U,p_{U}\right) .
\end{equation*}%
So there is a homomorphism 
\begin{equation*}
\mathcal{F}^{-1}\left( f\right) =\left( rf\kappa ,pf\iota \right) :\mathcal{F%
}^{-1}\left( F,p\right) =\left( \ker p,\mathrm{im}p\right) \rightarrow
\left( Q,U\right) .
\end{equation*}%
\begin{equation*}
rf\kappa \left( r\left( m_{i}\right) \right) =\left( r\right) ^{2}f\left(
m_{i}\right) =r\left( \varphi r\left( m_{i}\right) +\psi p\left(
m_{i}\right) \right) =\varphi r\left( m_{i}\right) ,
\end{equation*}%
\begin{equation*}
pf\iota \left( p\left( m_{i}\right) \right) =p\left( \varphi r\left(
m_{i}\right) +\psi p\left( m_{i}\right) \right) =\psi p\left( m_{i}\right)
\end{equation*}%
holds for $1\leq i\leq n$, because $\varphi r\left( m_{i}\right) \in Q$, $%
\psi p\left( m_{i}\right) \in U$.
\end{proof}

We will denote 
\begin{equation*}
\Xi ^{\prime }=\left\{ W\left( X,Y\right) \in \mathrm{Ob}\Xi ^{0}\mid
\left\vert X\right\vert =\left\vert Y\right\vert \right\} .
\end{equation*}

\begin{theorem}
\label{revFreeWithProj}If $W=\left( L,V\right) =\left( L\left( x_{1},\ldots
,x_{n}\right) ,\bigoplus\limits_{i=1}^{n}A\left( x_{1},\ldots ,x_{n}\right)
y_{i}\right) \in \Xi ^{\prime }$ then $\mathcal{F}\left( W\right) =\left(
F,p\right) $ is a free Lie algebra with projection-derivation which has $n$
free generators $m_{i}=x_{i}+y_{i}$, $1\leq i\leq n$.
\end{theorem}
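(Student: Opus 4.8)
The plan is to verify directly the universal property that defines a free Lie algebra with projection-derivation, exploiting the fact (established above) that $\mathcal{F}$ and $\mathcal{F}^{-1}$ are mutually inverse isomorphisms of the categories $\Xi$ and $\Theta$. Throughout write $\mathcal{F}(W)=\left( L\oplus V,p_{V}\right) $ and recall $m_{i}=x_{i}+y_{i}$ with $x_{i}\in L$, $y_{i}\in V$. There are two things to check: that $\left\{ m_{1},\ldots ,m_{n}\right\} $ generates $\mathcal{F}(W)$ in the variety $\Theta $ (which yields uniqueness of extensions), and that every map of the $m_{i}$ into an object of $\Theta $ extends to a homomorphism (existence).

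For the generation statement I would first apply the operation $p_{V}$: since $p_{V}\left( m_{i}\right) =y_{i}$ and $m_{i}-p_{V}\left( m_{i}\right) =x_{i}$, both families $\left\{ x_{i}\right\} $ and $\left\{ y_{i}\right\} $ lie in the $\Theta $-subalgebra generated by $\left\{ m_{i}\right\} $. Brackets of the $x_{i}$ reproduce, via \eqref{new_brackets}, the Lie brackets of $L\left( x_{1},\ldots ,x_{n}\right) $, so all of the free Lie algebra $L$ is generated. Moreover \eqref{new_brackets} gives $\left[ l,v\right] _{M}=l\circ v$ for $l\in L$, $v\in V$; hence the iterated brackets $\left[ x_{i_{1}},\left[ x_{i_{2}},\ldots ,\left[ x_{i_{k}},y_{j}\right] \ldots \right] \right] _{M}=x_{i_{1}}x_{i_{2}}\cdots x_{i_{k}}y_{j}$ run over a spanning set of $V=\bigoplus_{j}A\left( x_{1},\ldots ,x_{n}\right) y_{j}$. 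Thus $L\oplus V$ is generated by $\left\{ m_{i}\right\} $, giving uniqueness.

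For existence I would take an arbitrary $\left( M,q\right) \in \Theta $, writing $q$ for its projection-derivation and $r=\mathrm{id}_{M}-q$, together with a prescribed assignment $m_{i}\mapsto n_{i}\in M$. As $q$ is idempotent, $r\left( n_{i}\right) \in \ker q$ and $q\left( n_{i}\right) \in \mathrm{im}\,q$, so I may define maps on the free generators of the representation $W$ by $\varphi :x_{i}\mapsto r\left( n_{i}\right) $ and $\psi :y_{i}\mapsto q\left( n_{i}\right) $; here the hypothesis $W\in \Xi ^{\prime }$, i.e. $\left\vert X\right\vert =\left\vert Y\right\vert =n$, is exactly what makes the pairing $x_{i}\leftrightarrow y_{i}$ legitimate. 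By Definition \ref{freerep}, $\left( \varphi ,\psi \right) $ extends to a homomorphism of representations $\left( \varphi ,\psi \right) :W\rightarrow \left( \ker q,\mathrm{im}\,q\right) =\mathcal{F}^{-1}\left( M,q\right) $. Applying $\mathcal{F}$ and using $\mathcal{FF}^{-1}=id_{\Theta }$, I obtain a $\Theta $-homomorphism $\varphi \oplus \psi :\mathcal{F}(W)\rightarrow \mathcal{FF}^{-1}\left( M,q\right) =\left( M,q\right) $, and $\left( \varphi \oplus \psi \right) \left( m_{i}\right) =\varphi \left( x_{i}\right) +\psi \left( y_{i}\right) =r\left( n_{i}\right) +q\left( n_{i}\right) =n_{i}$. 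Together with the generation statement this proves $\mathcal{F}(W)$ is free on $\left\{ m_{1},\ldots ,m_{n}\right\} $.

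The routine parts are the two spanning computations. The only point needing care is the bookkeeping with the functors: one must be sure the homomorphism produced by $\mathcal{F}$ actually lands in $\left( M,q\right) $ and not merely in an isomorphic copy, which is precisely what $\mathcal{FF}^{-1}=id_{\Theta }$ guarantees. I expect the main (still mild) obstacle to be confirming that the iterated brackets truly exhaust $V$, namely that every monomial $x_{i_{1}}\cdots x_{i_{k}}y_{j}$ arises as a bracket in $M$ and that these span $V$; an alternative that sidesteps this is to invoke Theorem \ref{freeWithProj}, by which the abstract free object $F\left( m_{1},\ldots ,m_{n}\right) $ maps under $\mathcal{F}^{-1}$ to a free representation with $n$ Lie generators and $n$ module generators, hence isomorphic to $W$, and then apply the category isomorphism $\mathcal{F}$.
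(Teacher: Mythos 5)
Your proposal is correct and its core existence argument --- sending $x_{i}\mapsto r\left( n_{i}\right) $, $y_{i}\mapsto q\left( n_{i}\right) $, extending by freeness of $W$ to a homomorphism into $\mathcal{F}^{-1}\left( M,q\right) $, and then applying $\mathcal{F}$ with $\mathcal{FF}^{-1}=id_{\Theta }$ to land in $\left( M,q\right) $ with $m_{i}\mapsto n_{i}$ --- is exactly the paper's proof. The generation/uniqueness verification you add (that the $m_{i}$ generate $L\oplus V$ as a $\Theta $-algebra, via $p\left( m_{i}\right) =y_{i}$, $m_{i}-y_{i}=x_{i}$, and iterated brackets spanning $L$ and $V$) is sound and is a harmless supplement: the paper omits it because its notion of freeness, as in Definition \ref{freerep}, only demands existence of extensions, not uniqueness.
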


\begin{proof}
For $1\leq i\leq n$ we have that $x_{i}\in L$, $y_{i}\in V$, so $%
m_{i}=x_{i}+y_{i}\in F=L\oplus V$. We will consider an arbitrary $\left(
N,p\right) \in \Theta $. We assume that we have a mapping%
\begin{equation*}
f:\left\{ m_{1},\ldots ,m_{n}\right\} \ni m_{i}\rightarrow n_{i}\in N.
\end{equation*}%
We will construct two other mappings%
\begin{equation*}
\varphi :\left\{ x_{1},\ldots ,x_{n}\right\} \ni x_{i}\rightarrow r\left(
n_{i}\right) \in \ker p\subset N
\end{equation*}%
and%
\begin{equation*}
\psi :\left\{ y_{1},\ldots ,y_{n}\right\} \ni y_{i}\rightarrow p\left(
n_{i}\right) \in \mathrm{im}p\subset N.
\end{equation*}%
By our assumption about $\left( L,V\right) $, these mappings can be extended
to the homomorphism%
\begin{equation*}
\left( \varphi ,\psi \right) :\left( L,V\right) \rightarrow \mathcal{F}%
^{-1}\left( N,p\right) =\left( \ker p,\mathrm{im}p\right) .
\end{equation*}%
So there is a homomorphism 
\begin{equation*}
\mathcal{F}\left( \varphi ,\psi \right) =\left( \varphi \oplus \psi \right) :%
\mathcal{F}\left( L,V\right) =\left( F,p\right) \rightarrow \left(
N,p\right) .
\end{equation*}%
\begin{equation*}
\left( \varphi \oplus \psi \right) \left( m_{i}\right) =\left( \varphi
\oplus \psi \right) \left( x_{i}+y_{i}\right) =\varphi \left( x_{i}\right)
+\psi \left( y_{i}\right) =
\end{equation*}%
\begin{equation*}
r\left( n_{i}\right) +p\left( n_{i}\right) =n_{i}=f\left( m_{i}\right)
\end{equation*}%
holds for $1\leq i\leq n$, because $x_{i}\in L$, $y_{i}\in V$.
\end{proof}

\section{Automorphisms of the category $\Xi ^{0}$ and of the category $%
\Theta ^{0}$.\label{From2Autom_to_1Autom}}

\setcounter{equation}{0}

If we have a category $\mathfrak{K}$, which objects are universal algebras
and morphisms are homomorphism, then automorphism $\Phi $ of this category
transform the homomorphism $id_{A}\in \mathrm{Mor}\mathfrak{K}$, where $A\in 
\mathrm{Ob}\mathfrak{K}$, to homomorphism $id_{\Phi \left( A\right) }$,
because homomorphism $id_{A}$ uniquely defined by its "algebraic" property: $%
id_{A}$ is a unit of the semigroup $\mathrm{End}A$. Therefore if $A,B\in 
\mathrm{Ob}\mathfrak{K}$, $A\cong B$, $\Phi \in \mathrm{Aut}\mathfrak{K}$
then $\Phi \left( A\right) \cong \Phi \left( B\right) $.

\begin{theorem}
The category $\Xi ^{0}$ has $2$-sorted IBN propriety: if $W\left(
X_{1},Y_{1}\right) $, $W\left( X_{2},Y_{2}\right) \in \mathrm{Ob}\Xi ^{0}$
and $W\left( X_{1},Y_{1}\right) \cong W\left( X_{2},Y_{2}\right) $, then $%
\left\vert X_{1}\right\vert =\left\vert X_{2}\right\vert $ and $\left\vert
Y_{1}\right\vert =\left\vert Y_{2}\right\vert $.
\end{theorem}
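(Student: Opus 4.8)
The plan is to recover the two cardinals $|X|$ and $|Y|$ from the isomorphism type of $W(X,Y)=(L(X),A(X)Y)$ by exhibiting each of them as the $k$-dimension of a quotient vector space that is manifestly intrinsic to the representation, and hence preserved by every isomorphism. First I record what an isomorphism in $\Xi^{0}$ looks like: an isomorphism between $W(X_{1},Y_{1})$ and $W(X_{2},Y_{2})$ is a pair $(\varphi,\psi)$ in which $\varphi:L(X_{1})\rightarrow L(X_{2})$ is an isomorphism of Lie algebras, $\psi:A(X_{1})Y_{1}\rightarrow A(X_{2})Y_{2}$ is a linear isomorphism, and the compatibility \eqref{homomorphism_formula} holds; its two-sided inverse is necessarily $(\varphi^{-1},\psi^{-1})$. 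So the two sorts decouple into a Lie-algebra datum and a module datum, and I treat them separately.

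To recover $|X|$ I would pass to the abelianization of the free Lie algebra. Since $L(X)$ is graded with degree-one component $kX$, and $[L(X),L(X)]$ is exactly the sum of the homogeneous components of degree $\geq 2$, the quotient $L(X)/[L(X),L(X)]$ is isomorphic to $kX$ and therefore has $k$-dimension $|X|$. The Lie isomorphism $\varphi$ carries $[L(X_{1}),L(X_{1})]$ onto $[L(X_{2}),L(X_{2})]$, hence induces a linear isomorphism of the two abelianizations, which gives $|X_{1}|=|X_{2}|$.

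To recover $|Y|$ I would study the subspace $V_{0}=\mathrm{span}_{k}\{\,l\circ v : l\in L(X),\ v\in A(X)Y\,\}$ of $V=A(X)Y$. The essential computational step is the identity $V_{0}=I\cdot A(X)Y$, where $I$ is the augmentation ideal of $A(X)$, that is, the kernel of the algebra map $A(X)\rightarrow k$ sending each $x\in X$ to $0$. The inclusion $V_{0}\subseteq I\cdot A(X)Y$ holds because $L(X)\subseteq I$; for the reverse inclusion one uses $I=A(X)X$ and factors any monomial action $a\,x\,v$ with $x\in X\subseteq L(X)$ as $x_{i_{1}}\circ(\cdots)$, landing it in $V_{0}$. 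Granting this, $V/V_{0}=\bigoplus_{y\in Y}(A(X)/I)\,y\cong k^{|Y|}$, so $\dim_{k}(V/V_{0})=|Y|$. Finally, since $\psi$ is a linear surjection with $\psi(l\circ v)=\varphi(l)\circ\psi(v)$ and both $\varphi,\psi$ are surjective, it carries the subspace $V_{0}$ of $W_{1}$ onto the subspace $V_{0}$ of $W_{2}$, and therefore induces a linear isomorphism of the quotients, forcing $|Y_{1}|=|Y_{2}|$.

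The only non-formal point is the identity $V_{0}=I\cdot A(X)Y$, namely that the plain $k$-span of single Lie-actions already exhausts the whole augmentation-ideal submodule; this is where I expect the (otherwise routine) work to lie, and it rests on $I=A(X)X$ together with the associativity factorization above. The abelianization statement, the fact $A(X)/I\cong k$, and the preservation of both $V_{0}$ and the commutator ideal under $(\varphi,\psi)$ are all immediate, so the theorem follows by combining the two dimension counts.
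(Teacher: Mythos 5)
Your proof is correct, and while it computes the same two numerical invariants as the paper, the justification of the second one follows a genuinely different route. For $\left\vert X\right\vert $ the two arguments coincide: both recover it as $\dim _{k}L\left( X\right) /\left[ L\left( X\right) ,L\left( X\right) \right] $, which any Lie algebra isomorphism preserves. For $\left\vert Y\right\vert $, note first that your augmentation ideal $I$ is exactly the paper's two-sided ideal $\left\langle X\right\rangle =\left\langle L\right\rangle $, so your quotient $V/IV$ is literally the paper's $V/\left\langle L\right\rangle V$; the difference lies in how invariance under the isomorphism $\left( \varphi ,\psi \right) $ is established. The paper proves $\psi \left( \left\langle L_{1}\right\rangle V_{1}\right) =\left\langle L_{2}\right\rangle V_{2}$ by invoking that $A\left( X\right) $ is the universal enveloping algebra of $L\left( X\right) $: the Lie isomorphism $\varphi $ extends to an isomorphism of associative algebras with unit $A\left( X_{1}\right) \rightarrow A\left( X_{2}\right) $, which makes $\psi $ an isomorphism of $A\left( X_{1}\right) $-modules. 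You avoid this input entirely by describing the submodule intrinsically as the $k$-span $V_{0}$ of single actions $l\circ v$; its preservation is then tautological from $\psi \left( l\circ v\right) =\varphi \left( l\right) \circ \psi \left( v\right) $ and surjectivity of $\varphi $ and $\psi $. The work is shifted into your lemma $V_{0}=IV$, which does hold and for exactly the reason you give: $I=A\left( X\right) X$ since every monomial of positive degree ends in a letter, and then $x_{i_{1}}x_{i_{2}}\cdots x_{i_{n}}v=x_{i_{1}}\circ \left( x_{i_{2}}\cdots x_{i_{n}}v\right) $ with $x_{i_{1}}\in X\subset L\left( X\right) $ puts each generator of $IV$ inside $V_{0}$, while $L\left( X\right) \subseteq I$ gives the converse. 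Your route buys elementarity: no appeal to the enveloping-algebra property of the free associative algebra (essentially a PBW-type fact) is needed, only the homomorphism identity and a factorization of monomials. The paper's route buys a stronger structural conclusion along the way, namely that $\psi $ is an isomorphism of modules over the extended $\varphi $, which is more than the dimension count requires. Both arguments are complete.
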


\begin{proof}
We consider $W\left( X,Y\right) =\left( L\left( X\right) ,A\left( X\right)
Y\right) =\left( L,V\right) \in \mathrm{Ob}\Xi ^{0}$. $L/L^{2}$ is a $k$%
-linear space and $\dim L/L^{2}=\left\vert X\right\vert $.

In the associative algebra $A\left( X\right) $ we will consider $%
\left\langle L\right\rangle $ - two-sided ideal generated by the set $%
L=L\left( X\right) \subset A\left( X\right) $. This ideal coincide with $%
\left\langle X\right\rangle $ - two-sided ideal generated by the set $X$,
because $L$ is a subset of the associative subalgebra without unit,
generated by the set $X$. In the $A\left( X\right) $-module $V=A\left(
X\right) Y$ we will consider submodule $\left\langle L\right\rangle V=%
\mathrm{Span}_{k}\left( av\mid a\in \left\langle L\right\rangle ,v\in
V\right) =\left\langle X\right\rangle V$. $\dim V/\left\langle
X\right\rangle V=\dim V/\left\langle L\right\rangle V=\left\vert
Y\right\vert $.

We assume that we have two objects of $\Xi ^{0}$: $W\left(
X_{1},Y_{1}\right) =\left( L\left( X_{1}\right) ,A\left( X_{1}\right)
Y_{1}\right) =\left( L_{1},V_{1}\right) $ and $W\left( X_{2},Y_{2}\right)
=\left( L\left( X_{2}\right) ,A\left( X_{2}\right) Y_{2}\right) =\left(
L_{2},V_{2}\right) $ - and there is an isomorphism $\left( \varphi ,\psi
\right) :\left( L_{1},V_{1}\right) \rightarrow \left( L_{2},V_{2}\right) $.
It means that $\varphi :$ $L_{1}\rightarrow L_{2}$ is an isomorphism and $%
L_{1}/L_{1}^{2}\cong L_{2}/L_{2}^{2}$, so $\left\vert X_{1}\right\vert
=\left\vert X_{2}\right\vert $.

By (\ref{homomorphism_formula}) we have that $\psi :V_{1}\rightarrow V_{2}$
is an isomorphism of the $L_{1}$-modules, when acting of $L_{1}$ over $V_{2}$
defined by $l\circ v=\varphi \left( l\right) v$, where $l\in L_{1}$, $v\in
V_{2}$. $A\left( X_{1}\right) $ and $A\left( X_{2}\right) $ are universal
enveloped algebras of $L_{1}$ and $L_{2}$ respectively. Therefore the
isomorphism $\varphi :$ $L_{1}\rightarrow L_{2}$ can be extended to the
isomorphism of algebras with unit $\varphi :A\left( X_{1}\right) \rightarrow
A\left( X_{2}\right) $. $A\left( X_{1}\right) $ is generated as algebra with
unit by elements of $L_{1}$, so $\psi $ is also an isomorphism of the $%
A\left( X_{1}\right) $-modules. Therefore there is an isomorphism of $%
A\left( X_{1}\right) $-modules $V_{1}/\left\langle L_{1}\right\rangle
V_{1}\cong V_{2}/\left\langle L_{2}\right\rangle V_{2}$, because $\psi
\left( \left\langle L_{1}\right\rangle V_{1}\right) =\left\langle
L_{2}\right\rangle V_{2}$. So $\dim V_{1}/\left\langle L_{1}\right\rangle
V_{1}=\dim V_{2}/\left\langle L_{2}\right\rangle V_{2}$ and $\left\vert
Y_{1}\right\vert =\left\vert Y_{2}\right\vert $.
\end{proof}

By this Theorem we have that if $W\left( X_{1},Y_{1}\right) ,W\left(
X_{2},Y_{2}\right) \in \mathrm{Ob}\Xi ^{0}$, $\left\vert X_{1}\right\vert
=\left\vert X_{2}\right\vert $, $\left\vert Y_{1}\right\vert =\left\vert
Y_{2}\right\vert $, $\Phi \in \mathrm{Aut}\Xi ^{0}$, $\Phi \left( W\left(
X_{1},Y_{1}\right) \right) =W\left( X_{3},Y_{3}\right) $, $\Phi \left(
W\left( X_{2},Y_{2}\right) \right) =W\left( X_{4},Y_{4}\right) $ then $%
\left\vert X_{3}\right\vert =\left\vert X_{4}\right\vert $, $\left\vert
Y_{3}\right\vert =\left\vert Y_{4}\right\vert $.

This is a well-known

\begin{definition}
We consider a category $\mathfrak{K}$ and the family of objects $\left\{
A_{i}\right\} _{i\in I}\subseteq \mathrm{Ob}\mathfrak{K}$. The pair $\left(
Q\in \mathrm{Ob}\mathfrak{K,}\left\{ \eta _{i}:A_{i}\rightarrow Q\right\}
_{i\in I}\subseteq \mathrm{Mor}\mathfrak{K}\right) $\ called \textbf{%
coproduct} of $\left\{ A_{i}\right\} _{i\in I}$ if for every $B\in \mathrm{Ob%
}\mathfrak{K}$ and every $\left\{ \alpha _{i}:A_{i}\rightarrow B\right\}
_{i\in I}\subseteq \mathrm{Mor}\mathfrak{K}$ there exists only one $\alpha
:Q\rightarrow B\in \mathrm{Mor}\mathfrak{K}$ such that $\alpha _{i}=\alpha
\eta _{i}$.
\end{definition}

We denote $Q=\coprod\limits_{i\in I}A_{i}$. It is clear that if $%
Q=\coprod\limits_{i\in I}A_{i}$, $\Phi \in \mathrm{Aut}\mathfrak{K}$ then $%
\Phi \left( Q\right) =\coprod\limits_{i\in I}\Phi \left( A_{i}\right) $.

It is easy to check that if $W\left( X_{1},Y_{1}\right) ,W\left(
X_{2},Y_{2}\right) \in \mathrm{Ob}\Xi ^{0}$ then $W\left( X_{1},Y_{1}\right)
\sqcup W\left( X_{2},Y_{2}\right) =W\left( X_{3},Y_{3}\right) $, where $%
\left\vert X_{3}\right\vert =\left\vert X_{1}\right\vert +\left\vert
X_{2}\right\vert $, $\left\vert Y_{3}\right\vert =\left\vert
Y_{1}\right\vert +\left\vert Y_{2}\right\vert $. Of course, homomorphisms $%
\eta _{i}:W\left( X_{i},Y_{i}\right) \rightarrow W\left( X_{3},Y_{3}\right) $%
, $i=1,2$, must by constructed by suitable way.

Similar to \cite{PlotkinVovsi} we define

\begin{definition}
We say that the free representation $W\left( X,Y\right) $ is a \textbf{cyclic%
} if $X=\left\{ x\right\} $, $Y=\left\{ y\right\} $.
\end{definition}

\begin{proposition}
\label{cyclic_invariant}If $\left( L,V\right) \in \mathrm{Ob}\Xi ^{0}$ is a
cyclic representation, $\Phi \in \mathrm{Aut}\Xi ^{0}$ then $\Phi \left(
L,V\right) $ is also a cyclic representation.
\end{proposition}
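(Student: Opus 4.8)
The plan is to reduce the statement to an elementary fact about automorphisms of the additive monoid $\mathbb{N}\times \mathbb{N}$. By the $2$-sorted IBN property proved above, to each object $W\left( X,Y\right) \in \mathrm{Ob}\Xi ^{0}$ we may attach the invariant pair $\left( \left\vert X\right\vert ,\left\vert Y\right\vert \right) \in \mathbb{N}\times \mathbb{N}$, and this pair is an isomorphism invariant. Since an automorphism carries isomorphic objects to isomorphic objects, the remark following the IBN theorem shows that $\Phi $ induces a well-defined map $\sigma :\mathbb{N}\times \mathbb{N}\rightarrow \mathbb{N}\times \mathbb{N}$ by setting $\sigma \left( \left\vert X\right\vert ,\left\vert Y\right\vert \right) =\left( \left\vert X^{\prime }\right\vert ,\left\vert Y^{\prime }\right\vert \right) $ whenever $\Phi \left( W\left( X,Y\right) \right) =W\left( X^{\prime },Y^{\prime }\right) $.

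First I would check that $\sigma $ is an automorphism of the monoid $\left( \mathbb{N}\times \mathbb{N},+\right) $. The coproduct formula $W\left( X_{1},Y_{1}\right) \sqcup W\left( X_{2},Y_{2}\right) =W\left( X_{3},Y_{3}\right) $ with $\left\vert X_{3}\right\vert =\left\vert X_{1}\right\vert +\left\vert X_{2}\right\vert $, $\left\vert Y_{3}\right\vert =\left\vert Y_{1}\right\vert +\left\vert Y_{2}\right\vert $ says precisely that the coproduct corresponds to coordinatewise addition of the invariant pairs. Because automorphisms preserve coproducts, $\Phi \left( \coprod A_{i}\right) =\coprod \Phi \left( A_{i}\right) $, so applying $\Phi $ and reading off invariants gives $\sigma \left( a+b\right) =\sigma \left( a\right) +\sigma \left( b\right) $; since $\mathbb{N}\times \mathbb{N}$ is cancellative this forces $\sigma \left( 0,0\right) =\left( 0,0\right) $ (equivalently, $\Phi $ fixes the initial object $W\left( \varnothing ,\varnothing \right) $, the empty coproduct). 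Running the same argument for $\Phi ^{-1}$ produces a two-sided inverse of $\sigma $, so $\sigma $ is indeed an automorphism of the monoid $\mathbb{N}\times \mathbb{N}$.

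Next I would identify the atoms of $\mathbb{N}\times \mathbb{N}$, that is, the nonzero elements that cannot be written as a sum of two nonzero elements. A short check shows these are exactly $\left( 1,0\right) $ and $\left( 0,1\right) $, while every other nonzero element is such a sum; in particular $\left( 1,1\right) =\left( 1,0\right) +\left( 0,1\right) $. A monoid automorphism carries the set of atoms bijectively onto itself, so $\sigma $ either fixes or interchanges $\left( 1,0\right) $ and $\left( 0,1\right) $. In either case $\sigma \left( 1,1\right) =\sigma \left( 1,0\right) +\sigma \left( 0,1\right) =\left( 1,0\right) +\left( 0,1\right) =\left( 1,1\right) $.

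Finally, a cyclic representation is by definition an object whose two generating sets are singletons, i.e. an object with invariant pair $\left( 1,1\right) $. Hence if $\left( L,V\right) $ is cyclic then $\Phi \left( L,V\right) =W\left( X^{\prime },Y^{\prime }\right) $ satisfies $\left( \left\vert X^{\prime }\right\vert ,\left\vert Y^{\prime }\right\vert \right) =\sigma \left( 1,1\right) =\left( 1,1\right) $, so $X^{\prime }$ and $Y^{\prime }$ are singletons and $\Phi \left( L,V\right) $ is cyclic. The only point demanding care is the setup of $\sigma $ itself, namely that the coproduct genuinely translates into addition on the invariant and that IBN makes the invariant $\Phi $-equivariant; these are supplied by the results already proved, and the atom computation that does the real work is entirely routine.
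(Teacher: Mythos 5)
Your proof is correct, and it rests on the same three ingredients as the paper's: the remark following the $2$-sorted IBN theorem (well-definedness of the cardinality invariant), the coproduct formula for objects of $\Xi ^{0}$, and the fact that a category automorphism preserves coproducts. The difference is in how the images of the one-generator objects are pinned down. The paper argues concretely: it applies $\Phi $ to the coproduct decompositions of the $\Phi $-preimages of $\left( kx,\left\{ 0\right\} \right) $ and $\left( \left\{ 0\right\} ,ky\right) $, obtains the system (\ref{eqv1})--(\ref{eqv4}) of Diophantine equations over $\mathbb{N}$, and solves it by hand, which yields the two cases (the types $\left( 1,0\right) $ and $\left( 0,1\right) $ are either both fixed or interchanged) and, in each case, an explicit description of $\Phi \left( kx,ky\right) $ as a coproduct. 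You instead encode everything in a single map $\sigma $ of the monoid $\left( \mathbb{N}\times \mathbb{N},+\right) $, verify that it is a monoid automorphism (using $\Phi ^{-1}$ exactly where the paper uses the preimage objects $W\left( X_{3},Y_{3}\right) $, $W\left( X_{4},Y_{4}\right) $), and invoke the standard fact that a monoid automorphism permutes atoms; since the atoms of $\mathbb{N}\times \mathbb{N}$ are exactly $\left( 1,0\right) $ and $\left( 0,1\right) $, the fix-or-swap dichotomy comes for free and the case distinction evaporates in $\sigma \left( 1,1\right) =\left( 1,1\right) $. What your packaging buys: the explicit Diophantine computation disappears, and additivity of $\sigma $ immediately gives $\sigma \left( n,n\right) =\left( n,n\right) $ and $\sigma \left( 0,0\right) =\left( 0,0\right) $, which is precisely the content of Corollary \ref{X_Y_equal_invariant}, so that statement too would follow at once from your setup. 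What the paper's computation buys: it exhibits the two possible actions of $\Phi $ on the one-generator objects explicitly, although that extra information is not used later in the paper.
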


\begin{proof}
We consider arbitrary $\Phi \in \mathrm{Aut}\Xi ^{0}$. We take $x\in X^{0}$, 
$y\in Y^{0}$. $\Phi \left( kx,\left\{ 0\right\} \right) =W\left(
X_{1},Y_{1}\right) $, $\Phi \left( \left\{ 0\right\} ,ky\right) =W\left(
X_{2},Y_{2}\right) $. Also there exist $W\left( X_{3},Y_{3}\right) $, $%
W\left( X_{4},Y_{4}\right) \in \mathrm{Ob}\Xi ^{0}$, such that $\Phi \left(
W\left( X_{3},Y_{3}\right) \right) =\left( kx,\left\{ 0\right\} \right) $, $%
\Phi \left( W\left( X_{4},Y_{4}\right) \right) =\left( \left\{ 0\right\}
,ky\right) $. We denote $\left\vert X_{i}\right\vert =a_{i}$, $\left\vert
Y_{i}\right\vert =b_{i}$, $i=1,\ldots ,4$. We have that $W\left(
X_{3},\varnothing \right) =\coprod\limits_{i=1}^{a_{3}}\left( kx_{i},\left\{
0\right\} \right) $, $W\left( \varnothing ,Y_{3}\right)
=\coprod\limits_{i=1}^{b_{3}}\left( \left\{ 0\right\} ,ky_{i}\right) $,
where $x_{i}\in X^{0}$, $y_{i}\in Y^{0}$. 
\begin{equation*}
W\left( X_{3},Y_{3}\right) =W\left( X_{3},\varnothing \right) \sqcup W\left(
\varnothing ,Y_{3}\right) =\left( \coprod\limits_{i=1}^{a_{3}}\left(
kx_{i},\left\{ 0\right\} \right) \right) \sqcup \left(
\coprod\limits_{i=1}^{b_{3}}\left( \left\{ 0\right\} ,ky_{i}\right) \right) ,
\end{equation*}%
\begin{equation*}
\Phi \left( W\left( X_{3},Y_{3}\right) \right) =\left(
\coprod\limits_{i=1}^{a_{3}}\Phi \left( kx_{i},\left\{ 0\right\} \right)
\right) \sqcup \left( \coprod\limits_{i=1}^{b_{3}}\Phi \left( \left\{
0\right\} ,ky_{i}\right) \right) .
\end{equation*}%
$\Phi \left( kx_{i},\left\{ 0\right\} \right) \cong W\left(
X_{1},Y_{1}\right) $, $\Phi \left( \left\{ 0\right\} ,ky_{i}\right) \cong
W\left( X_{2},Y_{2}\right) $, so $\coprod\limits_{i=1}^{a_{3}}\Phi \left(
kx_{i},\left\{ 0\right\} \right) =W\left( X_{5},Y_{5}\right) $, such that $%
\left\vert X_{5}\right\vert =a_{3}a_{1}$, $\left\vert Y_{5}\right\vert
=a_{3}b_{1}$ and $\coprod\limits_{i=1}^{b_{3}}\Phi \left( \left\{ 0\right\}
,ky_{i}\right) =W\left( X_{6},Y_{6}\right) $, such that $\left\vert
X_{6}\right\vert =b_{3}a_{2}$, $\left\vert Y_{6}\right\vert =b_{3}b_{2}$.
Therefore $\Phi \left( W\left( X_{3},Y_{3}\right) \right) =W\left(
X_{7},Y_{7}\right) $, where $\left\vert X_{7}\right\vert
=a_{3}a_{1}+b_{3}a_{2}$, $\left\vert Y_{7}\right\vert =a_{3}b_{1}+b_{3}b_{2}$%
. But $\Phi \left( W\left( X_{3},Y_{3}\right) \right) =\left( kx,\left\{
0\right\} \right) $, therefore%
\begin{equation}
a_{3}a_{1}+b_{3}a_{2}=1  \label{eqv1}
\end{equation}%
\begin{equation}
a_{3}b_{1}+b_{3}b_{2}=0.  \label{eqv2}
\end{equation}%
By consideration of $\Phi \left( W\left( X_{4},Y_{4}\right) \right) $ we
conclude by similar way, that%
\begin{equation}
a_{4}a_{1}+b_{4}a_{2}=0  \label{eqv3}
\end{equation}%
\begin{equation}
a_{4}b_{1}+b_{4}b_{2}=1  \label{eqv4}
\end{equation}%
$a_{i},b_{i}\in 
%TCIMACRO{\U{2115} }%
%BeginExpansion
\mathbb{N}
%EndExpansion
$, so from (\ref{eqv2}) we conclude $a_{3}b_{1}=0$ and $b_{3}b_{2}=0$. If $%
a_{3}=0$, then by (\ref{eqv1}) we have $b_{3}a_{2}=1$ and $b_{3}=1$, $%
a_{2}=1 $. So $b_{2}=0$ and from (\ref{eqv4}) $a_{4}=1$, $b_{1}=1$. After
this we conclude from (\ref{eqv3}) that $a_{1}=0$, $b_{4}=0$. In this case $%
\Phi \left( kx,\left\{ 0\right\} \right) =\left( \left\{ 0\right\}
,ky^{\prime }\right) $, $\Phi \left( \left\{ 0\right\} ,ky\right) =\left(
kx^{\prime },\left\{ 0\right\} \right) $, where $x^{\prime }\in X^{0}$, $%
y^{\prime }\in Y^{0}$. So%
\begin{equation*}
\Phi \left( kx,ky\right) =\Phi \left( \left( kx,\left\{ 0\right\} \right)
\sqcup \left( \left\{ 0\right\} ,ky\right) \right) =\Phi \left( kx,\left\{
0\right\} \right) \sqcup \Phi \left( \left\{ 0\right\} ,ky\right) =
\end{equation*}%
\begin{equation*}
\left( \left\{ 0\right\} ,ky^{\prime }\right) \sqcup \left( kx^{\prime
},\left\{ 0\right\} \right) =\left( kx^{\prime },k\left[ x^{\prime }\right]
y^{\prime }\right) .
\end{equation*}

Now we will resolve the system (\ref{eqv1}) - (\ref{eqv4}) in the case $%
b_{1}=0$. In this case we conclude from (\ref{eqv4}) that $b_{4}=1$, $%
b_{2}=1 $. From (\ref{eqv3}) we conclude that $a_{2}=0$, from (\ref{eqv2}) - 
$b_{3}=0 $. After this we conclude from (\ref{eqv1}) that $a_{3}=1$, $%
a_{1}=1 $. So by (\ref{eqv3}) we have $a_{4}=0$. In this case $\Phi \left(
kx,\left\{ 0\right\} \right) =\left( kx^{\prime \prime },\left\{ 0\right\}
\right) $, $\Phi \left( \left\{ 0\right\} ,ky\right) =\left( \left\{
0\right\} ,ky^{\prime \prime }\right) $, where $x^{\prime \prime }\in X^{0}$%
, $y^{\prime \prime }\in Y^{0}$. So%
\begin{equation*}
\Phi \left( kx,ky\right) =\Phi \left( \left( kx,\left\{ 0\right\} \right)
\sqcup \left( \left\{ 0\right\} ,ky\right) \right) =\Phi \left( kx,\left\{
0\right\} \right) \sqcup \Phi \left( \left\{ 0\right\} ,ky\right) =
\end{equation*}%
\begin{equation*}
\left( kx^{\prime \prime },\left\{ 0\right\} \right) \sqcup \left( \left\{
0\right\} ,ky^{\prime \prime }\right) =\left( kx^{\prime \prime },k\left[
x^{\prime \prime }\right] y^{\prime \prime }\right) .
\end{equation*}
\end{proof}

\begin{corollary}
\label{X_Y_equal_invariant}If $\Phi \in \mathrm{Aut}\Xi ^{0}$, then $\Phi
\left( \Xi ^{\prime }\right) =\Xi ^{\prime }$.
\end{corollary}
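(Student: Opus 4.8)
The plan is to reduce the general case to the cyclic case already settled in Proposition~\ref{cyclic_invariant}, exploiting that every object of $\Xi^{\prime}$ is a coproduct of cyclic representations and that automorphisms commute with coproducts.

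First I would take an arbitrary $W\left( X,Y\right) \in \Xi^{\prime }$, so that $\left\vert X\right\vert =\left\vert Y\right\vert =n$ for some $n\in \mathbb{N}$. Writing $X=\left\{ x_{1},\ldots ,x_{n}\right\}$ and $Y=\left\{ y_{1},\ldots ,y_{n}\right\}$, I would use the coproduct cardinality rule recorded above (the coproduct adds the cardinalities of both sorts of generators) to exhibit $W\left( X,Y\right)$ as a coproduct of $n$ cyclic representations:
\[
W\left( X,Y\right) =\coprod\limits_{i=1}^{n}\left( kx_{i},k\left[ x_{i}\right] y_{i}\right) ,
\]
each factor $\left( kx_{i},k\left[ x_{i}\right] y_{i}\right) =W\left( \left\{ x_{i}\right\} ,\left\{ y_{i}\right\} \right)$ being cyclic.

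Next, since $\Phi \in \mathrm{Aut}\,\Xi ^{0}$ and automorphisms preserve coproducts (as noted just after the definition of coproduct), I would apply $\Phi$ termwise:
\[
\Phi \left( W\left( X,Y\right) \right) =\coprod\limits_{i=1}^{n}\Phi \left( W\left( \left\{ x_{i}\right\} ,\left\{ y_{i}\right\} \right) \right) .
\]
By Proposition~\ref{cyclic_invariant} each factor $\Phi \left( W\left( \left\{ x_{i}\right\} ,\left\{ y_{i}\right\} \right) \right)$ is again cyclic, hence equal to some $W\left( X_{i}^{\prime },Y_{i}^{\prime }\right)$ with $\left\vert X_{i}^{\prime }\right\vert =\left\vert Y_{i}^{\prime }\right\vert =1$. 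Invoking the coproduct cardinality rule once more, $\Phi \left( W\left( X,Y\right) \right) =W\left( X^{\prime },Y^{\prime }\right)$ with $\left\vert X^{\prime }\right\vert =\sum_{i}\left\vert X_{i}^{\prime }\right\vert =n=\sum_{i}\left\vert Y_{i}^{\prime }\right\vert =\left\vert Y^{\prime }\right\vert$, so $\Phi \left( W\left( X,Y\right) \right) \in \Xi ^{\prime }$. This proves $\Phi \left( \Xi ^{\prime }\right) \subseteq \Xi ^{\prime }$.

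Finally, to upgrade this inclusion to the asserted equality, I would run exactly the same argument with $\Phi ^{-1}$ in place of $\Phi$ (it too is an automorphism of $\Xi ^{0}$), obtaining $\Phi ^{-1}\left( \Xi ^{\prime }\right) \subseteq \Xi ^{\prime }$, equivalently $\Xi ^{\prime }\subseteq \Phi \left( \Xi ^{\prime }\right)$; combining the two inclusions yields $\Phi \left( \Xi ^{\prime }\right) =\Xi ^{\prime }$. I do not expect a genuine obstacle here, since all the real work lies in Proposition~\ref{cyclic_invariant}. The only points requiring minor care are verifying the coproduct decomposition of $W\left( X,Y\right)$ into cyclic pieces and remembering that the $\Phi ^{-1}$ step is what delivers surjectivity, not merely invariance of each individual object.
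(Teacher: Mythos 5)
Your argument for the case $\left\vert X\right\vert =\left\vert Y\right\vert =n\geq 1$ is exactly the paper's own proof: decompose $W\left( X,Y\right) =\coprod\limits_{i=1}^{n}\left( kx_{i},k\left[ x_{i}\right] y_{i}\right) $, use that automorphisms commute with coproducts, apply Proposition \ref{cyclic_invariant} to each cyclic factor, and add up the cardinalities of the generating sets. Your explicit step with $\Phi ^{-1}$ to upgrade the inclusion $\Phi \left( \Xi ^{\prime }\right) \subseteq \Xi ^{\prime }$ to the asserted equality is also correct, and is in fact a point the paper leaves implicit.

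However, there is a gap: $\Xi ^{\prime }$ also contains the object $W\left( \varnothing ,\varnothing \right) =\left( \left\{ 0\right\} ,\left\{ 0\right\} \right) $, for which $\left\vert X\right\vert =\left\vert Y\right\vert =0$, and for this object your decomposition into cyclic factors is empty, so Proposition \ref{cyclic_invariant} and the binary coproduct cardinality rule give you nothing. The paper devotes a separate argument precisely to this case: writing $\Phi \left( \left\{ 0\right\} ,\left\{ 0\right\} \right) =W\left( X,Y\right) $ with $\left\vert X\right\vert =a$, $\left\vert Y\right\vert =b$, the idempotence $\left( \left\{ 0\right\} ,\left\{ 0\right\} \right) \sqcup \left( \left\{ 0\right\} ,\left\{ 0\right\} \right) =\left( \left\{ 0\right\} ,\left\{ 0\right\} \right) $ yields $a=2a$ and $b=2b$, hence $a=b=0$. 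Alternatively you could argue that the empty coproduct is the initial object, that an automorphism carries an initial object to an initial object, and that by the $2$-sorted IBN property $\left( \left\{ 0\right\} ,\left\{ 0\right\} \right) $ is the only object of $\Xi ^{0}$ in its isomorphism class; but some such argument must be supplied, since as written your proof does not cover all of $\Xi ^{\prime }$.
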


\begin{proof}
\begin{equation*}
\left( L\left( x_{1},\ldots ,x_{n}\right) ,\bigoplus\limits_{i=1}^{n}A\left(
x_{1},\ldots ,x_{n}\right) y_{i}\right) =\coprod\limits_{i=1}^{n}\left(
kx_{i},k\left[ x_{i}\right] y_{i}\right) ,
\end{equation*}%
where $\left\{ x_{1},\ldots ,x_{n}\right\} \subset X_{0}$, $\left\{
y_{1},\ldots ,y_{n}\right\} \subset Y_{0}$, fulfills, so if $W\left(
X,Y\right) \in \mathrm{Ob}\Xi ^{0}$ and $\left\vert X\right\vert =\left\vert
Y\right\vert =n>0$ then $\Phi \left( W\left( X,Y\right) \right) =W\left(
X^{\prime },Y^{\prime }\right) $ and $\left\vert X^{\prime }\right\vert
=\left\vert Y^{\prime }\right\vert =n$.

Now we need only consider the case $X=Y=\varnothing $. We assume that $\Phi
\left( \left\{ 0\right\} ,\left\{ 0\right\} \right) =W\left( X,Y\right) $,
where $\left\vert X\right\vert =a$, $\left\vert Y\right\vert =b$. 
\begin{equation*}
\left( \left\{ 0\right\} ,\left\{ 0\right\} \right) \sqcup \left( \left\{
0\right\} ,\left\{ 0\right\} \right) =\left( \left\{ 0\right\} ,\left\{
0\right\} \right)
\end{equation*}%
so%
\begin{equation*}
\Phi \left( \left\{ 0\right\} ,\left\{ 0\right\} \right) =\Phi \left(
\left\{ 0\right\} ,\left\{ 0\right\} \right) \sqcup \Phi \left( \left\{
0\right\} ,\left\{ 0\right\} \right) =W\left( X,Y\right) \sqcup W\left(
X,Y\right) =W\left( X_{1},Y_{1}\right) ,
\end{equation*}%
where $\left\vert X_{1}\right\vert =2a$, $\left\vert Y_{1}\right\vert =2b$.
So $a=2a$ and $b=2b$, hence $a=b=0$ and $\Phi \left( \left\{ 0\right\}
,\left\{ 0\right\} \right) =\left( \left\{ 0\right\} ,\left\{ 0\right\}
\right) $.
\end{proof}

\setcounter{corollary}{0}

In the category $\Theta $ we can consider subcategory $\Theta ^{0}$. We take
a infinite countable sets of symbols $M^{0}$. The objects of $\Theta ^{0}$
will be the free algebras in $\Theta $ with the set of free generators $M$
such that $M\subset M^{0}$, $\left\vert M\right\vert <\infty $. We will
denote these algebras by $F\left( M\right) $. The morphisms of $\Theta ^{0}$
will be the homomorphisms of these algebras.

By using of the Theorems \ref{freeWithProj} and \ref{revFreeWithProj} $%
\mathcal{F}\left( \Xi ^{\prime }\right) =\mathrm{Ob}\Theta ^{0}$ and $%
\mathcal{F}^{-1}\left( \mathrm{Ob}\Theta ^{0}\right) =\Xi ^{\prime }$, so,
by Corollary \ref{X_Y_equal_invariant} from the Proposition \ref%
{cyclic_invariant} we prove the

\begin{theorem}
\label{withProjAutom}If $\Phi \in \mathrm{Aut}\Xi ^{0}$ then $\mathcal{F}%
\Phi _{\mid \Xi ^{\prime }}\mathcal{F}^{-1}\in \mathrm{Aut}\Theta ^{0}$.
\end{theorem}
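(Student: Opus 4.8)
The plan is to exhibit $\mathcal{F}\Phi_{\mid\Xi'}\mathcal{F}^{-1}$ as a composition of three isomorphisms of categories, each already under control, so that the conclusion follows formally. First I would record that $\mathcal{F}$, being an isomorphism of the ambient categories $\Xi$ and $\Theta$ with inverse $\mathcal{F}^{-1}$ (as established just before this section), restricts to an isomorphism of categories $\mathcal{F}_{\mid\Xi'}:\Xi'\to\Theta^0$. Indeed, Theorems \ref{freeWithProj} and \ref{revFreeWithProj} give the object-level identities $\mathcal{F}(\Xi')=\mathrm{Ob}\Theta^0$ and $\mathcal{F}^{-1}(\mathrm{Ob}\Theta^0)=\Xi'$. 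Since $\Xi'$ and $\Theta^0$ are full subcategories of $\Xi$ and $\Theta$ respectively, and $\mathcal{F}$ is already a bijection on all morphisms, its restriction is automatically a bijection on each hom-set $\mathrm{Hom}(W_1,W_2)\to\mathrm{Hom}(\mathcal{F}W_1,\mathcal{F}W_2)$; hence $\mathcal{F}_{\mid\Xi'}$ is an isomorphism of categories whose inverse is the restriction of $\mathcal{F}^{-1}$ to $\Theta^0$.

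Second I would verify that $\Phi_{\mid\Xi'}$ is an automorphism of the full subcategory $\Xi'$. By Corollary \ref{X_Y_equal_invariant} we have $\Phi(\Xi')=\Xi'$, and applying the same corollary to $\Phi^{-1}\in\mathrm{Aut}\Xi^0$ gives $\Phi^{-1}(\Xi')=\Xi'$. Thus $\Phi$ carries $\mathrm{Ob}\Xi'$ bijectively onto itself; because $\Xi'$ is full in $\Xi^0$ and $\Phi$ is an automorphism of $\Xi^0$ (in particular a bijection on morphisms), its restriction $\Phi_{\mid\Xi'}$ is a well-defined functor $\Xi'\to\Xi'$ that is bijective on objects and on every hom-set, i.e. an automorphism of $\Xi'$.

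Finally I would read off the conclusion. The map in question is the composite
\[
\Theta^0 \xrightarrow{\ \mathcal{F}^{-1}\ } \Xi' \xrightarrow{\ \Phi_{\mid\Xi'}\ } \Xi' \xrightarrow{\ \mathcal{F}\ } \Theta^0 ,
\]
namely the isomorphism $\mathcal{F}^{-1}_{\mid\Theta^0}$, followed by the automorphism $\Phi_{\mid\Xi'}$, followed by the isomorphism $\mathcal{F}_{\mid\Xi'}$. A composition of isomorphisms of categories is again an isomorphism of categories, and here its source and target both equal $\Theta^0$, so $\mathcal{F}\Phi_{\mid\Xi'}\mathcal{F}^{-1}\in\mathrm{Aut}\Theta^0$, as required.

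I expect the only delicate point to be the bookkeeping in the first step: one must check that passing to the full subcategories $\Xi'\subset\Xi$ and $\Theta^0\subset\Theta$ destroys neither fullness nor faithfulness of $\mathcal{F}$. This is harmless precisely because both are \emph{full} subcategories and $\mathcal{F}$ is a genuine isomorphism of the ambient categories, so no morphisms are lost or created under restriction. All the substantive content has already been absorbed into Theorems \ref{freeWithProj} and \ref{revFreeWithProj} (which pin down the object correspondence $\Xi'\leftrightarrow\Theta^0$) and into Corollary \ref{X_Y_equal_invariant} (which guarantees that $\Phi$ preserves $\Xi'$), so the remaining argument is purely formal.
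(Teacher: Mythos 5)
Your proof is correct and takes essentially the same route as the paper: the paper likewise deduces the theorem directly from Theorems \ref{freeWithProj} and \ref{revFreeWithProj} (giving $\mathcal{F}\left( \Xi ^{\prime }\right) =\mathrm{Ob}\Theta ^{0}$ and $\mathcal{F}^{-1}\left( \mathrm{Ob}\Theta ^{0}\right) =\Xi ^{\prime }$) together with Corollary \ref{X_Y_equal_invariant} (giving $\Phi \left( \Xi ^{\prime }\right) =\Xi ^{\prime }$), and then concludes formally. Your extra bookkeeping --- restricting to the full subcategories, applying the corollary to $\Phi ^{-1}$ as well, and composing the three isomorphisms --- merely makes explicit what the paper's one-line argument leaves implicit.
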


\section{Automorphic equivalence in the variety $\Xi $ and in the variety $%
\Theta $.}

\setcounter{equation}{0}

\begin{proposition}
\label{ClCorresp}If $\left( T_{1},T_{2}\right) \subset W=W\left( X,Y\right)
\in \Xi ^{\prime }$, $H=\left( L,V\right) \in \Xi $, $\left(
T_{1},T_{2}\right) $ is an $H$-closed congruence, then $T_{1}\oplus
T_{2}\subset \mathcal{F}\left( W\left( X,Y\right) \right) $ is an $\mathcal{F%
}\left( H\right) $-closed congruence. If $T\subset F\left( M\right) \in 
\mathrm{Ob}\Theta ^{0}$, $N=\left( N,p\right) \in \Theta $, $T$ is an $N$%
-closed congruence, then $\left( T\cap \ker p,T\cap \mathrm{im}p\right)
\subset \mathcal{F}^{-1}\left( F\left( M\right) \right) $ is an $\mathcal{F}%
^{-1}\left( N\right) $-closed congruence. The mappings 
\begin{equation*}
\mathcal{F}_{W,H}:Cl_{H}\left( W\right) \ni \left( T_{1},T_{2}\right)
\rightarrow T_{1}\oplus T_{2}\in Cl_{\mathcal{F}\left( H\right) }\left( 
\mathcal{F}\left( W\right) \right)
\end{equation*}%
and 
\begin{equation*}
\mathcal{F}_{F\left( M\right) ,N}^{-1}:Cl_{N}\left( F\left( M\right) \right)
\ni T\rightarrow \left( T\cap \ker p,T\cap \mathrm{im}p\right) \in Cl_{%
\mathcal{F}^{-1}\left( N\right) }\left( \mathcal{F}^{-1}\left( F\left(
M\right) \right) \right)
\end{equation*}%
are bijections.
\end{proposition}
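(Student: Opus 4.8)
The plan is to transport the closure operator across the category isomorphism $\mathcal{F}$, using the explicit kernel formulas from Proposition \ref{homomorphismsCorresp}. Fix $W=W(X,Y)\in\Xi'$ and write $\mathcal{F}(W)=(F,p)$, which lies in $\mathrm{Ob}\Theta^0$ by Theorem \ref{revFreeWithProj}; here $\ker p=L(X)$, $\mathrm{im}p=A(X)Y$ and $F=\ker p\oplus\mathrm{im}p$ as $k$-spaces. Likewise write $\mathcal{F}(H)=(M,p)$. Since $\mathcal{F}$ is an isomorphism of categories, it restricts to a bijection $\mathrm{Hom}(W,H)\to\mathrm{Hom}(\mathcal{F}(W),\mathcal{F}(H))$ sending $(\varphi,\psi)$ to $\varphi\oplus\psi$, and by Proposition \ref{homomorphismsCorresp} we have $\ker(\varphi\oplus\psi)=\ker\varphi\oplus\ker\psi$. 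This single fact, together with the direct-sum decomposition of $F$, is what I will exploit.

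For the first assertion, let $(T_1,T_2)\subseteq W$ be $H$-closed, so $T_1\subseteq\ker p$, $T_2\subseteq\mathrm{im}p$. Because these sit in complementary summands, $T_1\oplus T_2\subseteq\ker\varphi\oplus\ker\psi$ holds iff $T_1\subseteq\ker\varphi$ and $T_2\subseteq\ker\psi$; hence $(\varphi,\psi)\in(T_1,T_2)_H'$ iff $\varphi\oplus\psi\in(T_1\oplus T_2)_{\mathcal{F}(H)}'$, so the bijection above carries $(T_1,T_2)_H'$ onto $(T_1\oplus T_2)_{\mathcal{F}(H)}'$. I would then compute the closure: intersecting the kernels $\ker(\varphi\oplus\psi)=\ker\varphi\oplus\ker\psi$ over this set and using that an intersection of subspaces respecting $F=\ker p\oplus\mathrm{im}p$ is the direct sum of the componentwise intersections, one gets $(T_1\oplus T_2)_{\mathcal{F}(H)}''=(\bigcap\ker\varphi)\oplus(\bigcap\ker\psi)=T_1\oplus T_2$, the last equality because $(T_1,T_2)$ is $H$-closed. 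Thus $T_1\oplus T_2$ is $\mathcal{F}(H)$-closed, and being closed it is a congruence.

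For the second assertion, let $T\subseteq F(M)$ be an $N$-closed congruence. Here the decisive point, which I expect to be the main obstacle, is that a congruence of an algebra in $\Theta$ is invariant under the unary operation $p$: from $p(T)\subseteq T$ and the fact that each $t\in T$ decomposes as $t=r(t)+p(t)$ with $r(t)\in\ker p\cap T$ and $p(t)\in\mathrm{im}p\cap T$, one obtains $T=(T\cap\ker p)\oplus(T\cap\mathrm{im}p)$. Granting this, the reverse half of Proposition \ref{homomorphismsCorresp}, under which $f$ corresponds to $(rf\kappa,pf\iota)$ with $\ker rf\kappa=\ker f\cap\ker p$ and $\ker pf\iota=\ker f\cap\mathrm{im}p$, shows that $f\in T_N'$ iff $(rf\kappa,pf\iota)\in(T\cap\ker p,T\cap\mathrm{im}p)_{\mathcal{F}^{-1}(N)}'$; the backward implication is exactly where the decomposition $T=(T\cap\ker p)\oplus(T\cap\mathrm{im}p)$ is used. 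Intersecting kernels over this set and pulling the intersection inside the operations $\cdot\cap\ker p$ and $\cdot\cap\mathrm{im}p$ yields $(T\cap\ker p,T\cap\mathrm{im}p)_{\mathcal{F}^{-1}(N)}''=(T_N''\cap\ker p,T_N''\cap\mathrm{im}p)=(T\cap\ker p,T\cap\mathrm{im}p)$, since $T$ is $N$-closed.

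Finally, for bijectivity I would verify that $\mathcal{F}_{W,H}$ and $\mathcal{F}^{-1}_{\mathcal{F}(W),\mathcal{F}(H)}$ are mutually inverse. For $(T_1,T_2)$ with $T_1\subseteq\ker p$, $T_2\subseteq\mathrm{im}p$, complementarity gives $(T_1\oplus T_2)\cap\ker p=T_1$ and $(T_1\oplus T_2)\cap\mathrm{im}p=T_2$, so $\mathcal{F}^{-1}_{\mathcal{F}(W),\mathcal{F}(H)}\mathcal{F}_{W,H}$ is the identity; for a congruence $T$ the decomposition established above gives $(T\cap\ker p)\oplus(T\cap\mathrm{im}p)=T$, so $\mathcal{F}_{W,H}\mathcal{F}^{-1}_{\mathcal{F}(W),\mathcal{F}(H)}$ is the identity as well. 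Hence each map is a two-sided inverse of the other, and both are bijections. The only genuinely structural input is the $p$-invariance decomposition of $\Theta$-congruences; everything else is bookkeeping driven by the kernel correspondence.
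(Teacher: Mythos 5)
Your proof is correct and follows essentially the same route as the paper: both rest on the kernel formulas of Proposition \ref{homomorphismsCorresp}, the complementary-summand bookkeeping in $\mathcal{F}\left( W\right) =\ker p\oplus \mathrm{im}p$, and the $p$-invariance of the congruence $T$ (the paper derives it from $T$ being an intersection of kernels, you derive it from the congruence property directly) to get $T=\left( T\cap \ker p\right) \oplus \left( T\cap \mathrm{im}p\right) $. The only cosmetic difference is that you upgrade the paper's one-sided containment of solution sets to a full bijection before computing closures, whereas the paper combines its containment with the trivial inclusion $\left( T_{1}\oplus T_{2}\right) \subseteq \left( T_{1}\oplus T_{2}\right) _{\mathcal{F}\left( H\right) }^{\prime \prime }$; both yield the same equalities.
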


\begin{proof}
If $\left( \varphi ,\psi \right) \in \left( T_{1},T_{2}\right) _{H}^{\prime
} $, then by Proposition \ref{homomorphismsCorresp} 
\begin{equation*}
\ker \mathcal{F}\left( \varphi ,\psi \right) =\ker \left( \varphi \oplus
\psi \right) =\ker \varphi \oplus \ker \psi \supseteq T_{1}\oplus T_{2},
\end{equation*}%
so 
\begin{equation*}
\mathcal{F}\left( \left( T_{1},T_{2}\right) _{H}^{\prime }\right) =\left\{
f=\varphi \oplus \psi \mid \left( \varphi ,\psi \right) \in \left(
T_{1},T_{2}\right) _{H}^{\prime }\right\} \subseteq \left( T_{1}\oplus
T_{2}\right) _{\mathcal{F}\left( H\right) }^{\prime }.
\end{equation*}%
We will consider $l+v\in \left( T_{1}\oplus T_{2}\right) _{\mathcal{F}\left(
H\right) }^{\prime \prime }=\bigcap\limits_{f\in \left( T_{1}\oplus
T_{2}\right) _{\mathcal{F}\left( H\right) }^{\prime }}\ker f$, where $l\in
\ker p$, $v\in \mathrm{im}p$. $l+v\in \bigcap\limits_{\left( \varphi ,\psi
\right) \in \left( T_{1},T_{2}\right) _{H}^{\prime }}\ker \left( \varphi
\oplus \psi \right) $ holds, so for every $\left( \varphi ,\psi \right) \in
\left( T_{1},T_{2}\right) _{H}^{\prime }$ we have $\left( \varphi \oplus
\psi \right) \left( l+v\right) =\varphi \left( l\right) +\psi \left(
v\right) =0$. $\varphi \left( l\right) \in \ker p$, $\psi \left( v\right)
\in \mathrm{im}p$, so $\varphi \left( l\right) =0$, $\psi \left( v\right) =0$%
. Hence $l\in \bigcap\limits_{\left( \varphi ,\psi \right) \in \left(
T_{1},T_{2}\right) _{H}^{\prime }}\ker \varphi =T_{1}$, $v\in
\bigcap\limits_{\left( \varphi ,\psi \right) \in \left( T_{1},T_{2}\right)
_{H}^{\prime }}\ker \psi =T_{2}$. Therefore $l+v\in T_{1}\oplus T_{2}$ and $%
\left( T_{1}\oplus T_{2}\right) _{\mathcal{F}\left( H\right) }^{\prime
\prime }\subseteq T_{1}\oplus T_{2}$. It means that $T_{1}\oplus T_{2}$ is
an $\mathcal{F}\left( H\right) $-closed congruence.

If $f\in T_{N}^{\prime }$ then $\mathcal{F}^{-1}\left( f\right) =\left(
rf\kappa ,pf\iota \right) $ and by Proposition \ref{homomorphismsCorresp} $%
\ker rf\kappa =\ker f\cap \ker p\supseteq T\cap \ker p$, $\ker pf\iota =\ker
f\cap \mathrm{im}p\supseteq T\cap \mathrm{im}p$ holds. So $\left( rf\kappa
,pf\iota \right) \in \left( T\cap \ker p,T\cap \mathrm{im}p\right) _{%
\mathcal{F}^{-1}\left( N\right) }^{\prime }$ and $\mathcal{F}^{-1}\left(
T_{N}^{\prime }\right) \subseteq \left( T\cap \ker p,T\cap \mathrm{im}%
p\right) _{\mathcal{F}^{-1}\left( N\right) }^{\prime }$. Therefore 
\begin{equation*}
\left( T\cap \ker p,T\cap \mathrm{im}p\right) _{\mathcal{F}^{-1}\left(
N\right) }^{\prime \prime }\subseteq \left( \bigcap\limits_{f\in
T_{N}^{\prime }}\ker rf\kappa ,\bigcap\limits_{f\in T_{N}^{\prime }}\ker
pf\iota \right) =
\end{equation*}%
\begin{equation*}
\left( \bigcap\limits_{f\in T_{N}^{\prime }}\left( \ker f\cap \ker p\right)
,\bigcap\limits_{f\in T_{N}^{\prime }}\left( \ker f\cap \mathrm{im}p\right)
\right) =
\end{equation*}%
\begin{equation*}
\left( \left( \bigcap\limits_{f\in T_{N}^{\prime }}\ker f\right) \cap \ker
p,\left( \bigcap\limits_{f\in T_{N}^{\prime }}\ker f\right) \cap \mathrm{im}%
p\right) =\left( T\cap \ker p,T\cap \mathrm{im}p\right) ,
\end{equation*}%
so $\left( T\cap \ker p,T\cap \mathrm{im}p\right) $ is an $\mathcal{F}%
^{-1}\left( N\right) $-closed congruence.

If $\left( T_{1},T_{2}\right) \in Cl_{H}\left( W\right) $ and $T=T_{1}\oplus
T_{2}\subset \mathcal{F}\left( W\right) $, then $T_{1}=T\cap \ker p$, $%
T_{2}=T\cap \mathrm{im}p$, so $\mathcal{F}_{\mathcal{F}\left( W\right) ,%
\mathcal{F}\left( H\right) }^{-1}\mathcal{F}_{W,H}=id_{Cl_{H}\left( W\right)
}$.

If $N_{1}=\left( N_{1},p_{1}\right) ,N_{2}=\left( N_{2},p_{2}\right) \in
\Theta $ and $f:N_{1}\rightarrow N_{2}$ is a homomorphism then $\ker f$ is $%
p $-invariant. So, $T\in Cl_{N}\left( F\left( M\right) \right) $ is also $p$%
-invariant and $T=\left( T\cap \ker p\right) \oplus \left( T\cap \mathrm{im}%
p\right) $. Therefore $\mathcal{F}_{\mathcal{F}^{-1}\left( F\left( M\right)
\right) ,\mathcal{F}^{-1}\left( N\right) }\mathcal{F}_{F\left( M\right)
,N}^{-1}=id_{Cl_{N}\left( F\left( M\right) \right) }$.
\end{proof}

If $F_{1}=F\left( M_{1}\right) ,F_{2}=F\left( M_{2}\right) \in \mathrm{Ob}%
\Theta ^{0}$ and $T$ is a congruence in $F_{2}$ then $\beta
_{F_{1},F_{2}}\left( T\right) $ will be a relation in $\mathrm{Hom}\left(
F_{1},F_{2}\right) $ which we define as in \cite[Subsection 3.3]{PlotkinSame}%
: $\left( f_{1},f_{2}\right) \in \beta _{F_{1},F_{2}}\left( T\right) $ if
and only if $f_{1}\left( n\right) \equiv f_{2}\left( n\right) \left( \func{%
mod}T\right) $ holds for every $n\in F_{1}$.

\begin{proposition}
\label{betaCorresp}If $W_{1}=W\left( X_{1},Y_{1}\right) ,W_{2}=W\left(
X_{2},Y_{2}\right) \in \Xi ^{\prime }$, $H\in \Xi $, $\left(
T_{1},T_{2}\right) \in Cl_{H}\left( W_{2}\right) $ then $\mathcal{F}\left(
\beta _{W_{1},W_{2}}\left( T_{1},T_{2}\right) \right) =\beta _{\mathcal{F}%
\left( W_{1}\right) ,\mathcal{F}\left( W_{2}\right) }\left( \mathcal{F}%
_{W_{2},H}\left( T_{1},T_{2}\right) \right) $. If $F_{1}=F\left(
M_{1}\right) ,F_{2}=F\left( M_{2}\right) \in \mathrm{Ob}\Theta ^{0}$, $%
N=\left( N,p\right) \in \Theta $, $T\in Cl_{N}\left( F_{2}\right) $ then $%
\mathcal{F}^{-1}\left( \beta _{F_{1},F_{2}}\left( T\right) \right) =\beta _{%
\mathcal{F}^{-1}\left( F_{1}\right) ,\mathcal{F}^{-1}\left( F_{2}\right)
}\left( \mathcal{F}_{F_{2},N}^{-1}\left( T\right) \right) $.
\end{proposition}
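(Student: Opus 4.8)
The plan is to prove each set equality by verifying the corresponding membership equivalence, and then invoking bijectivity on morphisms. Since $\mathcal{F}\mathcal{F}^{-1}=id_{\Theta}$ and $\mathcal{F}^{-1}\mathcal{F}=id_{\Xi}$, the functor $\mathcal{F}$ is a bijection on morphisms, hence it induces a bijection of $\mathrm{Hom}(W_{1},W_{2})\times\mathrm{Hom}(W_{1},W_{2})$ onto $\mathrm{Hom}(\mathcal{F}(W_{1}),\mathcal{F}(W_{2}))\times\mathrm{Hom}(\mathcal{F}(W_{1}),\mathcal{F}(W_{2}))$ sending $((\varphi_{1},\psi_{1}),(\varphi_{2},\psi_{2}))$ to $(\varphi_{1}\oplus\psi_{1},\varphi_{2}\oplus\psi_{2})$, and likewise for $\mathcal{F}^{-1}$. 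So for the first identity it suffices to show that a pair of homomorphisms $W_{1}\to W_{2}$ lies in $\beta_{W_{1},W_{2}}(T_{1},T_{2})$ exactly when its $\mathcal{F}$-image lies in $\beta_{\mathcal{F}(W_{1}),\mathcal{F}(W_{2})}(T_{1}\oplus T_{2})$, recalling that $\mathcal{F}_{W_{2},H}(T_{1},T_{2})=T_{1}\oplus T_{2}$.

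For the first identity I would write a generic element of $\mathcal{F}(W_{1})=L(X_{1})\oplus A(X_{1})Y_{1}$ as $m=l+v$ with $l\in L(X_{1})$, $v\in A(X_{1})Y_{1}$, and compute $(\varphi_{1}\oplus\psi_{1})(m)-(\varphi_{2}\oplus\psi_{2})(m)=(\varphi_{1}(l)-\varphi_{2}(l))+(\psi_{1}(v)-\psi_{2}(v))$. The first summand lies in $L(X_{2})=\ker p$ and the second in $A(X_{2})Y_{2}=\mathrm{im}\,p$, while $T_{1}\subseteq L(X_{2})$ and $T_{2}\subseteq A(X_{2})Y_{2}$ yield $(T_{1}\oplus T_{2})\cap\ker p=T_{1}$ and $(T_{1}\oplus T_{2})\cap\mathrm{im}\,p=T_{2}$. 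Thus the difference lies in $T_{1}\oplus T_{2}$ for every $m$ if and only if $\varphi_{1}(l)-\varphi_{2}(l)\in T_{1}$ for every $l$ and $\psi_{1}(v)-\psi_{2}(v)\in T_{2}$ for every $v$; the forward direction merely adds the two memberships, and the reverse direction reads them off from the uniqueness of the direct-sum decomposition. This is exactly the asserted $\beta$-equivalence.

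For the second identity I would argue in the same spirit, using Proposition \ref{homomorphismsCorresp} to write $\mathcal{F}^{-1}(f)=(r_{2}f\kappa_{1},p_{2}f\iota_{1})=(\varphi,\psi)$ and exploiting the intertwining $fp_{1}=p_{2}f$. For $l\in\ker p_{1}$ one has $p_{2}f(l)=fp_{1}(l)=0$, so $f(l)\in\ker p_{2}$ and $\varphi(l)=f(l)$; dually, for $w\in\mathrm{im}\,p_{1}$ one has $w=p_{1}(w)$, so $f(w)=fp_{1}(w)=p_{2}f(w)\in\mathrm{im}\,p_{2}$ and $\psi(w)=f(w)$. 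Hence on the two summands of $F_{1}=\ker p_{1}\oplus\mathrm{im}\,p_{1}$ the pair $(\varphi,\psi)$ reproduces $f$ with no cross terms. Applying this to $f_{1},f_{2}$ and writing $n=l+w$, the condition $f_{1}(n)-f_{2}(n)\in T$ for all $n$ becomes equivalent to $\varphi_{1}(l)-\varphi_{2}(l)\in T\cap\ker p_{2}$ for all $l$ together with $\psi_{1}(w)-\psi_{2}(w)\in T\cap\mathrm{im}\,p_{2}$ for all $w$, the forward direction restricting to each summand and the reverse direction recombining by additivity of the congruence $T$. Equivalently, once the first identity is in hand, the second follows formally by substituting $W_{i}=\mathcal{F}^{-1}(F_{i})\in\Xi^{\prime}$, $H=\mathcal{F}^{-1}(N)$, $(T_{1},T_{2})=(T\cap\ker p_{2},T\cap\mathrm{im}\,p_{2})$ and applying $\mathcal{F}^{-1}$, since the $p$-invariance of the closed congruence $T$ from Proposition \ref{ClCorresp} gives $(T\cap\ker p_{2})\oplus(T\cap\mathrm{im}\,p_{2})=T$.

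The computations are short; the step to be careful about is the reverse implication of the second identity, where the full difference $f_{1}(n)-f_{2}(n)$ must be recovered inside $T$ from control of only its two projected components. What makes this work is precisely the vanishing of cross terms forced by $fp_{1}=p_{2}f$, so that $f$ is determined on each summand by one of $\varphi,\psi$ alone, together with the fact that $T$, being a congruence, is closed under addition. In the alternative derivation of the second identity from the first, this same structural content reappears as the $p$-invariance of $T$ supplied by Proposition \ref{ClCorresp}.
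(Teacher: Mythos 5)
Your proposal is correct and follows essentially the same route as the paper: the first identity is verified by the same componentwise computation on $n=l+v$ using the decomposition $\ker p\oplus \mathrm{im}\,p$ and the bijectivity of $\mathcal{F}$ on morphisms, and the second identity is obtained, exactly as in the paper's one-line conclusion, from the first identity together with the $p$-invariance $T=(T\cap\ker p)\oplus(T\cap\mathrm{im}\,p)$ established in the proof of Proposition \ref{ClCorresp}. Your additional direct verification of the second identity is a harmless elaboration of the same argument, not a different method.
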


\begin{proof}
$\mathcal{F}\left( W_{i}\right) =\left( L\left( X_{i}\right) \oplus A\left(
X_{i}\right) Y_{i},p_{\mid A\left( X_{i}\right) Y_{i}}\right) $ where $i=1,2$%
. $\mathcal{F}_{W_{2},H}\left( T_{1},T_{2}\right) =T_{1}\oplus
T_{2}\subseteq L\left( X_{2}\right) \oplus A\left( X_{2}\right) Y_{2}=%
\mathcal{F}\left( W_{2}\right) $. If $\left( \left( \varphi _{1},\psi
_{1}\right) ,\left( \varphi _{2},\psi _{2}\right) \right) \in \beta
_{W_{1},W_{2}}\left( T_{1},T_{2}\right) $ then $\varphi _{1}\left( l\right)
\equiv \varphi _{2}\left( l\right) \left( \func{mod}T_{1}\right) $ holds for
every $l\in L\left( X_{1}\right) $ and $\psi _{1}\left( v\right) \equiv \psi
_{2}\left( v\right) \left( \func{mod}T_{2}\right) $ holds for every $v\in
A\left( X_{1}\right) Y_{1}$. $\mathcal{F}\left( \varphi _{i},\psi
_{i}\right) =\varphi _{i}\oplus \psi _{i}\in \mathrm{Hom}\left( \mathcal{F}%
\left( W_{1}\right) ,\mathcal{F}\left( W_{2}\right) \right) $ where $i=1,2$.
For every $n\in \mathcal{F}\left( W_{1}\right) $ we have $n=l+v$, where $%
l\in L\left( X_{1}\right) $, $v\in A\left( X_{1}\right) Y_{1}$. So $\left(
\varphi _{1}\oplus \psi _{1}\right) \left( n\right) =\varphi _{1}\left(
l\right) +\psi _{1}\left( v\right) \equiv \varphi _{2}\left( l\right) +\psi
_{2}\left( v\right) \left( \func{mod}T_{1}\oplus T_{2}\right) $, $\varphi
_{2}\left( l\right) +\psi _{2}\left( v\right) =\left( \varphi _{2}\oplus
\psi _{2}\right) \left( n\right) $ and 
\begin{equation*}
\left( \mathcal{F}\left( \varphi _{1},\psi _{1}\right) ,\mathcal{F}\left(
\varphi _{2},\psi _{2}\right) \right) \in \beta _{\mathcal{F}\left(
W_{1}\right) ,\mathcal{F}\left( W_{2}\right) }\left( T_{1}\oplus
T_{2}\right) .
\end{equation*}

We assume that $\left( f_{1},f_{2}\right) \in \beta _{\mathcal{F}\left(
W_{1}\right) ,\mathcal{F}\left( W_{2}\right) }\left( T_{1}\oplus
T_{2}\right) $. $\mathcal{F}^{-1}\left( f_{i}\right) =\left( rf_{i}\kappa
,pf_{i}\iota \right) \in \mathrm{Hom}\left( W_{1},W_{2}\right) $ where $%
i=1,2 $. If $l\in L\left( X_{1}\right) $ then $f_{1}\left( l\right)
-f_{2}\left( l\right) \in T_{1}\oplus T_{2}$ and $rf_{1}\kappa \left(
l\right) -rf_{2}\kappa \left( l\right) \in T_{1}$. Analogously we have $%
pf_{1}\iota \left( v\right) -pf_{2}\iota \left( v\right) \in T_{2}$ for
every $v\in A\left( X_{2}\right) Y_{2}$, so%
\begin{equation*}
\left( \mathcal{F}^{-1}\left( f_{1}\right) ,\mathcal{F}^{-1}\left(
f_{2}\right) \right) =\left( \left( rf_{1}\kappa ,pf_{1}\iota \right)
,\left( rf_{2}\kappa ,pf_{2}\iota \right) \right) \in \beta
_{W_{1},W_{2}}\left( T_{1},T_{2}\right) .
\end{equation*}%
Therefore 
\begin{equation*}
\mathcal{F}\left( \beta _{W_{1},W_{2}}\left( T_{1},T_{2}\right) \right)
=\beta _{\mathcal{F}\left( W_{1}\right) ,\mathcal{F}\left( W_{2}\right)
}\left( \mathcal{F}_{W_{2},H}\left( T_{1},T_{2}\right) \right) .
\end{equation*}

From this fact and from proving of Proposition \ref{ClCorresp} we can
conclude that 
\begin{equation*}
\mathcal{F}^{-1}\left( \beta _{F_{1},F_{2}}\left( T\right) \right) =\beta _{%
\mathcal{F}^{-1}\left( F_{1}\right) ,\mathcal{F}^{-1}\left( F_{2}\right)
}\left( \mathcal{F}_{F_{2},N}^{-1}\left( T\right) \right) .
\end{equation*}
\end{proof}

\begin{theorem}
\label{automorphEquiv}If $H_{1}=\left( L_{1},V_{1}\right) ,H_{2}=\left(
L_{2},V_{2}\right) \in \Xi $ are automorphically equivalent then $N_{1}=%
\mathcal{F}\left( H_{1}\right) ,N_{2}=\mathcal{F}\left( H_{2}\right) $ are
automorphically equivalent.
\end{theorem}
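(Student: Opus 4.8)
The plan is to transport the witnesses of automorphic equivalence from $\Xi$ to $\Theta$ along the category isomorphisms $\mathcal{F},\mathcal{F}^{-1}$. To say that $N_1,N_2$ are automorphically equivalent means producing an automorphism $\Psi\in\mathrm{Aut}\Theta^0$, a family $\gamma=\gamma\left(\Psi\right)$ of bijections $Cl_{N_1}\left(F\right)\rightarrow Cl_{N_2}\left(\Psi\left(F\right)\right)$, and the compatibility of $\Psi$ with the relations $\beta_{F_1,F_2}$ --- the evident analogues in $\Theta^0$ of the three conditions defining automorphic equivalence in $\Xi^0$. Let $\Phi\in\mathrm{Aut}\Xi^0$ and $\alpha=\alpha\left(\Phi\right)$ witness the automorphic equivalence of $H_1,H_2$. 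For condition 1 I set $\Psi=\mathcal{F}\Phi_{\mid\Xi'}\mathcal{F}^{-1}$, which is a well-defined automorphism of $\Theta^0$ by Theorem \ref{withProjAutom} (this uses Corollary \ref{X_Y_equal_invariant}, that $\Phi\left(\Xi'\right)=\Xi'$, so $\Phi$ restricts to $\Xi'$). Note $\mathcal{F}^{-1}\left(N_i\right)=H_i$ since $N_i=\mathcal{F}\left(H_i\right)$.

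For condition 2, for each $F\in\mathrm{Ob}\Theta^0$ write $W=\mathcal{F}^{-1}\left(F\right)\in\Xi'$ and define
\begin{equation*}
\gamma\left(\Psi\right)_F=\mathcal{F}_{\Phi\left(W\right),H_2}\,\alpha\left(\Phi\right)_W\,\mathcal{F}_{F,N_1}^{-1}.
\end{equation*}
Here $\mathcal{F}_{F,N_1}^{-1}:Cl_{N_1}\left(F\right)\rightarrow Cl_{H_1}\left(W\right)$ and $\mathcal{F}_{\Phi\left(W\right),H_2}:Cl_{H_2}\left(\Phi\left(W\right)\right)\rightarrow Cl_{N_2}\left(\mathcal{F}\Phi\left(W\right)\right)$ are bijections by Proposition \ref{ClCorresp}, while $\alpha\left(\Phi\right)_W:Cl_{H_1}\left(W\right)\rightarrow Cl_{H_2}\left(\Phi\left(W\right)\right)$ is a bijection by condition 2 of the automorphic equivalence of $H_1,H_2$. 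Since $\mathcal{F}\Phi\left(W\right)=\mathcal{F}\Phi_{\mid\Xi'}\mathcal{F}^{-1}\left(F\right)=\Psi\left(F\right)$, the composite $\gamma\left(\Psi\right)_F$ is a bijection $Cl_{N_1}\left(F\right)\rightarrow Cl_{N_2}\left(\Psi\left(F\right)\right)$, as required.

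The main work is condition 3. Fix $F_1,F_2\in\mathrm{Ob}\Theta^0$, put $W_i=\mathcal{F}^{-1}\left(F_i\right)\in\Xi'$, and take $T\in Cl_{N_1}\left(F_2\right)$. Because on morphisms $\Psi=\mathcal{F}\Phi\mathcal{F}^{-1}$, applying $\Psi$ to the relation $\beta_{F_1,F_2}\left(T\right)$ is the same as applying $\mathcal{F}^{-1}$, then $\Phi$, then $\mathcal{F}$ to each pair, and I would carry this out in three steps. First, Proposition \ref{betaCorresp} (the $\mathcal{F}^{-1}$ direction, with $N=N_1$) gives $\mathcal{F}^{-1}\left(\beta_{F_1,F_2}\left(T\right)\right)=\beta_{W_1,W_2}\left(T_1,T_2\right)$, where $\left(T_1,T_2\right)=\mathcal{F}_{F_2,N_1}^{-1}\left(T\right)\in Cl_{H_1}\left(W_2\right)$. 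Second, condition 3 of the automorphic equivalence of $H_1,H_2$ gives $\Phi\left(\beta_{W_1,W_2}\left(T_1,T_2\right)\right)=\beta_{\Phi\left(W_1\right),\Phi\left(W_2\right)}\left(\alpha\left(\Phi\right)_{W_2}\left(T_1,T_2\right)\right)$. Third, Proposition \ref{betaCorresp} (the $\mathcal{F}$ direction, with $H=H_2$) applied to the closed congruence $\alpha\left(\Phi\right)_{W_2}\left(T_1,T_2\right)\in Cl_{H_2}\left(\Phi\left(W_2\right)\right)$ gives
\begin{equation*}
\mathcal{F}\left(\beta_{\Phi\left(W_1\right),\Phi\left(W_2\right)}\left(\alpha\left(\Phi\right)_{W_2}\left(T_1,T_2\right)\right)\right)=\beta_{\Psi\left(F_1\right),\Psi\left(F_2\right)}\left(\mathcal{F}_{\Phi\left(W_2\right),H_2}\,\alpha\left(\Phi\right)_{W_2}\left(T_1,T_2\right)\right),
\end{equation*}
using $\mathcal{F}\Phi\left(W_i\right)=\Psi\left(F_i\right)$. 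Composing the three steps and recognizing the inner argument as $\mathcal{F}_{\Phi\left(W_2\right),H_2}\,\alpha\left(\Phi\right)_{W_2}\,\mathcal{F}_{F_2,N_1}^{-1}\left(T\right)=\gamma\left(\Psi\right)_{F_2}\left(T\right)$ yields $\Psi\left(\beta_{F_1,F_2}\left(T\right)\right)=\beta_{\Psi\left(F_1\right),\Psi\left(F_2\right)}\left(\gamma\left(\Psi\right)_{F_2}\left(T\right)\right)$.

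The diagram chase itself is routine; the part demanding care --- and the likeliest source of error --- is bookkeeping which base object $H_1$ or $H_2$ each invocation of Propositions \ref{ClCorresp} and \ref{betaCorresp} uses, and checking at each stage that the intermediate congruence genuinely lies in the correct closed family $Cl_{H_i}\left(\cdot\right)$ so that the cited results apply (e.g. $\left(T_1,T_2\right)\in Cl_{H_1}\left(W_2\right)$ for step two, and $\alpha\left(\Phi\right)_{W_2}\left(T_1,T_2\right)\in Cl_{H_2}\left(\Phi\left(W_2\right)\right)$ for step three). No genuinely new construction beyond $\Psi$ and $\gamma$ is needed.
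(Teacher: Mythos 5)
Your proposal is correct and coincides essentially with the paper's own proof: the same $\Psi=\mathcal{F}\Phi_{\mid\Xi'}\mathcal{F}^{-1}$ from Theorem \ref{withProjAutom}, the same composite bijections $\mathcal{F}_{\Phi(W),H_2}\,\alpha(\Phi)_W\,\mathcal{F}_{F,N_1}^{-1}$ via Proposition \ref{ClCorresp}, and the same three-step chain (Proposition \ref{betaCorresp} in the $\mathcal{F}^{-1}$ direction, condition 3 for $H_1,H_2$, then Proposition \ref{betaCorresp} in the $\mathcal{F}$ direction) for the $\beta$-compatibility. Your extra bookkeeping about which family $Cl_{H_i}(\cdot)$ each intermediate congruence lies in is exactly what the paper's displayed computation implicitly relies on.
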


\begin{proof}
We have an automorphism $\Phi \in \mathrm{Aut}\Xi ^{0}$ and the system of
bijections $\alpha \left( \Phi \right) _{W}:Cl_{H_{1}}\left( W\right)
\rightarrow Cl_{H_{2}}\left( \Phi \left( W\right) \right) $\ for every $W\in 
\mathrm{Ob}\Xi ^{0}$. Also the equation 
\begin{equation*}
\Phi \left( \beta _{W_{1},W_{2}}\left( T_{1},T_{2}\right) \right) =\beta
_{\Phi \left( W_{1}\right) ,\Phi \left( W_{2}\right) }\left( \alpha \left(
\Phi \right) _{W_{2}}\left( T_{1},T_{2}\right) \right)
\end{equation*}%
\ holds for every $W_{1},W_{2}\in \mathrm{Ob}\Xi ^{0}$, and every $\left(
T_{1},T_{2}\right) \in $ $Cl_{H_{1}}\left( W_{2}\right) $.

By Proposition \ref{withProjAutom} there is an automorphism $\Psi =\mathcal{F%
}\Phi _{\mid \Xi ^{\prime }}\mathcal{F}^{-1}\in \mathrm{Aut}\Theta ^{0}$. By
Proposition \ref{ClCorresp} the mapping: 
\begin{equation*}
\alpha \left( \Psi \right) _{F}=\mathcal{F}_{\Phi \mathcal{F}^{-1}\left(
F\right) ,H_{2}}\alpha \left( \Phi \right) _{\mathcal{F}^{-1}\left( F\right)
}\mathcal{F}_{F,N_{1}}^{-1}:Cl_{N_{1}}\left( F\right) \rightarrow
Cl_{N_{2}}\left( \Psi \left( F\right) \right)
\end{equation*}%
is a bijection for every $F\in \mathrm{Ob}\Theta ^{0}$. By Proposition \ref%
{betaCorresp} we have for every $F_{1},F_{2}\in \mathrm{Ob}\Theta ^{0}$, and
every $T\in $ $Cl_{N_{1}}\left( F_{2}\right) $ that 
\begin{equation*}
\Psi \left( \beta _{F_{1},F_{2}}\left( T\right) \right) =\mathcal{F}\Phi 
\mathcal{F}^{-1}\left( \beta _{F_{1},F_{2}}\left( T\right) \right) =\mathcal{%
F}\Phi \left( \beta _{\mathcal{F}^{-1}\left( F_{1}\right) ,\mathcal{F}%
^{-1}\left( F_{2}\right) }\left( \mathcal{F}_{F_{2},N_{1}}^{-1}\left(
T\right) \right) \right) =
\end{equation*}%
\begin{equation*}
\mathcal{F}\left( \beta _{\Phi \mathcal{F}^{-1}\left( F_{1}\right) ,\Phi 
\mathcal{F}^{-1}\left( F_{2}\right) }\left( \alpha \left( \Phi \right) _{%
\mathcal{F}^{-1}\left( F_{2}\right) }\left( \mathcal{F}_{F_{2},N_{1}}^{-1}%
\left( T\right) \right) \right) \right) =
\end{equation*}%
\begin{equation*}
\beta _{\mathcal{F}\Phi \mathcal{F}^{-1}\left( F_{1}\right) ,\mathcal{F}\Phi 
\mathcal{F}^{-1}\left( F_{2}\right) }\left( \mathcal{F}_{\Phi \mathcal{F}%
^{-1}\left( F_{2}\right) ,H_{2}}\alpha \left( \Phi \right) _{\mathcal{F}%
^{-1}\left( F_{2}\right) }\left( \mathcal{F}_{F_{2},N_{1}}^{-1}\left(
T\right) \right) \right) =
\end{equation*}%
\begin{equation*}
\beta _{\Psi \left( F_{1}\right) ,\Psi \left( F_{2}\right) }\left( \alpha
\left( \Psi \right) _{F_{2}}\left( T\right) \right) .
\end{equation*}
\end{proof}

\section{Automorphisms of the category of the finitely generated free
algebras of the some variety of $1$-sorted algebras.\label{1-sorted}}

\setcounter{equation}{0}

In this Section we explain the method of verbal operations which we will use
for the studying of the relation between the automorphic equivalence and
geometric equivalence in the our variety $\Theta $. We use results of the 
\cite{PlotkinZhitAutCat} and \cite{TsurkovAutomEquiv}.

In this Section the word "algebra" means "universal algebra". Also so on in
this Section $\Theta $ will be an arbitrary variety of $1$-sorted algebras.
As in the Section \ref{From2Autom_to_1Autom} we define the category $\Theta
^{0}$ of the finitely generated free algebras of our variety $\Theta $. The
infinite countable sets of symbols which will be the generators of our free
algebras we will denote in this Section by $X^{0}$.

\begin{definition}
An automorphism $\Upsilon $ of a category $\mathfrak{K}$ is \textbf{inner},
if it is isomorphic as a functor to the identity automorphism of the
category $\mathfrak{K}$.
\end{definition}

It means that for every $A\in \mathrm{Ob}\mathfrak{K}$ there exists an
isomorphism $s_{A}^{\Upsilon }:A\rightarrow \Upsilon \left( A\right) $ such
that for every $\alpha \in \mathrm{Mor}_{\mathfrak{K}}\left( A,B\right) $
the diagram%
\begin{equation*}
\begin{array}{ccc}
A & \overrightarrow{s_{A}^{\Upsilon }} & \Upsilon \left( A\right) \\ 
\downarrow \alpha &  & \Upsilon \left( \alpha \right) \downarrow \\ 
B & \underrightarrow{s_{B}^{\Upsilon }} & \Upsilon \left( B\right)%
\end{array}%
\end{equation*}%
\noindent commutes. The group of the all automorphisms of the category $%
\Theta ^{0}$ we denote by $\mathfrak{A}$. The subgroup of the all inner
automorphisms of $\Theta ^{0}$ we denote by $\mathfrak{Y}$. This is a normal
subgroup of $\mathfrak{A}$: $\mathfrak{Y}\vartriangleleft \mathfrak{A}$.

We know from \cite{PlotkinSame} that if automorphic equivalence of algebras $%
H_{1},H_{2}\in \Theta $ provided by inner automorphism then $H_{1}$ and $%
H_{2}$ are geometrically equivalent. Hear variety $\Theta $ can by even a
variety of many-sorted algebras. So for studying of the difference between
the automorphic equivalence and geometric equivalence of the algebras from $%
\Theta $, we must calculate the quotient group $\mathfrak{A/Y}$.

In the $1$-sorted case there is a reason to define

\begin{definition}
\label{str_stab_aut}\textbf{\hspace{-0.08in} }\textit{An automorphism $\Phi $
of the category }$\Theta ^{0}$\textit{\ is called \textbf{strongly stable}
if it satisfies the conditions:}

\begin{enumerate}
\item[A1)] $\Phi $\textit{\ preserves all objects of }$\Theta ^{0}$\textit{,}

\item[A2)] \textit{there exists a system of bijections }$\left\{ s_{F}^{\Phi
}:F\rightarrow F\mid F\in \mathrm{Ob}\Theta ^{0}\right\} $\textit{\ such
that }$\Phi $\textit{\ acts on the morphisms $\alpha :D\rightarrow F$ of }$%
\Theta ^{0}$\textit{\ by this way: }%
\begin{equation}
\Phi \left( \alpha \right) =s_{F}^{\Phi }\alpha \left( s_{D}^{\Phi }\right)
^{-1},  \label{action_by_bijections}
\end{equation}

\item[A3)] $s_{F}^{\Phi }\mid _{X}=id_{X},$ \textit{\ for every free algebra}
$F=F\left( X\right) \in \mathrm{Ob}\Theta ^{0}$.
\end{enumerate}
\end{definition}

The subgroup of the all strongly stable automorphisms of $\Theta ^{0}$ we
denote by $\mathfrak{S}$.

We say that the variety $\Theta $ has IBN propriety if for every $F\left(
X\right) ,F\left( Y\right) \in \mathrm{Ob}\Theta ^{0}$ we have $F\left(
X\right) \cong F\left( Y\right) $ only if $\left\vert X\right\vert
=\left\vert Y\right\vert $. In this case we have the decomposition%
\begin{equation}
\mathfrak{A=YS}  \label{decomp}
\end{equation}%
so $\mathfrak{A/Y=S/S\cap Y}$. The system of bijections $\left\{ s_{F}^{\Phi
}=s_{F}:F\rightarrow F\mid F\in \mathrm{Ob}\Theta ^{0}\right\} $ mentioned
in definition of the strongly stable automorphism fulfills these two
conditions:

\begin{enumerate}
\item[B1)] for every homomorphism $\alpha :A\rightarrow B\in \mathrm{Mor}%
\Theta ^{0}$ the mappings $s_{B}\alpha s_{A}^{-1}$ and $s_{B}^{-1}\alpha
s_{A}$ are homomorphisms;

\item[B2)] $s_{F}\mid _{X}=id_{X}$ for every free algebra $F\in \mathrm{Ob}%
\Theta ^{0}$.
\end{enumerate}

These bijections uniquely defined by the strongly stable automorphism $\Phi $%
, because for every $F\in \mathrm{Ob}\Theta ^{0}$ and every $f\in F$ we have%
\begin{equation}
s_{F}^{\Phi }\left( f\right) =s_{F}^{\Phi }\alpha \left( x\right) =\left(
s_{F}^{\Phi }\alpha \left( s_{D}^{\Phi }\right) ^{-1}\right) \left( x\right)
=\left( \Phi \left( \alpha \right) \right) \left( x\right) ,
\label{autom_bijections}
\end{equation}%
where $D=D\left( x\right) \in \mathrm{Ob}\Theta ^{0}$ is a $1$-generated
free algebra and $\alpha :D\rightarrow F$ homomorphism such that $\alpha
\left( x\right) =f$.

On the other side by system of bijections $\left\{ s_{F}:F\rightarrow F\mid
F\in \mathrm{Ob}\Theta ^{0}\right\} $ which fulfills conditions B1) and B2)
we can define the strongly stable automorphism $\Phi $, which preserves all
objects of $\Theta ^{0}$ and acts on the morphisms $\alpha :D\rightarrow F$
of $\Theta ^{0}$\ by formula (\ref{action_by_bijections}) with $s_{F}^{\Phi
}=s_{F}$. By this way we construct an one-to-one and onto correspondence
between the set of the all strongly stable automorphisms of the category $%
\Theta ^{0}$ and the set of the all systems of bijections which fulfill
conditions B1) and B2).

We denote the signature of the algebras from the variety $\Theta $ by $%
\Omega $. The arity of the operation $\omega \in \Omega $ we denote by $%
n_{\omega }$ and by $F_{\omega }$ we denote $F\left( x_{1},\ldots
,x_{n_{\omega }}\right) \in \mathrm{Ob}\Theta ^{0}$. $\omega \left(
x_{1},\ldots ,x_{n_{\omega }}\right) \in F_{\omega }$. If we have system of
bijections $\left\{ s_{F}:F\rightarrow F\mid F\in \mathrm{Ob}\Theta
^{0}\right\} $ which fulfills conditions B1) and B2) then 
\begin{equation}
w_{\omega }\left( x_{1},\ldots ,x_{n_{\omega }}\right) =s_{F_{\omega
}}\left( \omega \left( x_{1},\ldots ,x_{n_{\omega }}\right) \right) \in
F_{\omega }.  \label{word_definition}
\end{equation}%
We will consider the system of words $W=\left\{ w_{\omega }\mid \omega \in
\Omega \right\} $. In every $H\in \Theta $ we can define new operations $%
\left\{ \omega ^{\ast }\mid \omega \in \Omega \right\} $ by using of the
system of words $W$: 
\begin{equation}
\omega ^{\ast }\left( h_{1},\ldots ,h_{n_{\omega }}\right) =w_{\omega
}\left( h_{1},\ldots ,h_{n_{\omega }}\right)  \label{def_operation}
\end{equation}%
for every $h_{1},\ldots ,h_{n_{\omega }}\in H$. We denote by $H_{W}^{\ast }$
the new algebra which coincide as set with $H$ but has other operations: $%
\left\{ \omega ^{\ast }\mid \omega \in \Omega \right\} $ instead $\left\{
\omega \mid \omega \in \Omega \right\} $. The system of words $W=\left\{
w_{\omega }\mid \omega \in \Omega \right\} $ fulfills these two conditions:

\begin{enumerate}
\item[Op1)] $w_{\omega }\left( x_{1},\ldots ,x_{n_{\omega }}\right) \in
F_{\omega }$ for every $\omega \in \Omega $,

\item[Op2)] for every $F=F\left( X\right) \in \mathrm{Ob}\Theta ^{0}$ there
exists an isomorphism $\sigma _{F}:F\rightarrow F_{W}^{\ast }$ such that $%
\sigma _{F}\mid _{X}=id_{X}$
\end{enumerate}

because the bijections $\left\{ s_{F}\mid F\in \mathrm{Ob}\Theta
^{0}\right\} $ will be isomorphisms $\sigma _{F}:F\rightarrow F_{W}^{\ast }$.

On the other side if we have a system of words $W=\left\{ w_{\omega }\mid
\omega \in \Omega \right\} $ which fulfills conditions Op1) and Op2), then
we have that $F_{W}^{\ast }\in \Theta $, so the isomorphisms $\sigma
_{F}:F\rightarrow F_{W}^{\ast }$ are uniquely determined by the system of
words $W$. This system of isomorphisms $\left\{ \sigma _{F}:F\rightarrow
F_{W}^{\ast }\mid F\in \mathrm{Ob}\Theta ^{0}\right\} $ is a system of
bijections which fulfills conditions B1) and B2) with $s_{F}=\sigma _{F}$.
By this way we construct an one-to-one and onto correspondence between the
set of the all system of bijections which fulfills conditions B1) and B2)
and the set of the all system of words which fulfills conditions Op1) and
Op2).

Therefore we can calculate the group $\mathfrak{S}$ if we can find the all
system of words which fulfill conditions Op1) and Op2). For calculation of
the group $\mathfrak{S\cap Y}$ we also have a

\begin{criterion}
\label{inner_stable}The strongly stable automorphism $\Phi $ of the category 
$\Theta ^{0}$ which corresponds to the system of words $W$ is inner if and
only if for every $F\in \mathrm{Ob}\Theta ^{0}$ there exists an isomorphism $%
c_{F}:F\rightarrow F_{W}^{\ast }$ such that $c_{F}\alpha =\alpha c_{D}$
fulfills for every $\left( \alpha :D\rightarrow F\right) \in \mathrm{Mor}%
\Theta ^{0}$.
\end{criterion}

\section{Strongly stable automorphisms of the category $\Theta ^{0}$.}

\setcounter{equation}{0}

The variety $\Theta $ is a variety of $1$-sorted universal algebras. If $%
F\left( M\right) \in \mathrm{Ob}\Theta ^{0}$ then by Theorem \ref%
{freeWithProj} $\left\vert M\right\vert =\dim \left( \ker p/\left( \ker
p\right) ^{2}\right) $, so variety $\Theta $ possesses the IBN property: for
free algebras $F\left( M_{1}\right) ,F\left( M_{2}\right) \in \Theta $ we
have $F\left( M_{1}\right) \cong F\left( M_{2}\right) $ if and only if $%
\left\vert M_{1}\right\vert =\left\vert M_{2}\right\vert $. So we have for
our variety $\Theta ^{0}$ the decomposition (\ref{decomp}) and for
calculation of the group $\mathfrak{A/Y=S/S\cap Y}$ we can use the method
described in the Section \ref{1-sorted}.

The signature of our variety $\Theta $ is $\Omega =\left\{ 0,\lambda \left(
\lambda \in k\right) ,+,\left[ ,\right] ,p\right\} $, where $0$ is $0$-ary
operation of the taking $0$, $\lambda $ for every $\lambda \in k$ is the $1$%
-nary operation of the multiplication by this scalar, $p$ is the $1$-nary
operation of projection, $+$ is the addition and $\left[ ,\right] $ are the
Lie brackets. We must find for the calculation of the group $\mathfrak{S}$
all the system of words 
\begin{equation}
W=\left\{ w_{0},w_{\lambda }\left( \lambda \in k\right) ,w_{+},w_{\left[ ,%
\right] },w_{p}\right\}  \label{words_list}
\end{equation}
which fulfill conditions Op1) and Op2) and after use the Criterion \ref%
{inner_stable} for the calculation of the group $\mathfrak{S\cap Y}$. By
this way we will prove the

\begin{theorem}
\label{group}If $\mathrm{Aut}k=\left\{ id_{k}\right\} $ then the group $%
\mathfrak{A/Y}$ is a trivial.
\end{theorem}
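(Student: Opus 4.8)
The plan is to exploit the decomposition $\mathfrak{A}=\mathfrak{YS}$ from (\ref{decomp}), which gives $\mathfrak{A}/\mathfrak{Y}=\mathfrak{S}/(\mathfrak{S}\cap\mathfrak{Y})$. Thus it suffices to show that \emph{every} strongly stable automorphism of $\Theta^{0}$ is inner, i.e. $\mathfrak{S}=\mathfrak{S}\cap\mathfrak{Y}$. By the correspondence set up in Section \ref{1-sorted}, a strongly stable automorphism is the same datum as a system of words $W=\{w_{0},w_{\lambda}\,(\lambda\in k),w_{+},w_{[,]},w_{p}\}$ satisfying Op1) and Op2), and by Criterion \ref{inner_stable} such an automorphism is inner exactly when the algebras $F_{W}^{\ast}$ admit isomorphisms $c_{F}:F\rightarrow F_{W}^{\ast}$ that are natural in $F$. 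So the proof splits into two tasks: first determine all admissible systems $W$, then produce natural isomorphisms $c_{F}$ for each surviving system.

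First I would pin down the words on the linear part of the signature. Since $F(\varnothing)=\{0\}$ (by Theorem \ref{freeWithProj}, as $\ker p=L(\varnothing)=\{0\}$ and $\mathrm{im}\,p=\{0\}$), the $0$-ary word is forced: $w_{0}=0$. The operations $\{+,\lambda\,(\lambda\in k)\}$ equip every algebra of $\Theta$ with a $k$-vector space structure, and Op2) requires $(F,w_{+},\{w_{\lambda}\})$ to be a $k$-vector space isomorphic, by a map fixing the free generators, to $(F,+,\{\lambda\})$. Running the standard verbal-operation analysis of \cite{PlotkinZhitAutCat} and \cite{TsurkovAutomEquiv} on this abelian-group-with-operators reduct, I would conclude that $w_{+}(x_{1},x_{2})=x_{1}+x_{2}$ and that $w_{\lambda}(x_{1})=\tau(\lambda)x_{1}$ for some field automorphism $\tau\in\mathrm{Aut}\,k$. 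This is precisely the step where the hypothesis enters: since $\mathrm{Aut}\,k=\{id_{k}\}$ we get $\tau=id_{k}$, so the addition and all scalar multiplications of $F_{W}^{\ast}$ coincide with the original ones.

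With the $k$-linear structure fixed, $w_{p}$ must be a $k$-linear unary word, hence of the form $ax_{1}+b\,p(x_{1})$, while $w_{[,]}$ must be a $k$-bilinear antisymmetric word of multidegree $(1,1)$, hence a combination of $[x_{1},x_{2}]$, $[p(x_{1}),x_{2}]+[x_{1},p(x_{2})]$ and $[p(x_{1}),p(x_{2})]$. Imposing the axioms of $\Theta$ on $F_{W}^{\ast}$ — antisymmetry and Jacobi for $[,]^{\ast}$, idempotency of $p^{\ast}$ and the requirement that $p^{\ast}$ be a derivation of $[,]^{\ast}$ — together with Op2), I would solve the resulting constraints. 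Idempotency and the derivation axiom already force $w_{p}=p(x_{1})$ (the alternatives $id$, $id-p$, $0$ are excluded, the first two not being derivations and the last not giving an isomorphic algebra), and the derivation axiom applied to $w_{[,]}$ eliminates the $p$-terms, leaving $w_{[,]}(x_{1},x_{2})=\gamma[x_{1},x_{2}]$ for some $\gamma\in k\setminus\{0\}$. I expect this determination of $w_{[,]}$ to be the main obstacle: it is the one genuinely computational point, requiring careful use of the derivation identity on the generators of $F(m_{1},m_{2})$ (whose $p$-components are nonzero) and a test of Op2) on $F(m_{1})=kx_{1}\oplus k[x_{1}]y_{1}$ to exclude nonlinear contributions.

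It then remains to verify, via Criterion \ref{inner_stable}, that each surviving system $W$ (the original operations with the bracket rescaled by $\gamma$) yields an inner automorphism. For this I would take $c_{F}=\gamma^{-1}\,id_{F}:F\rightarrow F_{W}^{\ast}$. It is a $k$-linear bijection commuting with $+$, every $\lambda$ and $p$, and it intertwines the brackets, since $[c_{F}(a),c_{F}(b)]^{\ast}=\gamma[\gamma^{-1}a,\gamma^{-1}b]=\gamma^{-1}[a,b]=c_{F}([a,b])$. Moreover $c_{F}$ is natural: for every morphism $\alpha:D\rightarrow F$ of $\Theta^{0}$, which is in particular $k$-linear, we have $c_{F}\alpha=\gamma^{-1}\alpha=\alpha\gamma^{-1}=\alpha c_{D}$. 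Hence every strongly stable automorphism is inner, so $\mathfrak{S}\subseteq\mathfrak{Y}$ and $\mathfrak{A}/\mathfrak{Y}=\mathfrak{S}/(\mathfrak{S}\cap\mathfrak{Y})$ is trivial.
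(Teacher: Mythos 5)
Your proposal has the same skeleton as the paper's proof: reduce to $\mathfrak{S}$ via $\mathfrak{A}=\mathfrak{YS}$, classify the admissible word systems $W$, and finish with Criterion \ref{inner_stable}; and your closing verification that $c_{F}=\gamma^{-1}id_{F}$ is a natural isomorphism $F\rightarrow F_{W}^{\ast}$ is correct (the paper merely cites \cite{PlotkinZhitAutCat} here). However, two of the reduction steps you rely on are unsound, and they are exactly the places where the paper has to do real work. The first is the linear part. The words $w_{\lambda},w_{+}$ are elements of the free algebras of $\Theta$, not of free $k$-vector spaces, so no ``vector-space reduct'' analysis can yield $w_{\lambda}(x_{1})=\tau (\lambda )x_{1}$: a unary word may involve $p$, and the correct general form (after killing higher-degree terms, which already requires the law $\lambda ^{-1}\ast (\lambda \ast m)=m$ and the infiniteness of $k$) is $w_{\lambda }(m)=\varphi (\lambda )r(m)+\psi (\lambda )p(m)$ with two a priori \emph{distinct} field embeddings $\varphi ,\psi $ acting separately on $\ker p$ and $\mathrm{im}\,p$. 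Moreover the identities only make $\varphi ,\psi $ monomorphisms of $k$; to place them in $\mathrm{Aut}\,k$ (so that the hypothesis $\mathrm{Aut}\,k=\left\{ id_{k}\right\} $ can be applied at all) the paper compares $W$ with the word system of $\Psi ^{-1}$. Your single-automorphism claim silently assumes away this two-embedding phenomenon, which is precisely what the hypothesis is there to kill.

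The second gap is the bracket. Your assertion that ``the derivation axiom eliminates the $p$-terms'' is false: writing $x_{1}\times x_{2}=a[x_{1},x_{2}]+b([p(x_{1}),x_{2}]+[x_{1},p(x_{2})])$, both sides of $p(x_{1}\times x_{2})=p(x_{1})\times x_{2}+x_{1}\times p(x_{2})$ are equal to $(a+b)([r(x_{1}),p(x_{2})]-[r(x_{2}),p(x_{1})])$, so the derivation identity holds identically in $a,b$ and constrains nothing. What cuts the bracket down is the \emph{Jacobi} identity, and it leaves two cases, $b=0$ and $b=-a$; in the second case $x_{1}\times x_{2}=a[r(x_{1}),r(x_{2})]$, which satisfies \emph{every} identity of $\Theta $ (it is a Lie bracket, and $p$ is a derivation of it), so it can only be excluded through Op2): once $w_{p}=p$ is known, one has $p(F\times F)=0$ while $p[F,F]\neq 0$, contradicting $F\cong F_{W}^{\ast }$. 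Your sketch never confronts this degenerate case, which is the case the paper spends the end of its proof eliminating. There is also a circularity in your ordering: you fix $w_{p}$ by appealing to ``the derivation axiom'' before the bracket $w_{[,]}$ that this axiom refers to has been determined; the paper avoids this by first reducing the bracket to the two Jacobi cases, then analyzing $w_{p}$ against both, and only then discarding the degenerate bracket.
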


\begin{proof}
If $\left( F,p\right) =F\left( m_{1},\ldots ,m_{n}\right) \in \mathrm{Ob}%
\Theta ^{0}$ then by Theorems \ref{freeWithProj} and \ref{revFreeWithProj} $%
\left( F,p\right) =\mathcal{FF}^{-1}\left( F,p\right) =\left( L\oplus
V,p_{V}\right) $, where $p=p_{V}$, 
\begin{equation*}
L=\ker p=L\left( r\left( m_{1}\right) ,\ldots ,r\left( m_{n}\right) \right)
\end{equation*}%
is a free Lie algebra with the free generators $r\left( m_{1}\right) ,\ldots
,r\left( m_{n}\right) $, 
\begin{equation*}
V=\mathrm{im}p=\bigoplus\limits_{i=1}^{n}A\left( r\left( m_{1}\right)
,\ldots ,r\left( m_{n}\right) \right) p\left( m_{i}\right)
\end{equation*}%
is a free module with the basis $\left\{ p\left( m_{1}\right) ,\ldots
,p\left( m_{n}\right) \right\} $ over algebra $A\left( r\left( m_{1}\right)
,\ldots ,r\left( m_{n}\right) \right) $, which is a associative algebra with
unit generated by the free generators $r\left( m_{1}\right) ,\ldots ,r\left(
m_{n}\right) $. Hear we must understand that by formula (\ref{Maction}) 
\begin{equation*}
r\left( m_{i_{1}}\right) \ldots r\left( m_{i_{s}}\right) v=\left[ r\left(
m_{i_{1}}\right) ,\left[ \ldots ,\left[ r\left( m_{i_{s}}\right) ,v\right] %
\right] \right] ,
\end{equation*}%
where $v\in V$, $1\leq i_{1},\ldots ,i_{s}\leq n$, $s\in 
%TCIMACRO{\U{2115} }%
%BeginExpansion
\mathbb{N}
%EndExpansion
$, if $s=0$ then $1v=v$. So by linearity we can understand what means $%
f\left( r\left( m_{1}\right) ,\ldots ,r\left( m_{n}\right) \right) v$, for
every associative polynomial from $n$ variables $f\in A\left( x_{1},\ldots
,x_{n}\right) $.

We assume that $\Psi \in \mathfrak{S}$ corresponds to the system of
bijections $\left\{ s_{F}^{\Psi }=s_{F}:F\rightarrow F\mid F\in \mathrm{Ob}%
\Theta ^{0}\right\} $ and to the system of words (\ref{words_list}) and the
words of this system correspond to the operations from $\Omega $ by formula (%
\ref{word_definition}) with $s_{F_{\omega }}=s_{F_{\omega }}^{\Psi }$. $W$
fulfills conditions Op1) and Op2). In particular by condition Op2) all
axioms of the variety $\Theta $ must fulfill for operations defined by
system of words $W$. In this proof we have more convenient to denote by an
other symbols than the symbols of $\Omega $ the operations defined by the
words from $W$ according the (\ref{def_operation}).

$w_{0}=0$ because $w_{0}\in F\left( \varnothing \right) $ and $F\left(
\varnothing \right) =\left\{ 0\right\} $.

We denote by $\lambda \ast $ the operation defined by the word $w_{\lambda
}\in F\left( m\right) $ $\left( \lambda \in k\right) $, where $F\left(
m\right) $ is a $1$-generated object of the category $\Theta ^{0}$. 
\begin{equation*}
\lambda \ast m=w_{\lambda }\left( m\right) =\varphi \left( \lambda \right)
r\left( m\right) +q_{\lambda }\left( r\left( m\right) \right) p\left(
m\right) ,
\end{equation*}%
where $\varphi \left( \lambda \right) \in k$, $q_{\lambda }\in k\left[ x%
\right] $. If $\lambda \neq 0$ then $\lambda ^{-1}\ast \left( \lambda \ast
m\right) =m$ must fulfill. 
\begin{equation*}
\lambda ^{-1}\ast \left( \lambda \ast m\right) =\lambda ^{-1}\ast \left(
\varphi \left( \lambda \right) r\left( m\right) +q_{\lambda }\left( r\left(
m\right) \right) p\left( m\right) \right) =
\end{equation*}%
\begin{equation*}
\varphi \left( \lambda ^{-1}\right) r\left( \varphi \left( \lambda \right)
r\left( m\right) +q_{\lambda }\left( r\left( m\right) \right) p\left(
m\right) \right) +
\end{equation*}%
\begin{equation*}
q_{\lambda ^{-1}}\left( r\left( \varphi \left( \lambda \right) r\left(
m\right) +q_{\lambda }\left( r\left( m\right) \right) p\left( m\right)
\right) \right) p\left( \varphi \left( \lambda \right) r\left( m\right)
+q_{\lambda }\left( r\left( m\right) \right) p\left( m\right) \right) =
\end{equation*}%
\begin{equation*}
\varphi \left( \lambda ^{-1}\right) \varphi \left( \lambda \right) r\left(
m\right) +q_{\lambda ^{-1}}\left( \varphi \left( \lambda \right) r\left(
m\right) \right) \left( q_{\lambda }\left( r\left( m\right) \right) p\left(
m\right) \right) .
\end{equation*}%
On the other side $m=r\left( m\right) +p\left( m\right) $. So $\varphi
\left( \lambda ^{-1}\right) \varphi \left( \lambda \right) =1$ and $\varphi
\left( \lambda \right) \neq 0$. 
\begin{equation*}
q_{\lambda ^{-1}}\left( \varphi \left( \lambda \right) r\left( m\right)
\right) \left( q_{\lambda }\left( r\left( m\right) \right) p\left( m\right)
\right) =s\left( r\left( m\right) \right) p\left( m\right) ,
\end{equation*}%
where $s\in k\left[ x\right] $. If $\deg q_{\lambda ^{-1}}=n$, $\deg
q_{\lambda }=t$ then $\deg s=n+t$, but $\deg s=0$ must hold, so $n=0$, $t=0$%
. Therefore $q_{\lambda }=\psi \left( \lambda \right) \in k$, 
\begin{equation}
\lambda \ast m=\varphi \left( \lambda \right) r\left( m\right) +\psi \left(
\lambda \right) p\left( m\right) .  \label{scalar_mult_word}
\end{equation}%
For $\lambda =0$ it also fulfills with $\varphi \left( 0\right) =\psi \left(
0\right) =0$, because $0\ast m=0$.

$\mu \ast \left( \lambda \ast m\right) =\left( \mu \lambda \right) \ast m$
must fulfill for every $\mu ,\lambda \in k$ so 
\begin{equation*}
\mu \ast \left( \lambda \ast m\right) =\mu \ast \left( \varphi \left(
\lambda \right) r\left( m\right) +\psi \left( \lambda \right) p\left(
m\right) \right) =
\end{equation*}%
\begin{equation*}
\varphi \left( \mu \right) r\left( \varphi \left( \lambda \right) r\left(
m\right) +\psi \left( \lambda \right) p\left( m\right) \right) +\psi \left(
\mu \right) p\left( \varphi \left( \lambda \right) r\left( m\right) +\psi
\left( \lambda \right) p\left( m\right) \right) =
\end{equation*}%
\begin{equation*}
\varphi \left( \mu \right) \varphi \left( \lambda \right) r\left( m\right)
+\psi \left( \mu \right) \psi \left( \lambda \right) p\left( m\right) .
\end{equation*}%
On the other side 
\begin{equation*}
\left( \mu \lambda \right) \ast m=\varphi \left( \mu \lambda \right) r\left(
m\right) +\psi \left( \mu \lambda \right) p\left( m\right) .
\end{equation*}%
Hence 
\begin{equation}
\varphi \left( \mu \right) \varphi \left( \lambda \right) =\varphi \left(
\mu \lambda \right) ,\psi \left( \mu \right) \psi \left( \lambda \right)
=\psi \left( \mu \lambda \right) .  \label{mult_hom}
\end{equation}

We denote by $\perp $ the operation defined by the word $w_{+}\in F\left(
m_{1},m_{2}\right) $, where $F\left( m_{1},m_{2}\right) $ is a $2$-generated
object of the category $\Theta ^{0}$. 
\begin{equation*}
m_{1}\perp m_{2}=l\left( r\left( m_{1}\right) ,r\left( m_{2}\right) \right)
+q_{1}\left( r\left( m_{1}\right) ,r\left( m_{2}\right) \right) p\left(
m_{1}\right) +q_{2}\left( r\left( m_{1}\right) ,r\left( m_{2}\right) \right)
p\left( m_{2}\right) ,
\end{equation*}%
where $l\in L\left( x_{1},x_{2}\right) $, $q_{1},q_{2}\in A\left(
x_{1},x_{2}\right) $. We can write 
\begin{equation*}
l\left( r\left( m_{1}\right) ,r\left( m_{2}\right) \right) =\alpha
_{1}r\left( m_{1}\right) +\alpha _{2}r\left( m_{2}\right) +\widetilde{l}%
\left( r\left( m_{1}\right) ,r\left( m_{2}\right) \right) ,
\end{equation*}%
where $\widetilde{l}\in L^{2}\left( x_{1},x_{2}\right) $, $\alpha
_{1},\alpha _{2}\in k$. And 
\begin{equation*}
q_{i}\left( r\left( m_{1}\right) ,r\left( m_{2}\right) \right) p\left(
m_{i}\right) =\widetilde{q}_{i}\left( r\left( m_{1}\right) ,r\left(
m_{2}\right) \right) p\left( m_{i}\right) +\beta _{i}p\left( m_{i}\right) ,
\end{equation*}%
where $\widetilde{q}_{i}$ is a polynomial from $A\left( x_{1},x_{2}\right) $
such that all its monomials have entries of $x_{1}$ or $x_{2}$, $\beta
_{i}\in k$, $i=1,2$.

$m_{1}\perp 0=m_{1}$ must fulfill. $m_{1}=r\left( m_{1}\right) +p\left(
m_{1}\right) $. But 
\begin{equation*}
m_{1}\perp 0=\alpha _{1}r\left( m_{1}\right) +\widetilde{q}_{1}\left(
r\left( m_{1}\right) ,0\right) p\left( m_{1}\right) +\beta _{1}p\left(
m_{1}\right) .
\end{equation*}%
Therefore $\alpha _{1}=\beta _{1}=1$. From $0\perp m_{2}=m_{2}$ we conclude
that $\alpha _{2}=\beta _{2}=1$.

In $F\left( m\right) $ the $\left( \lambda +\mu \right) \ast m=\left(
\lambda \ast m\right) \perp \left( \mu \ast m\right) $ must fulfill for
every $\mu ,\lambda \in k$.%
\begin{equation*}
\left( \lambda +\mu \right) \ast m=\varphi \left( \lambda +\mu \right)
r\left( m\right) +\psi \left( \lambda +\mu \right) p\left( m\right) .
\end{equation*}%
Also we have that 
\begin{equation*}
\left( \lambda \ast m\right) \perp \left( \mu \ast m\right) =r\left( \lambda
\ast m\right) +r\left( \mu \ast m\right) +\widetilde{l}\left( r\left(
\lambda \ast m\right) ,r\left( \mu \ast m\right) \right) +
\end{equation*}%
\begin{equation*}
\widetilde{q}_{1}\left( r\left( \lambda \ast m\right) ,r\left( \mu \ast
m\right) \right) p\left( \lambda \ast m\right) +p\left( \lambda \ast
m\right) +
\end{equation*}%
\begin{equation*}
\widetilde{q}_{2}\left( r\left( \lambda \ast m\right) ,r\left( \mu \ast
m\right) \right) p\left( \mu \ast m\right) +p\left( \mu \ast m\right) .
\end{equation*}%
\begin{equation*}
r\left( \lambda \ast m\right) =r\left( \varphi \left( \lambda \right)
r\left( m\right) +\psi \left( \lambda \right) p\left( m\right) \right)
=\varphi \left( \lambda \right) r\left( m\right) ,
\end{equation*}%
\begin{equation*}
p\left( \lambda \ast m\right) =p\left( \varphi \left( \lambda \right)
r\left( m\right) +\psi \left( \lambda \right) p\left( m\right) \right) =\psi
\left( \lambda \right) p\left( m\right) .
\end{equation*}%
So%
\begin{equation*}
\left( \lambda \ast m\right) \perp \left( \mu \ast m\right) =\varphi \left(
\lambda \right) r\left( m\right) +\varphi \left( \mu \right) r\left(
m\right) +\widetilde{l}\left( \varphi \left( \lambda \right) r\left(
m\right) ,\varphi \left( \mu \right) r\left( m\right) \right) +
\end{equation*}%
\begin{equation*}
\widetilde{q}_{1}\left( \varphi \left( \lambda \right) r\left( m\right)
,\varphi \left( \mu \right) r\left( m\right) \right) \psi \left( \lambda
\right) p\left( m\right) +\psi \left( \lambda \right) p\left( m\right) +
\end{equation*}%
\begin{equation*}
\widetilde{q}_{2}\left( \varphi \left( \lambda \right) r\left( m\right)
,\varphi \left( \mu \right) r\left( m\right) \right) \psi \left( \mu \right)
p\left( m\right) +\psi \left( \mu \right) p\left( m\right) .
\end{equation*}%
Hence 
\begin{equation}
\varphi \left( \lambda +\mu \right) =\varphi \left( \lambda \right) +\varphi
\left( \mu \right) ,\psi \left( \lambda +\mu \right) =\psi \left( \lambda
\right) +\psi \left( \mu \right) .  \label{add_hom}
\end{equation}%
Therefore $\varphi ,\psi $ are homomorphisms $k\rightarrow k$. 
\begin{equation}
\varphi \left( 1\right) =\psi \left( 1\right) =1,  \label{monomorph}
\end{equation}%
because $1\ast m=m$, so $\ker \varphi =\ker \psi =0$ and $\func{Im}\varphi
\cong \func{Im}\psi \cong k$. Hence $\func{Im}\varphi ,\func{Im}\psi $ are
infinite sets.

$\lambda \ast \left( m_{1}\perp m_{2}\right) =\left( \lambda \ast
m_{1}\right) \perp \left( \lambda \ast m_{2}\right) $ must fulfill for every 
$\lambda \in k$.%
\begin{equation*}
\lambda \ast \left( m_{1}\perp m_{2}\right) =\varphi \left( \lambda \right)
\left( r\left( m_{1}\right) +r\left( m_{2}\right) +\widetilde{l}\left(
r\left( m_{1}\right) ,r\left( m_{2}\right) \right) \right) +
\end{equation*}%
\begin{equation}
\psi \left( \lambda \right) \left( \widetilde{q}_{1}\left( r\left(
m_{1}\right) ,r\left( m_{2}\right) \right) p\left( m_{1}\right) +p\left(
m_{1}\right) \right) +  \label{add1}
\end{equation}%
\begin{equation*}
\psi \left( \lambda \right) \left( \widetilde{q}_{2}\left( r\left(
m_{1}\right) ,r\left( m_{2}\right) \right) p\left( m_{2}\right) +p\left(
m_{2}\right) \right) .
\end{equation*}%
\begin{equation*}
\left( \lambda \ast m_{1}\right) \perp \left( \lambda \ast m_{2}\right)
=r\left( \lambda \ast m_{1}\right) +r\left( \lambda \ast m_{2}\right) +%
\widetilde{l}\left( r\left( \lambda \ast m_{1}\right) ,r\left( \lambda \ast
m_{2}\right) \right) +
\end{equation*}%
\begin{equation*}
\widetilde{q}_{1}\left( r\left( \lambda \ast m_{1}\right) ,r\left( \lambda
\ast m_{2}\right) \right) p\left( \lambda \ast m_{1}\right) +p\left( \lambda
\ast m_{1}\right) +
\end{equation*}%
\begin{equation*}
\widetilde{q}_{2}\left( r\left( \lambda \ast m_{1}\right) ,r\left( \lambda
\ast m_{2}\right) \right) p\left( \lambda \ast m_{2}\right) +p\left( \lambda
\ast m_{2}\right) =
\end{equation*}%
\begin{equation*}
\varphi \left( \lambda \right) r\left( m_{1}\right) +\varphi \left( \lambda
\right) r\left( m_{2}\right) +\widetilde{l}\left( \varphi \left( \lambda
\right) r\left( m_{1}\right) ,\varphi \left( \lambda \right) r\left(
m_{2}\right) \right) +
\end{equation*}%
\begin{equation}
\widetilde{q}_{1}\left( \varphi \left( \lambda \right) r\left( m_{1}\right)
,\varphi \left( \lambda \right) r\left( m_{2}\right) \right) \psi \left(
\lambda \right) p\left( m_{1}\right) +\psi \left( \lambda \right) p\left(
m_{1}\right) +  \label{add2}
\end{equation}%
\begin{equation*}
\widetilde{q}_{2}\left( \varphi \left( \lambda \right) r\left( m_{1}\right)
,\varphi \left( \lambda \right) r\left( m_{2}\right) \right) \psi \left(
\lambda \right) p\left( m_{2}\right) +\psi \left( \lambda \right) p\left(
m_{2}\right) .
\end{equation*}%
We decompose $\widetilde{l}$, $\widetilde{q}_{1}$ and $\widetilde{q}_{2}$ to
the homogeneous components according the degrees (sum of degrees of
variables $x_{1}$ and $x_{2}$) of monomials: $\widetilde{l}=l_{2}+\ldots
+l_{n_{0}}$, $\widetilde{q}_{i}=q_{i,1}+\ldots +q_{i,n_{i}}$, $n_{0}=\deg 
\widetilde{l}$, $n_{i}=\deg \widetilde{q}_{i}$, $i=1,2$. We have by
comparison of (\ref{add1}) and (\ref{add2}) that $\varphi \left( \lambda
\right) l_{j}=\left( \varphi \left( \lambda \right) \right) ^{j}l_{j}$ for $%
2\leq j\leq n_{0}$ and $\psi \left( \lambda \right) q_{i,j}=\psi \left(
\lambda \right) \left( \varphi \left( \lambda \right) \right) ^{j}q_{i,j}$
for $1\leq j\leq n_{i}$, $i=1,2$. We denote $n=\max \left\{
n_{0},n_{1},n_{2}\right\} $. We take $\mu =\varphi \left( \lambda \right)
\in \func{Im}\varphi \setminus \left\{ 0\right\} $ such that $\varphi \left(
\lambda \right) ^{j}\neq 1$ for every $j=1,\ldots ,n$. $\psi \left( \lambda
\right) \neq 0$, so $l_{j}=0$, $q_{i,j}=0$, hence $\widetilde{l}=0$, $%
\widetilde{q}_{1}=\widetilde{q}_{2}=0$ and%
\begin{equation}
m_{1}\perp m_{2}=r\left( m_{1}\right) +r\left( m_{2}\right) +p\left(
m_{1}\right) +p\left( m_{2}\right) =m_{1}+m_{2}.  \label{addition}
\end{equation}

We denote by $W^{\Psi ^{-1}}$ the system of words which fulfills conditions
Op1) and Op2) and corresponds to the automorphism $\Psi ^{-1}$. By $%
w_{\lambda }^{\Psi ^{-1}}\left( m\right) $ ($\lambda \in k$) we denote the
word from $W^{\Psi ^{-1}}$ which corresponds to the operation of the
multiplication by the scalar $\lambda $. We denote by $\lambda \underset{%
\Psi ^{-1}}{\ast }$ the operation defined by word $w_{\lambda }^{\Psi
^{-1}}\left( m\right) $. By (\ref{scalar_mult_word}), (\ref{mult_hom}), (\ref%
{add_hom}) and (\ref{monomorph}) $w_{\lambda }^{\Psi ^{-1}}\left( m\right)
=\rho \left( \lambda \right) r\left( m\right) +\sigma \left( \lambda \right)
p\left( m\right) $, where $\rho ,\sigma $ are monomorphisms of the field $k$%
. By (\ref{addition}) for $w_{+}$ we have only one possibility for every
system of words which fulfills conditions Op1) and Op2): $w_{+}\left(
m_{1},m_{2}\right) =m_{1}+m_{2}$.

By $\left\{ s_{F}^{\Psi ^{-1}}\right\} $ we denote the systems of bijections
corresponding to automorphism $\Psi ^{-1}$. $\Psi ^{-1}\Psi =\Psi \Psi
^{-1}=I$, where $I$ is the identical automorphism . By consideration of the
formula (\ref{action_by_bijections}) we can conclude that to the
automorphism $\Psi ^{-1}\Psi $ corresponds the systems of bijections $%
\left\{ s_{F}^{\Psi ^{-1}}s_{F}^{\Psi }\mid F\in \mathrm{Ob}\Theta
^{0}\right\} $. On the other side to the automorphism $I$ corresponds the
systems of bijections $\left\{ id_{F}\mid F\in \mathrm{Ob}\Theta
^{0}\right\} $. So, we have 
\begin{equation*}
s_{F\left( m\right) }^{\Psi ^{-1}}s_{F\left( m\right) }^{\Psi }\left(
\lambda m\right) =s_{F\left( m\right) }^{I}\left( \lambda m\right) =\lambda
m=\lambda r\left( m\right) +\lambda p\left( m\right) .
\end{equation*}%
On the other side, by using of the formula (\ref{word_definition}),%
\begin{equation*}
s_{F\left( m\right) }^{\Psi ^{-1}}s_{F\left( m\right) }^{\Psi }\left(
\lambda m\right) =s_{F\left( m\right) }^{\Psi ^{-1}}\left( \varphi \left(
\lambda \right) r\left( m\right) +\psi \left( \lambda \right) p\left(
m\right) \right) =
\end{equation*}%
\begin{equation*}
\varphi \left( \lambda \right) \underset{\Psi ^{-1}}{\ast }r\left( m\right)
+\psi \left( \lambda \right) \underset{\Psi ^{-1}}{\ast }p\left( m\right) =
\end{equation*}%
\begin{equation*}
\left( \rho \varphi \left( \lambda \right) rr\left( m\right) +\sigma \varphi
\left( \lambda \right) pr\left( m\right) \right) +\left( \rho \psi \left(
\lambda \right) rp\left( m\right) +\sigma \psi \left( \lambda \right)
pp\left( m\right) \right) =
\end{equation*}%
\begin{equation*}
=\rho \varphi \left( \lambda \right) r\left( m\right) +\sigma \psi \left(
\lambda \right) p\left( m\right) .
\end{equation*}%
Therefore $\rho \varphi =\sigma \psi =id_{k}$. Analogously $\varphi \rho
=\psi \sigma =id_{k}$. Therefore $\varphi ,\psi \in \mathrm{Aut}k$.

Now we consider the case when $\mathrm{Aut}k=\left\{ id_{k}\right\} $. We
denote by $\times $ the operation defined by the word $w_{\left[ ,\right]
}\in F\left( m_{1},m_{2}\right) $.%
\begin{equation*}
m_{1}\times m_{2}=
\end{equation*}%
\begin{equation*}
u\left( r\left( m_{1}\right) ,r\left( m_{2}\right) \right) +t_{1}\left(
r\left( m_{1}\right) ,r\left( m_{2}\right) \right) p\left( m_{1}\right)
+t_{2}\left( r\left( m_{1}\right) ,r\left( m_{2}\right) \right) p\left(
m_{2}\right) ,
\end{equation*}%
where $u\in L\left( x_{1},x_{2}\right) $, $t_{1},t_{2}\in A\left(
x_{1},x_{2}\right) $. $\left( \lambda m_{1}\right) \times m_{2}=\lambda
\left( m_{1}\times m_{2}\right) $ must fulfill for every $\lambda \in k$.%
\begin{equation*}
\lambda \left( m_{1}\times m_{2}\right) =\lambda u\left( r\left(
m_{1}\right) ,r\left( m_{2}\right) \right) +\lambda t_{1}\left( r\left(
m_{1}\right) ,r\left( m_{2}\right) \right) p\left( m_{1}\right) +
\end{equation*}%
\begin{equation}
\lambda t_{2}\left( r\left( m_{1}\right) ,r\left( m_{2}\right) \right)
p\left( m_{2}\right) .  \label{lie1}
\end{equation}%
\begin{equation*}
\left( \lambda m_{1}\right) \times m_{2}=u\left( \lambda r\left(
m_{1}\right) ,r\left( m_{2}\right) \right) +t_{1}\left( \lambda r\left(
m_{1}\right) ,r\left( m_{2}\right) \right) \lambda p\left( m_{1}\right) +
\end{equation*}%
\begin{equation}
t_{2}\left( \lambda r\left( m_{1}\right) ,r\left( m_{2}\right) \right)
p\left( m_{2}\right) .  \label{lie2}
\end{equation}%
We decompose $u=u_{0}+u_{1}+\ldots +u_{s_{0}}$, $t_{i}=t_{i,0}+t_{i,1}+%
\ldots +t_{i,s_{i}}$, $i=1,2$ by homogeneous components according the degree
of $x_{1}$. By comparison of (\ref{lie1}) and (\ref{lie2}) we have that $%
\lambda u_{j}=\lambda ^{j}u_{j}$ for $0\leq j\leq $ $s_{0}$, $\lambda
t_{1,j}=\lambda ^{j+1}t_{1,j}$, for $0\leq j\leq $ $s_{1}$, $\lambda
t_{2,j}=\lambda ^{j}t_{2,j}$, for $0\leq j\leq $ $s_{2}$. We denote $s=\max
\left\{ s_{0},s_{1},s_{2}\right\} $. We take $\lambda $ such that $\lambda
^{j}\neq \lambda $ for $j=0,2,\ldots ,s+1$ and conclude that $u=u_{1}$, $%
t_{1}=t_{1,0}$, $t_{2}=t_{2,1}$.

Also $m_{1}\times \left( \lambda m_{2}\right) =\lambda \left( m_{1}\times
m_{2}\right) $ must fulfill for every $\lambda \in k$. 
\begin{equation*}
m_{1}\times \left( \lambda m_{2}\right) =u_{1}\left( r\left( m_{1}\right)
,\lambda r\left( m_{2}\right) \right) +t_{1,0}\left( r\left( m_{1}\right)
,\lambda r\left( m_{2}\right) \right) p\left( m_{1}\right) +
\end{equation*}%
\begin{equation}
t_{2,1}\left( r\left( m_{1}\right) ,\lambda r\left( m_{2}\right) \right)
\lambda p\left( m_{2}\right) .  \label{lie3}
\end{equation}%
Now we decompose $u_{1}=u_{1,0}+u_{1,1}+\ldots +u_{1,s_{3}}$, $%
t_{1,0}=t_{1,0,0}+t_{1,0,1}+\ldots +t_{1,0,s_{4}}$, $%
t_{2,1}=t_{2,1,0}+t_{2,1,1}+\ldots +t_{2,1,s_{5}}$, by homogeneous
components according the degree of $x_{2}$. And by comparison of (\ref{lie1}%
) and (\ref{lie3}) as above we conclude that $u=u_{1}=u_{1,1}$, $%
t_{1}=t_{1,0}=t_{1,0,1}$, $t_{2}=t_{2,1}=t_{2,1,0}$. Therefore by (\ref%
{Maction})%
\begin{equation*}
m_{1}\times m_{2}=\alpha \left[ r\left( m_{1}\right) ,r\left( m_{2}\right) %
\right] +\beta \left[ r\left( m_{2}\right) ,p\left( m_{1}\right) \right]
+\gamma \left[ r\left( m_{1}\right) ,p\left( m_{2}\right) \right] ,
\end{equation*}%
where $\alpha ,\beta ,\gamma \in k$.

$m_{1}\times m_{2}=-m_{2}\times m_{1}$ must fulfill.%
\begin{equation*}
m_{2}\times m_{1}=\alpha \left[ r\left( m_{2}\right) ,r\left( m_{1}\right) %
\right] +\beta \left[ r\left( m_{1}\right) ,p\left( m_{2}\right) \right]
+\gamma \left[ r\left( m_{2}\right) ,p\left( m_{1}\right) \right] =
\end{equation*}%
\begin{equation*}
-\alpha \left[ r\left( m_{1}\right) ,r\left( m_{2}\right) \right] +\gamma %
\left[ r\left( m_{2}\right) ,p\left( m_{1}\right) \right] +\beta \left[
r\left( m_{1}\right) ,p\left( m_{2}\right) \right] .
\end{equation*}
Therefore $\gamma =-\beta $ and%
\begin{equation*}
m_{1}\times m_{2}=\alpha \left[ r\left( m_{1}\right) ,r\left( m_{2}\right) %
\right] +\beta \left[ r\left( m_{2}\right) ,p\left( m_{1}\right) \right]
-\beta \left[ r\left( m_{1}\right) ,p\left( m_{2}\right) \right] .
\end{equation*}

In the case 1 we assume that $\beta \neq 0$.

The Jacobi identity%
\begin{equation}
J\left( m_{1},m_{2},m_{3}\right) =\left( m_{1}\times m_{2}\right) \times
m_{3}+\left( m_{2}\times m_{3}\right) \times m_{1}+\left( m_{3}\times
m_{1}\right) \times m_{2}=0  \label{Jacobi}
\end{equation}%
must fulfill in $F\left( m_{1},m_{2},m_{3}\right) $.%
\begin{equation*}
\left( m_{1}\times m_{2}\right) \times m_{3}=
\end{equation*}%
\begin{equation*}
\left( \alpha \left[ r\left( m_{1}\right) ,r\left( m_{2}\right) \right]
+\beta \left[ r\left( m_{2}\right) ,p\left( m_{1}\right) \right] -\beta %
\left[ r\left( m_{1}\right) ,p\left( m_{2}\right) \right] \right) \times
m_{3}=
\end{equation*}%
\begin{equation*}
\alpha \left[ \alpha \left[ r\left( m_{1}\right) ,r\left( m_{2}\right) %
\right] ,r\left( m_{3}\right) \right] +
\end{equation*}%
\begin{equation*}
\beta \left[ r\left( m_{3}\right) ,\beta \left[ r\left( m_{2}\right)
,p\left( m_{1}\right) \right] -\beta \left[ r\left( m_{1}\right) ,p\left(
m_{2}\right) \right] \right] -
\end{equation*}%
\begin{equation*}
\beta \left[ \alpha \left[ r\left( m_{1}\right) ,r\left( m_{2}\right) \right]
,p\left( m_{3}\right) \right] =
\end{equation*}%
\begin{equation*}
\alpha ^{2}\left[ \left[ r\left( m_{1}\right) ,r\left( m_{2}\right) \right]
,r\left( m_{3}\right) \right] +\beta ^{2}\left[ r\left( m_{3}\right) ,\left[
r\left( m_{2}\right) ,p\left( m_{1}\right) \right] \right] -
\end{equation*}%
\begin{equation*}
\beta ^{2}\left[ r\left( m_{3}\right) ,\left[ r\left( m_{1}\right) ,p\left(
m_{2}\right) \right] \right] -\beta \alpha \left[ \left[ r\left(
m_{1}\right) ,r\left( m_{2}\right) \right] ,p\left( m_{3}\right) \right] .
\end{equation*}%
\begin{equation*}
F\left( m_{1},m_{2},m_{3}\right) =L\left( r\left( m_{1}\right) ,r\left(
m_{2}\right) ,r\left( m_{3}\right) \right) \oplus
\end{equation*}%
\begin{equation*}
\left( \bigoplus\limits_{i=1}^{3}A\left( r\left( m_{1}\right) ,r\left(
m_{2}\right) ,r\left( m_{3}\right) \right) p\left( m_{i}\right) \right) ,
\end{equation*}%
so $J\left( m_{1},m_{2},m_{3}\right) =\sum\limits_{i=0}^{3}J_{i}$, where 
\begin{equation*}
J_{0}\in L\left( r\left( m_{1}\right) ,r\left( m_{2}\right) ,r\left(
m_{3}\right) \right) ,
\end{equation*}%
\begin{equation*}
J_{i}\in A\left( r\left( m_{1}\right) ,r\left( m_{2}\right) ,r\left(
m_{3}\right) \right) p\left( m_{i}\right) ,
\end{equation*}%
$i=1,2,3$ and must fulfill $J_{i}=0$, $i=0,\ldots ,3$.%
\begin{equation*}
J_{1}=\beta ^{2}\left[ r\left( m_{3}\right) ,\left[ r\left( m_{2}\right)
,p\left( m_{1}\right) \right] \right] -\beta ^{2}\left[ r\left( m_{2}\right)
,\left[ r\left( m_{3}\right) ,p\left( m_{1}\right) \right] \right] -
\end{equation*}%
\begin{equation*}
\beta \alpha \left[ \left[ r\left( m_{2}\right) ,r\left( m_{3}\right) \right]
,p\left( m_{1}\right) \right] =
\end{equation*}%
\begin{equation*}
\beta ^{2}\left[ r\left( m_{3}\right) ,\left[ r\left( m_{2}\right) ,p\left(
m_{1}\right) \right] \right] -\beta ^{2}\left[ r\left( m_{2}\right) ,\left[
r\left( m_{3}\right) ,p\left( m_{1}\right) \right] \right] -
\end{equation*}%
\begin{equation*}
\beta \alpha \left[ r\left( m_{2}\right) ,\left[ r\left( m_{3}\right)
,p\left( m_{1}\right) \right] \right] +\beta \alpha \left[ r\left(
m_{3}\right) ,\left[ r\left( m_{2}\right) ,p\left( m_{1}\right) \right] %
\right]
\end{equation*}%
by (\ref{Maction}) and definition of representation of Lie algebra. So $%
\beta ^{2}+\beta \alpha =0$ must fulfill and we have that $\beta =-\alpha $, 
$\alpha \neq 0$. It is easy to check that $\beta =-\alpha $ enough for (\ref%
{Jacobi}). Therefore 
\begin{equation}
m_{1}\times m_{2}=\alpha \left( \left[ r\left( m_{1}\right) ,r\left(
m_{2}\right) \right] +\left[ r\left( m_{1}\right) ,p\left( m_{2}\right) %
\right] -\left[ r\left( m_{2}\right) ,p\left( m_{1}\right) \right] \right)
=\alpha \left[ m_{1},m_{2}\right]  \label{lie_case1}
\end{equation}%
by (\ref{new_brackets}) and (\ref{Maction}).

In the case 2, if $\beta =0$ we have that 
\begin{equation}
m_{1}\times m_{2}=\alpha \left[ r\left( m_{1}\right) ,r\left( m_{2}\right) %
\right] .  \label{lie_case2}
\end{equation}
If $\alpha =0$, then $F\left( m_{1},m_{2}\right) \times F\left(
m_{1},m_{2}\right) =\left\{ 0\right\} $, but $\left[ F\left(
m_{1},m_{2}\right) ,F\left( m_{1},m_{2}\right) \right] \neq \left\{
0\right\} $. By condition Op2) $F\left( m_{1},m_{2}\right) \cong \left(
F\left( m_{1},m_{2}\right) \right) _{W}^{\ast }$. From this contradiction we
conclude that hear also $\alpha \neq 0$.

We denote by $\mathfrak{p}$ the operation defined by the word $w_{p}\in
F\left( m\right) $. 
\begin{equation*}
\mathfrak{p}\left( m\right) =\delta r\left( m\right) +q_{\mathfrak{p}}\left(
r\left( m\right) \right) p\left( m\right) ,
\end{equation*}%
where $\delta \in k$, $q_{\mathfrak{p}}\in k\left[ x\right] $. $\mathfrak{p}%
\left( \lambda \ast m\right) =\lambda \ast \mathfrak{p}\left( m\right) $
must fulfill for every $\lambda \in k$. 
\begin{equation}
\lambda \ast \mathfrak{p}\left( m\right) =\lambda \delta r\left( m\right)
+\lambda q_{\mathfrak{p}}\left( r\left( m\right) \right) p\left( m\right) .
\label{proj1}
\end{equation}%
\begin{equation}
\mathfrak{p}\left( \lambda \ast m\right) =\delta r\left( \lambda \ast
m\right) +q_{\mathfrak{p}}\left( r\left( \lambda \ast m\right) \right)
p\left( \lambda \ast m\right) =\delta \lambda r\left( m\right) +q_{\mathfrak{%
p}}\left( \lambda r\left( m\right) \right) \lambda p\left( m\right) .
\label{proj2}
\end{equation}%
As above we decompose $q_{\mathfrak{p}}$ by homogeneous components according
the degree of $x$ and conclude as above by comparison of (\ref{proj1}) and (%
\ref{proj2}) that $\deg q_{\mathfrak{p}}=0$ and 
\begin{equation*}
\mathfrak{p}\left( m\right) =\delta r\left( m\right) +\varepsilon p\left(
m\right)
\end{equation*}%
where $\varepsilon \in k$.

$\mathfrak{p}\left( \mathfrak{p}\left( m\right) \right) =\mathfrak{p}\left(
m\right) $ must fulfill in $F\left( m\right) $. 
\begin{equation*}
\mathfrak{p}\left( \mathfrak{p}\left( m\right) \right) =\delta r\left(
\delta r\left( m\right) +\varepsilon p\left( m\right) \right) +\varepsilon
p\left( \delta r\left( m\right) +\varepsilon p\left( m\right) \right)
=\delta ^{2}r\left( m\right) +\varepsilon ^{2}p\left( m\right) .
\end{equation*}%
Therefore $\delta ^{2}=\delta $, $\varepsilon ^{2}=\varepsilon $.

If $\delta =\varepsilon =1$ then $\mathfrak{p}\left( m\right) =r\left(
m\right) +p\left( m\right) =m$ and $\mathfrak{p}\left( \left( F\left(
m\right) \right) _{W}^{\ast }\right) =\left( F\left( m\right) \right)
_{W}^{\ast }$ but $p\left( F\left( m\right) \right) \neq \left( F\left(
m\right) \right) $ contrary to $F\left( m\right) \cong \left( F\left(
m\right) \right) _{W}^{\ast }$. So it is impossible that $\delta
=\varepsilon =1$.

If $\delta =\varepsilon =0$, then $\mathfrak{p}\left( m\right) =0$ and $%
\mathfrak{p}\left( \left( F\left( m\right) \right) _{W}^{\ast }\right) =0$
but $p\left( F\left( m\right) \right) \neq 0$ contrary to $F\left( m\right)
\cong \left( F\left( m\right) \right) _{W}^{\ast }$. As above we conclude
that $\delta =\varepsilon =0$ is impossible.

If $\delta =1$, $\varepsilon =0$. Then $\mathfrak{p}\left( m\right) =r\left(
m\right) $, i.e. $\mathfrak{p}=r$. $\mathfrak{p}$ must be a derivation of $%
\left( F\left( m\right) \right) _{W}^{\ast }$. In the case 2, by (\ref%
{lie_case2}), we have that 
\begin{equation*}
\mathfrak{p}\left( m_{1}\times m_{2}\right) =r\left( \alpha \left[ r\left(
m_{1}\right) ,r\left( m_{2}\right) \right] \right) =\alpha \left[ r\left(
m_{1}\right) ,r\left( m_{2}\right) \right] ,
\end{equation*}%
\begin{equation*}
\mathfrak{p}\left( m_{1}\right) \times m_{2}+m_{1}\times \mathfrak{p}\left(
m_{2}\right) =r\left( m_{1}\right) \times m_{2}+m_{1}\times r\left(
m_{2}\right) =
\end{equation*}%
\begin{equation*}
\alpha \left[ rr\left( m_{1}\right) ,r\left( m_{2}\right) \right] +\alpha %
\left[ r\left( m_{1}\right) ,rr\left( m_{2}\right) \right] =2\alpha \left[
r\left( m_{1}\right) ,r\left( m_{2}\right) \right] .
\end{equation*}%
$char\left( k\right) =0$, so $\mathfrak{p}$ is not a derivation. In the case
1, by (\ref{lie_case1}), we have that 
\begin{equation*}
\mathfrak{p}\left( m_{1}\times m_{2}\right) =r\left( \alpha \left( \left[
r\left( m_{1}\right) ,r\left( m_{2}\right) \right] +\left[ r\left(
m_{1}\right) ,p\left( m_{2}\right) \right] -\left[ r\left( m_{2}\right)
,p\left( m_{1}\right) \right] \right) \right) =
\end{equation*}%
\begin{equation*}
\alpha \left[ r\left( m_{1}\right) ,r\left( m_{2}\right) \right] ,
\end{equation*}%
\begin{equation*}
\mathfrak{p}\left( m_{1}\right) \times m_{2}+m_{1}\times \mathfrak{p}\left(
m_{2}\right) =r\left( m_{1}\right) \times m_{2}+m_{1}\times r\left(
m_{2}\right) =
\end{equation*}%
\begin{equation*}
\alpha \left( \left[ r\left( r\left( m_{1}\right) \right) ,r\left(
m_{2}\right) \right] +\left[ r\left( r\left( m_{1}\right) \right) ,p\left(
m_{2}\right) \right] -\left[ r\left( m_{2}\right) ,p\left( r\left(
m_{1}\right) \right) \right] \right) +
\end{equation*}%
\begin{equation*}
\alpha \left( \left[ r\left( m_{1}\right) ,r\left( r\left( m_{2}\right)
\right) \right] +\left[ r\left( m_{1}\right) ,p\left( r\left( m_{2}\right)
\right) \right] -\left[ r\left( r\left( m_{2}\right) \right) ,p\left(
m_{1}\right) \right] \right) =
\end{equation*}%
\begin{equation*}
\alpha \left( \left[ r\left( m_{1}\right) ,r\left( m_{2}\right) \right] +%
\left[ r\left( m_{1}\right) ,p\left( m_{2}\right) \right] \right) +\alpha
\left( \left[ r\left( m_{1}\right) ,r\left( m_{2}\right) \right] -\left[
r\left( m_{2}\right) ,p\left( m_{1}\right) \right] \right) =
\end{equation*}%
\begin{equation*}
\alpha \left( 2\left[ r\left( m_{1}\right) ,r\left( m_{2}\right) \right] +%
\left[ r\left( m_{1}\right) ,p\left( m_{2}\right) \right] -\left[ r\left(
m_{2}\right) ,p\left( m_{1}\right) \right] \right) .
\end{equation*}%
In this case $\mathfrak{p}$ also is not a derivation.

Therefore we have only one possibility: $\delta =0$, $\varepsilon =1$. It
means $\mathfrak{p}\left( m\right) =p\left( m\right) $, i.e., $\mathfrak{p}%
=p $.

And in the case 2, by (\ref{lie_case2}), we have that 
\begin{equation*}
p\left( F\left( m_{1},m_{2}\right) \times F\left( m_{1},m_{2}\right) \right)
=0
\end{equation*}%
but 
\begin{equation*}
p\left[ r\left( m_{1}\right) ,p\left( m_{2}\right) \right] =\left[ r\left(
m_{1}\right) ,p\left( m_{2}\right) \right] \neq 0,
\end{equation*}%
so 
\begin{equation*}
p\left[ F\left( m_{1},m_{2}\right) ,F\left( m_{1},m_{2}\right) \right] \neq 0
\end{equation*}%
contrary to $F\left( m_{1},m_{2}\right) \cong \left( F\left(
m_{1},m_{2}\right) \right) _{W}^{\ast }$. Therefore the case 2 is impossible.

Hence 
\begin{equation*}
m_{1}\times m_{2}=\alpha \left( \left[ r\left( m_{1}\right) ,r\left(
m_{2}\right) \right] +\left[ r\left( m_{1}\right) ,p\left( m_{2}\right) %
\right] -\left[ r\left( m_{2}\right) ,p\left( m_{1}\right) \right] \right)
=\alpha \left[ m_{1},m_{2}\right] ,
\end{equation*}%
where $\alpha \neq 0$.

From this fact, as in \cite[end of the subsection 2.5]{PlotkinZhitAutCat},
we conclude that $\Psi \in \mathfrak{Y}$. So $\mathfrak{S=S\cap Y}$ and $%
\mathfrak{A/Y=}\left\{ 1\right\} $.
\end{proof}

\section{The main theorem.}

\setcounter{equation}{0}

\begin{theorem}
If $\mathrm{Aut}k=\left\{ id_{k}\right\} $ then automorphic equivalence of
representations of Lie algebras coincides with the geometric equivalence.
\end{theorem}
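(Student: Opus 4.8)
The plan is to prove both implications of the ``coincides'' statement; the content lies in showing that automorphic equivalence forces geometric equivalence. The reverse implication is immediate: if $H_{1},H_{2}\in \Xi $ are geometrically equivalent then $Cl_{H_{1}}\left( W\right) =Cl_{H_{2}}\left( W\right) $ for every $W\in \mathrm{Ob}\Xi ^{0}$, so taking $\Phi =id_{\Xi ^{0}}$ and $\alpha \left( \Phi \right) _{W}=id$ satisfies conditions 1--3 of the definition of automorphic equivalence (condition 3 holds because both sides reduce to $\beta _{W_{1},W_{2}}\left( T_{1},T_{2}\right) $).

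For the forward direction, suppose $H_{1}=\left( L_{1},V_{1}\right) $ and $H_{2}=\left( L_{2},V_{2}\right) $ are automorphically equivalent. First I would pass to the $1$-sorted variety $\Theta $ through the category isomorphism $\mathcal{F}$: by Theorem \ref{automorphEquiv} the algebras $N_{1}=\mathcal{F}\left( H_{1}\right) $ and $N_{2}=\mathcal{F}\left( H_{2}\right) $ are automorphically equivalent in $\Theta $, and this equivalence is witnessed by some automorphism $\Psi \in \mathrm{Aut}\Theta ^{0}=\mathfrak{A}$. The hypothesis $\mathrm{Aut}k=\left\{ id_{k}\right\} $ now enters through Theorem \ref{group}, which gives $\mathfrak{A/Y}=\left\{ 1\right\} $, i.e.\ $\mathfrak{A}=\mathfrak{Y}$ and every automorphism of $\Theta ^{0}$ is inner. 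Hence $\Psi \in \mathfrak{Y}$, and by the result of \cite{PlotkinSame} recalled in Section \ref{1-sorted} --- automorphic equivalence provided by an inner automorphism entails geometric equivalence --- the objects $N_{1}$ and $N_{2}$ are geometrically equivalent in $\Theta $, so $Cl_{N_{1}}\left( F\right) =Cl_{N_{2}}\left( F\right) $ for every $F\in \mathrm{Ob}\Theta ^{0}$.

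Next I would transport this equality back to $\Xi $. By Proposition \ref{ClCorresp} the assignment $\left( T_{1},T_{2}\right) \mapsto T_{1}\oplus T_{2}$ is a bijection $Cl_{H_{i}}\left( W\right) \rightarrow Cl_{N_{i}}\left( \mathcal{F}\left( W\right) \right) $ whose description is independent of $i$; since $\mathcal{F}$ carries $\Xi ^{\prime }$ onto $\mathrm{Ob}\Theta ^{0}$, the congruence $\left( T_{1},T_{2}\right) $ lies in $Cl_{H_{1}}\left( W\right) $ iff $T_{1}\oplus T_{2}\in Cl_{N_{1}}\left( \mathcal{F}\left( W\right) \right) =Cl_{N_{2}}\left( \mathcal{F}\left( W\right) \right) $ iff $\left( T_{1},T_{2}\right) \in Cl_{H_{2}}\left( W\right) $. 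Thus $Cl_{H_{1}}\left( W\right) =Cl_{H_{2}}\left( W\right) $ for every $W\in \Xi ^{\prime }$, i.e.\ for every free representation with $\left\vert X\right\vert =\left\vert Y\right\vert $.

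The main obstacle is that geometric equivalence in $\Xi $ demands agreement of the closed sets on all of $\mathrm{Ob}\Xi ^{0}$, including the representations $W\left( X,Y\right) $ with $\left\vert X\right\vert \neq \left\vert Y\right\vert $, which fall outside $\Xi ^{\prime }$ and have no counterpart in $\Theta ^{0}$. Here I would appeal to Theorem \ref{cl}: given an arbitrary $W_{1}=W\left( X_{1},Y_{1}\right) $, enlarge it to $W_{2}=W\left( X_{2},Y_{2}\right) \in \Xi ^{\prime }$ with $X_{1}\subseteq X_{2}$, $Y_{1}\subseteq Y_{2}$ and $\left\vert X_{2}\right\vert =\left\vert Y_{2}\right\vert =\max \left\{ \left\vert X_{1}\right\vert ,\left\vert Y_{1}\right\vert \right\} $. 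Since $W_{2}\in \Xi ^{\prime }$ the previous paragraph gives $Cl_{H_{1}}\left( W_{2}\right) =Cl_{H_{2}}\left( W_{2}\right) $, so Theorem \ref{cl} yields $Cl_{H_{1}}\left( W_{1}\right) =Cl_{H_{2}}\left( W_{1}\right) $. As $W_{1}$ was arbitrary, the closed sets of $H_{1}$ and $H_{2}$ coincide on all of $\mathrm{Ob}\Xi ^{0}$; equivalently the two closure operators agree, which is exactly geometric equivalence of $H_{1}$ and $H_{2}$, completing the proof.
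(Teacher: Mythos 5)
Your proof is correct and follows essentially the same route as the paper's: pass to $\Theta$ via Theorem \ref{automorphEquiv}, use Theorem \ref{group} together with the Plotkin result on inner automorphisms to get geometric equivalence of $N_{1},N_{2}$, transfer back along the bijections of Proposition \ref{ClCorresp} for objects of $\Xi ^{\prime }$, and finish with Theorem \ref{cl} to cover the free representations with $\left\vert X\right\vert \neq \left\vert Y\right\vert $. The only addition is your explicit check of the trivial converse (geometric $\Rightarrow$ automorphic via the identity automorphism), which the paper leaves unstated.
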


\begin{proof}
We assume that $H_{1}=\left( L_{1},V_{1}\right) ,H_{2}=\left(
L_{2},V_{2}\right) \in \Xi $ are automorphically equivalent. By Theorem \ref%
{automorphEquiv} we have that $N_{1}=\mathcal{F}\left( H_{1}\right) ,N_{2}=%
\mathcal{F}\left( H_{2}\right) $ are automorphically equivalent. By \cite[%
Proposition 9]{PlotkinSame} and Theorem \ref{group} we can conclude from
this fact that $N_{1},N_{2}$ are geometrically equivalent. It means that $%
Cl_{N_{1}}\left( F\right) =Cl_{N_{2}}\left( F\right) $ for every $F\in 
\mathrm{Ob}\Theta ^{0}$.

We will consider the arbitrary $W_{1}=\left( L\left( X_{1}\right) ,A\left(
X_{1}\right) Y_{1}\right) \in \mathrm{Ob}\Xi ^{0}$. There are $X_{2}\subset
X^{0}$, $Y_{2}\subset Y^{0}$ such that $X_{1}\subseteq X_{2}$, $%
Y_{1}\subseteq Y_{2}$ and $W_{2}=\left( L\left( X_{2}\right) ,A\left(
X_{2}\right) Y_{2}\right) \in \Xi ^{\prime }$. By Theorem \ref%
{revFreeWithProj} there exists $F\in \mathrm{Ob}\Theta ^{0}$ such that $F=%
\mathcal{F}\left( W_{2}\right) $. By Proposition \ref{ClCorresp} we can
conclude from $Cl_{N_{1}}\left( F\right) =Cl_{N_{2}}\left( F\right) $ that $%
Cl_{H_{1}}\left( W_{2}\right) =Cl_{H_{2}}\left( W_{2}\right) $. And by
Theorem \ref{cl} we can conclude that $Cl_{H_{1}}\left( W_{1}\right)
=Cl_{H_{2}}\left( W_{1}\right) $. So $H_{1}$ and $H_{2}$ are geometrically
equivalent.
\end{proof}

\section{Acknowledgements.}

We acknowledge the support by FAPESP - Funda\c{c}\~{a}o de Amparo \`{a}
Pesquisa do Estado de S\~{a}o Paulo (Foundation for Support Research of the
State S\~{a}o Paulo), projects No. 2010/50948-2 and No. 2010/50347-9.

\end{document}